\documentclass{amsart}

\usepackage{amsmath,amssymb,amsthm}
\usepackage{booktabs}
\usepackage{float}
\usepackage[colorlinks=true,linkcolor=blue,citecolor=blue,urlcolor=blue]{hyperref}
\usepackage[capitalise,noabbrev,nameinlink]{cleveref}

\providecommand{\F}{\mathbb F}
\providecommand{\Z}{\mathbb Z}

\providecommand{\PP}{\mathbb P}

\providecommand{\Aut}{\operatorname{Aut}}
\providecommand{\PAut}{\operatorname{PAut}}

\providecommand{\Mon}{\operatorname{Mon}}

\providecommand{\Supp}{\operatorname{Supp}}
\providecommand{\Hull}{\operatorname{Hull}}
\providecommand{\rank}{\operatorname{rank}}
\providecommand{\diag}{\operatorname{diag}}
\providecommand{\wt}{\operatorname{wt}}

\providecommand{\Ram}{\operatorname{Ram}}

\providecommand{\XX}{\mathcal X}
\providecommand{\YY}{\mathcal Y}
\providecommand{\CC}{\mathcal C}
\providecommand{\HH}{\mathcal H}

\newcommand{\LCD}{\mathrm{LCD}}
\newcommand{\SSA}{\mathrm{SSA}}
\newcommand{\PCE}{\mathrm{PCE}}
\newcommand{\LEP}{\mathrm{LEP}}

\theoremstyle{plain}
\newtheorem{thm}{Theorem}[section]
\newtheorem{prop}[thm]{Proposition}
\newtheorem{lem}[thm]{Lemma}
\newtheorem{cor}[thm]{Corollary}

\theoremstyle{definition}
\newtheorem{defn}[thm]{Definition}

\newtheorem{rem}[thm]{Remark}

\crefname{thm}{Thm.}{Thms.}
\Crefname{thm}{Thm.}{Thms.}
\crefname{prop}{Prop.}{Props.}
\Crefname{prop}{Prop.}{Props.}
\crefname{lem}{Lem.}{Lems.}
\Crefname{lem}{Lem.}{Lems.}
\crefname{cor}{Cor.}{Cors.}
\Crefname{cor}{Cor.}{Cors.}
\crefname{equation}{Eq.}{Eqs.}
\Crefname{equation}{Eq.}{Eqs.}
\crefname{defn}{Def.}{Definitions}
\Crefname{defn}{Def.}{Definitions}
\crefname{exa}{Exa.}{Examples}
\Crefname{exa}{Exa.}{Examples}
\crefname{rem}{Rem.}{Remarks}
\Crefname{rem}{Rem.}{Remarks}
\crefname{prob}{Prob.}{Problems}
\Crefname{prob}{Prob.}{Problems}

\providecommand{\Sym}{\operatorname{Sym}}
\providecommand{\Can}{\operatorname{Can}}
\providecommand{\Stab}{\operatorname{Stab}}
\providecommand{\Nmon}{N_{\mathrm{mon}}}
\providecommand{\Maut}{\operatorname{MAut}}
\providecommand{\Per}{\operatorname{Per}}
\providecommand{\sh}{\operatorname{Sh}}
\providecommand{\nul}{\operatorname{null}}
\providecommand{\im}{\operatorname{im}}
\providecommand{\one}{\mathbf 1}
\providecommand{\PGL}{\operatorname{PGL}}

\usepackage[margin=1.15in]{geometry}

\begin{document}

\title{Code Equivalence and Automorphism Groups of Weighted Superelliptic Codes}

\author{L. Beshaj}
\address{Department of Mathematical Sciences, United States Military Academy, West Point, NY 10996}

\author{T. Shaska}
\address{Department of Computer Science \& Engineering, Oakland University, Rochester, MI 48309}

\subjclass[2020]{Primary 94B27, 94A60, 14G50; Secondary 94B05, 14H37, 14H51, 11T71}

\keywords{code equivalence, support splitting algorithm, algebraic geometry codes, Cartesian codes, hulls, superelliptic curves, permutation automorphisms, linear equivalence problem}

\begin{abstract}
We study permutation and monomial equivalence for one-point codes on weighted superelliptic curves.  On strata where $H_m\simeq \Z/n\Z\times\Z/m\Z$ acts regularly on the evaluation set, we prove that these codes are Cartesian character codes and determine their primal and dual minimum distances.  We compute the projective monomial normalizer of the regular action and classify the compatible permutation and monomial equivalences through the induced actions on the defining sets.  When $d > n$ and $m \geq 2$, the common permutation automorphism group of the full filtration is exactly $H_m$, and the regular action can be recovered from its two lowest nonconstant members.  We determine the full permutation automorphism group in closed form at the top of the filtration and give a field-independent finite certificate for $\PAut ( \CC_s ) = H_m$ in the remaining primitive cases treated here.  The hull dimension is bounded by the genus on every regular stratum, apart from the type $(3,3,3)$.  Explicit examples show that length, dimension, and hull dimension do not determine monomial equivalence, and that the hull profile together with the regular automorphism subgroup does not determine the curve up to isomorphism.  As an application, we determine the corresponding hull-based invariants and punctured signatures used in support-splitting methods.
\end{abstract}

\maketitle

\section{Introduction}\label{sec-intro}

A linear code is a subspace $C \subseteq \F_q^N$, whose elements are written as row vectors; for an $N \times N$ matrix $M$ put $CM = \{ cM : c \in C \}$.  A monomial matrix is the product of a permutation matrix and an invertible diagonal matrix.  Two linear codes $C, C' \subseteq \F_q^N$ are permutation equivalent if $C' = CP$ for a permutation matrix $P$, and linearly equivalent if $C' = CM$ for a monomial matrix $M$.  Deciding the first is the permutation code equivalence problem ($\PCE$); deciding the second is the linear equivalence problem ($\LEP$).  Both are classical, see \cite{mac-sloane}.  The first is at least as hard as graph isomorphism and is not NP-complete unless the polynomial hierarchy collapses, see \cite{petrank-roth}.  The second underlies the signature scheme LESS and its variants, see \cite{less,persichetti-santini}.

Sendrier's support splitting algorithm solves $\PCE$ by comparing, coordinate by coordinate, the weight enumerator of the hull $\Hull (C) = C \cap C^\perp$.  Its cost is polynomial in $N$ and exponential in $h = \dim \Hull (C)$, see \cite{se-ssa}.  When $h = 0$, $\PCE$ reduces in polynomial time to weighted graph isomorphism, see \cite{ba-ot-sa}.  A random code has hull of bounded dimension, see \cite{se-hull}, so for random codes the exponent is bounded.  For a structured family of codes, $h$ must be computed for each member.

This paper computes $h$ for the one-point codes $\CC_s = C_L ( D, s D_\infty )$ on weighted superelliptic curves $y^n = f(x)$.  These are the weighted algebraic geometry codes of \cite{qwag}, cut out by weighted forms of degree $s$ on the weighted plane model of the curve; their hulls are computed in \cite{mezinaj-shaska-hulls}.  The exact formula of \cite{mezinaj-shaska-hulls} writes $\dim \Hull ( \CC_s )$ as a lattice count $\Phi ( \nu (s) )$ plus an excess $\varepsilon ( G_s, A_s )$, and shows that the excess vanishes on an explicit interval of degrees, the Riemann--Roch window.  On the window $h$ is an affine function of $s$ with slope $\pm c$, $c = \gcd (n, \deg f)$, depending on the curve only through its number of affine rational points.  Beyond an explicit threshold $s^+$ the dual is zero and $h = 0$.  On the strata where a subgroup $H_m \simeq \Z / n \Z \times \Z / m \Z$ of the automorphism group acts regularly on the evaluation set, the window is empty, the codes are ideals in $\F_q [ H_m ]$, and $h$ is the number of characters in the defining set $T_s$ whose inverse is not in $T_s$.  Each of these formulas can be evaluated.

The same automorphisms are permutation automorphisms of the codes.  The support splitting algorithm separates coordinates only up to the orbits of the permutation automorphism group, see \cite{se-ssa}, so $H_m$ is a second parameter of the algorithm; on a regular stratum its signatures are constant.  There the monomial matrices compatible with the regular action form the group $\F_q^* ( H_m \rtimes \Aut ( H_m ) ) \widehat H_m$, and the compatible permutation and monomial equivalences are classified by the action of $\Aut ( H_m )$ and of $\widehat H_m \rtimes \Aut ( H_m )$ on the defining set $T_s$.  The compatible class of $\CC_s$ depends only on $(n, d, m, s)$.  The regular action also makes the codes Cartesian: both minimum distances are explicit, a single puncture lowers a nonzero hull by exactly one dimension, so the initial signature costs $q^{h-1}$ words rather than $q^h$, and the two codes $\CC_{n/c}$, $\CC_{d/c}$ determine the regular action while the full filtration has permutation automorphism group exactly $H_m$.  On every regular stratum the exponent is bounded by $g_\XX$, with a single exception of type $(3, 3, 3)$, whereas on the totally split locus it grows linearly in $N$.  Two codes over $\F_{13}$ with equal length, dimension and hull dimension are separated by their distances, hence are not monomially equivalent; for the codes in the tables the full permutation automorphism groups are computed over the whole degree range; they are $H_m$ extended by the stabilizer of the defining set except at three explicit types of degrees, where the group is larger and is determined, and at every tabulated degree with trivial stabilizer the equality $\PAut ( \CC_s ) = H_m$ is certified by a finite computation on the character group that does not depend on the field.  Two non-isomorphic curves over $\F_{19}$ with the same hull profile $s \mapsto \dim \Hull ( \CC_s )$ and the same automorphism subgroup show that these invariants do not determine the curve up to isomorphism.

The paper \cite{mezinaj-shaska-hulls} proves the hull formulas and the character description of these codes; everything in it that is used here is recalled in \cref{sec-preliminaries} without proof.  The present paper is the equivalence and automorphism theory built on that input: the Cartesian description and the exact distances (\cref{sec-cartesian}), the constancy of the signatures and the exact effect of punctures (\cref{sec-signatures}), the monomial normalizer and the classification of compatible permutation and monomial equivalence (\cref{sec-compatible-equivalence}), the automorphism groups of the filtration and the recovery of the regular action from two of its members (\cref{sec-rigidity}), and the determination of the full permutation automorphism groups together with the example of two non-isomorphic curves with the same invariants (\cref{sec-automorphisms}).

The conclusions concern $\PCE$.  Hull dimension is not invariant under monomial equivalence: for $q > 3$ every linear code is monomially equivalent to a linear complementary dual ($\LCD$) code, one with $\Hull (C) = 0$, see \cite{ca-me-ta-qi-pe}.  The permutation and diagonal parts of a monomial equivalence must therefore be treated separately, and nothing here asserts the security of a signature scheme.

\section{Preliminaries}\label{sec-preliminaries}

This section fixes the code-equivalence notation and recalls the results of \cite{mezinaj-shaska-hulls} used below.  Results taken directly from that paper are stated with precise citations and are not reproved; their immediate consequences are retained with proofs.

\subsection{Code equivalence and support splitting}\label{sec-equivalence}

Let $C \subseteq \F_q^N$ be a linear code, equipped with the Euclidean inner product $\langle u, v \rangle = u v^t$.  Since every permutation matrix is monomial, permutation equivalence is a special case of linear equivalence.  Permutation matrices preserve the Euclidean inner product, whereas diagonal scalings need not.

\begin{defn}\label{def-ssa-exponent}
For a linear code $C \subseteq \F_q^N$ put
\[
e_{\SSA} (C) = \dim \Hull (C) .
\]
We call $e_{\SSA} (C)$ the support-splitting exponent of $C$.
\end{defn}

The number $q^{e_{\SSA} (C)}$ is the exponential factor in the cost of the hull weight-enumerator signature.  It is not a running time: the polynomial factor and the cost of any improved enumeration method are not included, and for a code with a transitive permutation automorphism group the initial signature itself is the enumerator of a code of dimension $e_{\SSA} (C) - 1$.

Three facts about this exponent are used throughout.  If $C' = CP$ with $P$ a permutation matrix, then $( CP )^\perp = C^\perp P$ and $\Hull ( CP ) = \Hull (C) P$, so $e_{\SSA}$ is a permutation invariant.  The hull signature of the support splitting algorithm is computed in $q^{e_{\SSA} (C)} \operatorname{poly} (N)$ field operations, see \cite{se-ssa}.  If $e_{\SSA} (C) = 0$, the instance of $\PCE$ reduces in polynomial time to weighted graph isomorphism, see \cite{ba-ot-sa}.

\subsection{Weighted superelliptic evaluation codes}\label{sec-family}

We use the notation of \cite{mezinaj-shaska-hulls}.  The curve $\XX$ is taken with its weighted plane model in $\PP^2_{(1, n/c, d/c)}$, $c = \gcd (n, d)$, and $\CC_s$ is the code of weighted forms of degree $s$ evaluated on $\YY$, as in \cite{qwag}; by \cite[Lemma~2.5]{mezinaj-shaska-hulls} this is $C_L ( D, s D_\infty )$.  Unless another hypothesis is explicitly stated, we impose the following standing hypotheses throughout.

\begin{enumerate}
\item[(H1)] $\XX$ is the smooth projective model of
\[
\XX : \quad y^n = f(x)
\]
over $\F_q$, where $n \geq 2$, $p \nmid n$, and $f \in \F_q [x]$ is separable of degree $d \geq 3$ and has no root in $\F_q$.
\item[(H2)] $n \mid q-1$.
\item[(H3)] $\YY$ is the set of all affine $\F_q$-rational points of $\XX$, and $\YY \neq \varnothing$.
\end{enumerate}

Put $c = \gcd (n, d)$.  The reduced divisor $D_\infty$ above $x = \infty$ has degree $c$, and
\[
g_\XX = \frac{(n-1)(d-1) + 1 - c}{2} \; \geq \; 1 .
\]
Let $D = \sum_{P \in \YY} P$ and $N = | \YY |$.  Since $f$ has no $\F_q$-rational root, every affine fibre contains either $n$ rational points or none.  Hence, if
\[
\Delta = \{ a \in \F_q : f(a) \notin ( \F_q^* )^n \} , \qquad \delta = | \Delta | ,
\]
then $N = n ( q - \delta )$.  For $s \geq 0$ put
\[
G_s = s D_\infty , \qquad \CC_s = C_L ( D, G_s ) , \qquad M = \frac{(q-1) n}{c} - 1 , \qquad \mu (s) = \min \{ s, M - s \} .
\]
Thus $\CC_0$ is the line spanned by the all-one vector.  The differential $- dx / ( x^q - x )$ has residue one at the points of $D$, and therefore the Euclidean dual of $\CC_s$ is $C_L ( D, A_s )$ for
\[
A_s = (n-1) \Ram - E + ( M - s ) D_\infty .
\]
Here $\Ram$ is the reduced ramification divisor over the roots of $f$, and $E$ is the effective divisor formed by the non-rational affine fibres, so that $\deg E = n \delta$.  These hypotheses exclude curves with $\F_q$-rational branch points; the completely split branch locus requires the modified dual divisor of \cite{mezinaj-shaska-hulls} and is not treated here.

\begin{defn}\label{def-window}
For $s \geq 1$ set
\[
\nu (s) = \mu (s) - \frac{n \delta}{c} , \qquad t (s) = 2 g_\XX - 2 - c \, \mu (s) + n \delta ,
\]
and for $\nu \in \Z$ set
\[
\Phi ( \nu ) = \sum_{j=0}^{n-1} \max \left\{ 0, \left\lfloor \frac{c \nu - d j}{n} \right\rfloor + 1 \right\} .
\]
For $\nu < 0$ this expression is zero.  The Riemann--Roch window is
\[
W ( \XX ) = \{ s \geq 1 : t (s) < 0 \} .
\]
\end{defn}

The integer $t (s)$ is the degree of $K - G_s \wedge A_s$, where $G_s \wedge A_s$ is the meet of the two divisors.  The hull formula of \cite[Theorem~4.13 and Corollaries~3.5 and~4.14]{mezinaj-shaska-hulls} reads, in the present notation: under \textup{(H1)}--\textup{(H3)}, for $s \geq 1$ with $c \, \nu (s) < N$,
\begin{equation}\label{eq-hull-input}
\dim \Hull ( \CC_s ) = \Phi ( \nu (s) ) + \varepsilon ( G_s, A_s ) , \qquad \varepsilon ( G_s, A_s ) \geq 0 ,
\end{equation}
the excess vanishes for every $s \in W ( \XX )$, and if $0 \leq t (s) \leq 2 g_\XX - 2$ then $\dim \Hull ( \CC_s ) \leq g_\XX + 1$.  On the window this gives
\begin{equation}\label{eq-window}
e_{\SSA} ( \CC_s ) = c \, \mu (s) - n \delta + 1 - g_\XX \; \geq \; g_\XX , \qquad s \in W ( \XX ) ,
\end{equation}
the inequality because $c \, \nu (s) > 2 g_\XX - 2$ there; in particular no code $\CC_s$ with $s \in W ( \XX )$ is $\LCD$.  At the other end, with
\begin{equation}\label{eq-splus}
s^+ = M + \left\lfloor \frac{(n-1) d - n \delta}{c} \right\rfloor + 1 ,
\end{equation}
one has $\CC_s^\perp = 0$ for every $s \geq s^+$, see \cite[Theorem~4.15]{mezinaj-shaska-hulls}, hence $e_{\SSA} ( \CC_s ) = 0$ and $\PCE$ for $\CC_s$ reduces in polynomial time to weighted graph isomorphism.

\begin{prop}\label{cor-asymptotics}
Assume \textup{(H1)}--\textup{(H3)} and $\delta = 0$.  Then $N = qn$, and $W ( \XX ) \neq \varnothing$ if and only if $c \lfloor M/2 \rfloor > 2 g_\XX - 2$.  In that case
\[
\max_{s \in W ( \XX )} e_{\SSA} ( \CC_s ) = \frac{N - n - c - c \theta}{2} + 1 - g_\XX , \qquad \theta \in \{ 0, 1 \} , \quad \theta \equiv M \pmod 2 .
\]
Moreover $\delta = 0$ forces $(n-1) q \leq 1 + 2 g_\XX \sqrt q$.
\end{prop}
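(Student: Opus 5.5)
With $\delta = 0$ every $a \in \F_q$ has $f(a) \in (\F_q^*)^n$. The fibre over $a$ therefore consists of $n$ rational points, because $f(a) \neq 0$ and $n \mid q-1$. Hence $N = qn$.

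\emph{Window criterion.} For $\delta = 0$ we have $t(s) = 2g_\XX - 2 - c\,\mu(s)$, so $s \in W(\XX)$ if and only if $c\,\mu(s) > 2g_\XX - 2$. As $s$ ranges over $s \geq 1$, the quantity $\mu(s) = \min\{s, M-s\}$ attains its largest value $\lfloor M/2 \rfloor$ at $s = \lfloor M/2 \rfloor$. This value is $\geq 1$ as long as $M \geq 2$; if $M \leq 1$ the condition $c\lfloor M/2\rfloor > 2g_\XX-2 \geq 0$ fails anyway. For $s > M$, $\mu(s)$ is negative and such $s$ never lie in the window. So $W(\XX) \neq \varnothing$ if and only if $c\lfloor M/2\rfloor > 2g_\XX - 2$.

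\emph{Maximum of the exponent.} On the window, \eqref{eq-window} gives $e_{\SSA}(\CC_s) = c\,\mu(s) + 1 - g_\XX$. This is increasing in $\mu(s)$, so its maximum is $c\lfloor M/2\rfloor + 1 - g_\XX$, attained at $s = \lfloor M/2\rfloor$. That $s$ lies in the window by the criterion just proved. Write $\lfloor M/2\rfloor = (M-\theta)/2$ with $\theta \equiv M \pmod 2$. Then
\[
c\,\frac{M-\theta}{2} = \frac{(q-1)n - c - c\theta}{2} = \frac{N - n - c - c\theta}{2},
\]
using $cM = (q-1)n - c$ and $N = qn$. This is the stated formula.

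\emph{Hasse--Weil bound.} The curve has $|\XX(\F_q)| \geq N + c$, since $D_\infty$ consists of $c$ points. Whether these are rational is not needed: we already have $|\XX(\F_q)| \geq N = qn$. Hasse--Weil gives $qn \leq q + 1 + 2g_\XX\sqrt q$, that is, $(n-1)q \leq 1 + 2g_\XX\sqrt q$.

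The main point of care is that $\lfloor M/2\rfloor$ is an admissible $s \geq 1$, which I handled in the window criterion; no other obstacle is expected.
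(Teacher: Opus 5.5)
Your proof is correct and is essentially the paper's argument. It uses the same three ingredients: the window formula \eqref{eq-window}, the rewriting $c\lfloor M/2\rfloor = (cM - c\theta)/2$ with $cM = N - n - c$, and the Hasse--Weil step. The one difference is that you derive the window criterion directly from $t(s) = 2g_\XX - 2 - c\,\mu(s)$ and $\mu(s) \leq \lfloor M/2\rfloor$, whereas the paper cites \cite[Proposition~4.7]{mezinaj-shaska-hulls}.

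One harmless slip: the claim $|\XX(\F_q)| \geq N + c$ is false in general, because the $c$ points over $x = \infty$ need not be $\F_q$-rational. As you note, you only use $|\XX(\F_q)| \geq N$.
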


\begin{proof}
The criterion for $W ( \XX ) \neq \varnothing$ is \cite[Proposition~4.7]{mezinaj-shaska-hulls}.  By \cref{eq-window} the maximum on the window is attained at $\mu (s) = \lfloor M/2 \rfloor$ and equals $c \lfloor M/2 \rfloor + 1 - g_\XX$.  With $\delta = 0$ one has $cM = (q-1) n - c = N - n - c$ and $c \lfloor M/2 \rfloor = ( cM - c \theta ) / 2$.  The Hasse--Weil bound gives $qn = N \leq \# \XX ( \F_q ) \leq q + 1 + 2 g_\XX \sqrt q$.
\end{proof}

For fixed $(n, d)$ the inequality $(n-1) q \leq 1 + 2 g_\XX \sqrt q$ bounds $q$.  The totally split locus is therefore finite, and the linear growth of \cref{cor-asymptotics} is a statement about $N$ on that finite locus, not an asymptotic in $q$; \cref{prop-pointcount} gives the general form of this constraint.

\subsection{Regular actions and character decompositions}\label{sec-group}

The curve automorphisms give coordinate permutations of the codes.  We use the explicit subgroup of \cite{mezinaj-shaska-hulls}, and make no assertion about the full permutation automorphism group.

\begin{defn}\label{def-mf}
For $f = \sum_i a_i x^i$ put
\[
m (f) = \gcd \bigl( q-1, \{ i : a_i \neq 0 \} \bigr) .
\]
If $m \mid m (f)$ and $\lambda \in \F_q^*$ has order $m$, set
\[
\sigma_\lambda (x, y) = ( \lambda x, y ) , \qquad \tau (x, y) = ( x, \zeta_n y ) ,
\]
and
\[
H_m = \langle \tau, \sigma_\lambda \rangle \simeq \Z / n \Z \times \Z / m \Z .
\]
\end{defn}

Under \textup{(H1)}--\textup{(H3)} and $m \mid m (f)$, the group $H_m$ preserves $D_\infty$ and $\YY$ and acts on each $\CC_s$ by coordinate permutations, so $H_m \leq \PAut ( \CC_s )$ for every $s \geq 1$, see \cite[Propositions~5.2 and~5.3]{mezinaj-shaska-hulls}.

\begin{defn}\label{def-regular-stratum}
We say that $( \XX, m )$ lies on a regular stratum if $H_m$ acts freely and transitively on $\YY$.  Then $N = | H_m | = nm$.
\end{defn}

The regular-stratum criterion of \cite[Propositions~5.3 and~5.5]{mezinaj-shaska-hulls} is as follows.  Assume \textup{(H1)}--\textup{(H3)}, let $m \mid m (f)$, let $\lambda \in \F_q^*$ have order $m$, write $f (x) = g ( x^m )$, and assume that $g (0) \notin ( \F_q^* )^n$.  Then $H_m$ acts freely on $\YY$, and it acts transitively if and only if $N = nm$, if and only if exactly one of the $(q-1)/m$ values $g ( u^m )$, with $u$ ranging over representatives of $\F_q^* / \langle \lambda \rangle$, lies in $( \F_q^* )^n$.  On such a stratum, $\delta = q - m$ and $W ( \XX ) = \varnothing$.  We refer to these as the regular-stratum hypotheses.

On a regular stratum, after choosing one evaluation point, the ambient space $\F_q^N$ is the regular representation of $H_m$.  Each monomial $x^i y^j$ has character $( i \bmod m, j \bmod n )$, and the code is determined by a set of characters.

\begin{defn}\label{def-Ts}
On a regular stratum put
\[
T_s = \{ ( i \bmod m, j \bmod n ) : x^i y^j \in B_s \} \subseteq \Z_m \times \Z_n ,
\]
where
\[
B_s = \{ x^i y^j : i \geq 0 , \ 0 \leq j \leq n-1 , \ n i + d j \leq c s \}
\]
is the weighted monomial basis of $L ( s D_\infty )$, and $- T_s = \{ ( -a, -b ) : (a, b) \in T_s \}$.
\end{defn}

The character formula of \cite[Lemma~5.7 and Theorem~5.8]{mezinaj-shaska-hulls} states that on a regular stratum $\CC_s$ is an ideal of $\F_q [ H_m ]$ with defining set $T_s$, that $\dim \CC_s = | T_s |$, and that
\begin{equation}\label{eq-character}
\Hull ( \CC_s ) = \bigoplus_{\chi \in T_s \setminus ( - T_s )} ( \F_q^N )_\chi , \qquad e_{\SSA} ( \CC_s ) = | T_s | - | T_s \cap ( - T_s ) | ,
\end{equation}
so that $\CC_s$ is $\LCD$ if and only if $T_s = - T_s$.  This is the analogue for $\Z_m \times \Z_n$ of the criterion for cyclic codes in terms of a defining set and its negative; compare \cite{sk}.  Since at most $t = | H_m [2] | = \gcd (2, m) \gcd (2, n)$ characters are their own negatives, \cref{eq-character} gives the uniform bound $e_{\SSA} ( \CC_s ) \leq ( nm - t ) / 2$ for every $s \geq 1$ \cite[Proposition~5.10]{mezinaj-shaska-hulls}.

\section{Cartesian regular strata: distances and genus bounds}\label{sec-cartesian}

The regular action identifies the evaluation set with a Cartesian grid, and the defining set with a downward closed set of lattice points.

\begin{prop}
\label{prop-grid}
Assume the regular-stratum hypotheses of \cref{sec-group} and that $H_m$ acts transitively on $\YY$.  Let $a \in \F_q^*$ be such that $\{ x \in \F_q : f (x) \in ( \F_q^* )^n \} = a \mu_m$, and let $b \in \F_q^*$ satisfy $b^n = f (a)$.  Then
\[
\YY = a \mu_m \times b \mu_n .
\]
Index the coordinates by $( \lambda, \zeta ) \mapsto ( a \lambda, b \zeta )$ and let $\chi_{i, j} ( \lambda, \zeta ) = \lambda^i \zeta^j$.  Then $\CC_s = \bigoplus_{(i, j) \in T_s} \F_q \chi_{i, j}$ with
\[
T_s = \{ (i, j) \in [ 0, m - 1 ] \times [ 0, n - 1 ] : n i + d j \leq c s \} ,
\]
a downward closed subset of the rectangle, and
\[
k_s = | T_s | = \sum_{j = 0}^{n - 1} \min \Bigl\{ m , \ \max \Bigl\{ 0 , \ 1 + \Bigl\lfloor \frac{c s - d j}{n} \Bigr\rfloor \Bigr\} \Bigr\} .
\]
\end{prop}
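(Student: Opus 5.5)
We first identify $\YY$. By the regular-stratum criterion recalled in \cref{sec-group}, with $f(x) = g(x^m)$ and $g(0) \notin (\F_q^*)^n$, transitivity means exactly one coset $u \langle \lambda \rangle$ of $\F_q^*/\mu_m$ has $g(u^m) \in (\F_q^*)^n$. Since $f(a\lambda) = g(a^m\lambda^m) = f(a)$ for $\lambda \in \mu_m$, the set of $x$ with $f(x) \in (\F_q^*)^n$ is a single coset $a\mu_m$; the value $x = 0$ is excluded because $f(0) = g(0)$ is not an $n$-th power. For each such $x$ we have $f(x) = f(a) = b^n$, and because $n \mid q-1$ the $n$-th roots of $b^n$ in $\F_q$ are exactly $b\mu_n$. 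Hence $\YY = a\mu_m \times b\mu_n$, with $N = nm$. The indexing $(\lambda, \zeta) \mapsto (a\lambda, b\zeta)$ is then a bijection, and $\sigma_\lambda$ and $\tau$ act by translation, which matches the regular-representation picture.

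Next we evaluate the basis $B_s$ of $L(sD_\infty)$ from \cref{def-Ts}. The monomial $x^i y^j$ evaluates at $(a\lambda, b\zeta)$ to $a^i b^j \lambda^i \zeta^j$, so its evaluation vector is the nonzero scalar $a^i b^j$ times $\chi_{i \bmod m,\, j}$. Therefore $\CC_s$ is the span of the $\chi_{i \bmod m, j}$ with $x^i y^j \in B_s$. The characters $\chi_{i,j}$ with $(i,j) \in [0,m-1] \times [0,n-1]$ are pairwise distinct characters of $\mu_m \times \mu_n$, and $m, n \mid q-1$, so by orthogonality of characters they are linearly independent. This gives the direct sum decomposition, with defining set the set of reduced pairs.

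The main step is to show that the reduction of $B_s$ modulo $m$ equals the unreduced set $\{(i,j) : 0 \le i \le m-1,\ ni + dj \le cs\}$. One inclusion is clear. For the other, if $x^i y^j \in B_s$ with $i \ge m$, then $x^{i'} y^j$ with $i' = i \bmod m < i$ also lies in $B_s$, since $ni' + dj \le ni + dj \le cs$. So every reduced character is already realized by an unreduced pair in the rectangle, and the set is as stated. This argument works uniformly in $s$, including large $s$ where $B_s$ is larger than $N$ and the evaluation map is not injective; that case needs no separate treatment. The set is downward closed because both coordinates enter $ni + dj$ with positive coefficients.

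Finally, $k_s = |T_s|$ follows from the linear independence above. For fixed $j$, the admissible $i$ satisfy $0 \le i \le \min\{m-1, \lfloor (cs - dj)/n \rfloor\}$, which gives $\min\{m, \max\{0, 1 + \lfloor (cs-dj)/n \rfloor\}\}$ values; summing over $j$ gives the stated formula.
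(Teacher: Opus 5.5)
Your proposal is correct and follows essentially the same route as the paper: constancy of $f$ on $a\mu_m$ gives the grid $a\mu_m \times b\mu_n$, the monomial $x^i y^j$ evaluates to $a^i b^j \chi_{i \bmod m, j}$, replacing $i$ by its least residue does not increase $ni + dj$, and $k_s$ is a row count. The one difference is that you justify $\dim \CC_s = |T_s|$ directly from the linear independence of distinct characters, whereas the paper relies on the character formula recalled in \cref{sec-group}.
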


\begin{proof}
By $m \mid m (f)$ the polynomial $f$ is constant on $a \mu_m$, so the fibre over every point of $a \mu_m$ is $b \mu_n$.  Evaluation of $x^i y^j$ gives $a^i b^j \chi_{i \bmod m, j}$, and replacing $i$ by its least nonnegative residue does not increase $n i + d j$.  Hence $T_s$ is the set of \cref{def-Ts} written with least representatives; it is downward closed because the weight $n i + d j$ is monotone in $i$ and in $j$.  The dimension is the number of lattice points, counted by rows.
\end{proof}

A subset $T \subseteq [ 0, m - 1 ] \times [ 0, n - 1 ]$ is downward closed if $(i, j) \in T$ implies $(i', j') \in T$ whenever $0 \leq i' \leq i$ and $0 \leq j' \leq j$.  The next result is a two-variable instance of the footprint-bound method for Cartesian codes of \cite{cartesian,monomial-cartesian}, whose distance formula it specializes to the weighted staircases $T_s$; the proof is included because the dual case and the equality statement are used later.

\begin{thm}
\label{thm-distance}
Let $T$ be a nonempty downward closed subset of $[ 0, m - 1 ] \times [ 0, n - 1 ]$.  On a grid of $m$ distinct abscissae and $n$ distinct ordinates, the monomial evaluation code $C_T$ has minimum distance
\[
d ( C_T ) = \min_{(i, j) \in T} ( m - i ) ( n - j ) .
\]
On the grid $a \mu_m \times b \mu_n$, if $T$ is also proper,
\[
d ( C_T^\perp ) = \min_{(i, j) \notin T} ( i + 1 ) ( j + 1 ) .
\]
\end{thm}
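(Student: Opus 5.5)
We prove the primal formula with the footprint bound on the grid $S = A \times B$, $|A| = m$, $|B| = n$, and then obtain the dual formula from the character description on $a \mu_m \times b \mu_n$ of \cref{prop-grid}.

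\textbf{Primal lower bound.} Let $I$ be the vanishing ideal of the grid. It is generated by $F(x) = \prod_{\alpha \in A} (x - \alpha)$ and $G(y) = \prod_{\beta \in B} (y - \beta)$, which form a Gröbner basis for any monomial order. Their standard monomials are the $x^i y^j$ with $0 \le i < m$ and $0 \le j < n$. So evaluation is a bijection from the span of those monomials onto $\F_q^{mn}$, and $C_T$ is the image of the span of $\{ x^i y^j : (i,j) \in T \}$.

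Take a nonzero $h$ in that span and fix a graded order. Its leading monomial $x^{i_0} y^{j_0}$ lies in $T$. I would bound the zeros of $h$ on the grid by a fibre count rather than by the Gröbner footprint. Write $h = \sum_{j \le j_1} h_j(x)\, y^j$ with $h_{j_1} \ne 0$, where $j_1$ is the largest $y$-degree occurring. Then $\deg h_{j_1} \le i_1$ for some $(i_1, j_1) \in T$. For each $\alpha \in A$ with $h_{j_1}(\alpha) \ne 0$, the polynomial $h(\alpha, y)$ is nonzero of degree $j_1$, so it has at least $n - j_1$ nonzeros on $B$. There are at least $m - i_1$ such $\alpha$. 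Hence $\wt \ge (m - i_1)(n - j_1)$, and $(i_1, j_1) \in T$ gives the lower bound.

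\textbf{Primal upper bound.} For $(i,j) \in T$ choose $i$ elements of $A$ and $j$ elements of $B$, and take the product of the corresponding linear factors. This product has monomial support in the box $[0,i] \times [0,j]$, which lies in $T$ by downward closure. It vanishes on all points except $(m - i)(n - j)$ of them, so the bound is attained.

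\textbf{Dual formula.} On $a \mu_m \times b \mu_n$, \cref{prop-grid} gives $C_T = \bigoplus_{(i,j) \in T} \F_q \chi_{i,j}$. The characters satisfy $\sum_{P} \chi_{i,j}(P)\, \chi_{k,l}(P) = mn \ne 0$ exactly when $(i + k, j + l) \equiv (0,0)$, and the sum is $0$ otherwise; here $p \nmid mn$, since $m \mid q - 1$ and $p \nmid n$. Therefore
\[
C_T^\perp = \bigoplus_{(i,j) \notin -T} \F_q \chi_{i,j}.
\]
The plan is to rewrite this as a monomial code of the same shape as $C_T$ and reuse the primal argument.

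Use the substitution $(i,j) \mapsto (m - 1 - i, n - 1 - j)$, together with multiplication by the coordinatewise nonzero character $\chi_{1,1}$, which preserves weights. With $\chi_{-i,-j} = \chi_{1,1}\, \chi_{m-1-i,\, n-1-j}$ (indices taken mod $m$ and $n$), the dual becomes monomially equivalent to $C_{T'}$, where
\[
T' = \{ (m - 1 - i,\, n - 1 - j) : (i,j) \notin T \}.
\]
$T'$ is downward closed because the complement of $T$ is upward closed. Applying the primal formula to $T'$ gives
\[
\min_{(i,j) \notin T} (m - (m - 1 - i))(n - (n - 1 - j)) = \min_{(i,j) \notin T} (i + 1)(j + 1).
\]
$T'$ is nonempty because $T$ is proper.

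\textbf{Main obstacle.} The step that needs the most care is the index bookkeeping in the dual. One must verify that $\chi_{-i,-j}$ equals $\chi_{1,1}$ times $\chi_{m-1-i,\, n-1-j}$, and that the scaling by $\chi_{1,1}$ is a diagonal map with nonzero entries, so it preserves Hamming weight.
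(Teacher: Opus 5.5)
Your proof is correct and follows the paper's argument. It uses the same fibre count over the top $y$-degree coefficient for the lower bound, the same product of linear factors for attainment, and the same reflection $(i,j)\mapsto(m-1-i,n-1-j)$ combined with the weight-preserving twist by $\chi_{1,1}$ for the dual (the Gröbner-basis and graded-order remarks are not needed, since the fibre count already shows evaluation is injective).
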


\begin{proof}
Let $F (x, y) \neq 0$ have support in $T$.  Write $F = \sum_{v = 0}^{j} F_v (x) y^v$ with $F_j \neq 0$, and put $i = \deg F_j$.  At least $m - i$ abscissae satisfy $F_j (x) \neq 0$.  At each such abscissa $F (x, y)$ has degree $j$ in $y$ and is nonzero at at least $n - j$ ordinates.  Its evaluated weight is at least $( m - i ) ( n - j )$, and $(i, j) \in T$.  For each $(i, j) \in T$, choose $i$ distinct abscissae $a_u$ and $j$ distinct ordinates $b_v$; the polynomial $\prod_{u = 1}^{i} ( x - a_u ) \prod_{v = 1}^{j} ( y - b_v )$ has support in $T$ and evaluated weight exactly $( m - i ) ( n - j )$.  This proves the first formula.

Put $U = \{ ( m - 1 - i, n - 1 - j ) : (i, j) \notin T \}$, a nonempty downward closed set.  In the character group, $\widehat H \setminus ( - T ) = U + ( 1, 1 )$.  Multiplication by the everywhere nonzero character $\chi_{1, 1}$ identifies $C_U$ with $C_T^\perp$ and preserves weights.  Applying the first formula to $U$ gives the second.  Scaling the two grid coordinates by $a$ and $b$ multiplies each monomial by a nonzero scalar, so both formulas hold on the coset grid.
\end{proof}

\begin{cor}\label{cor-fullspace}
On a regular stratum, $\CC_s = \F_q^N$ if and only if $c s \geq n ( m - 1 ) + d ( n - 1 )$.  For a nonzero proper member, $d ( \CC_s )$ and $d ( \CC_s^\perp )$ are given by \cref{thm-distance} with $T = T_s$.
\end{cor}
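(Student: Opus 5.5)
The plan is to read both assertions of \cref{cor-fullspace} directly off the grid description of \cref{prop-grid}, and then to invoke \cref{thm-distance}.

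For the first assertion, \cref{prop-grid} gives $\dim \CC_s = | T_s |$ and $N = nm$. So $\CC_s = \F_q^N$ holds if and only if $T_s$ is the whole rectangle $[0, m-1] \times [0, n-1]$. Since $T_s$ is downward closed, it equals the rectangle exactly when it contains the corner $(m-1, n-1)$. By the defining inequality of $T_s$, this happens exactly when $n(m-1) + d(n-1) \leq c s$. This step is immediate: the weight $ni + dj$ is monotone in both coordinates, and its maximum over the rectangle is attained at that corner.

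For the second assertion, take a nonzero proper member $\CC_s$. Then $T_s \neq \varnothing$, and in fact $(0,0) \in T_s$ for every $s \geq 0$, since $n \cdot 0 + d \cdot 0 \leq cs$. Properness means $T_s$ is not the whole rectangle. By \cref{prop-grid}, $T_s$ is downward closed in the rectangle. Also, $\CC_s$ is the span of the characters $\chi_{i,j}$, $(i, j) \in T_s$, evaluated on the grid $a \mu_m \times b \mu_n$. These characters are, up to the nonzero scalars $a^i b^j$, exactly the monomials $x^i y^j$ evaluated on a grid with $m$ distinct abscissae and $n$ distinct ordinates. So $\CC_s = C_T$ with $T = T_s$ on that grid.

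The primal formula of \cref{thm-distance} then applies verbatim. The dual formula applies because the grid is the coset grid $a \mu_m \times b \mu_n$ and $T_s$ is proper. The only point I expect to need care is this identification $\CC_s = C_{T_s}$ as actual subspaces of $\F_q^N$. Per-coordinate rescaling would change the dual, so it must be avoided. But the scalars $a^i b^j$ multiply entire codewords, not coordinates, so the span is unchanged and the Euclidean dual is computed on the same code. That settles the identification and completes the argument.
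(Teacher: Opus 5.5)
Your proposal is correct and follows the paper's argument. The full-space criterion comes from the corner $(m-1,n-1)$ of the downward closed set $T_s$ of \cref{prop-grid}, together with $\dim \CC_s = |T_s|$ and $N = nm$; the distance statements are \cref{thm-distance} applied to the nonempty, proper, downward closed set $T = T_s$, and your point that the factors $a^i b^j$ rescale whole codewords rather than coordinates is the same observation the paper makes at the end of the proof of \cref{thm-distance}.
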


\begin{proof}
The largest weight in the rectangle is attained at $( m - 1, n - 1 )$, and by \cref{prop-grid} the code is the full space exactly when the whole rectangle lies in $T_s$.
\end{proof}

\begin{cor}
\label{cor-character-signature}
Let $\rho$ and $\rho'$ be the chosen regular actions of $H_m$ on two regular-stratum codes $\CC_s$ and $\CC'_s$, and suppose a permutation $P$ with $\CC'_s = \CC_s P$ satisfies
\[
P \rho (h) P^{-1} = \rho' ( \alpha (h) ) \qquad ( h \in H_m )
\]
for some $\alpha \in \Aut ( H_m )$.  Then the automorphism of the character group induced by $\alpha$ carries
\[
\Sigma_s = T_s \setminus ( - T_s )
\]
onto the corresponding set for $\CC'_s$.
\end{cor}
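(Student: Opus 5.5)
The plan is to transport the isotypic decomposition along $P$, using that a permutation intertwining the two regular actions up to $\alpha$ carries isotypic components to isotypic components, and then to read off $\Sigma_s$ from the hull via \cref{eq-character}.

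\textbf{Step 1: how $P$ acts on isotypic components.} For a character $\chi$ of $H_m$, write $(\F_q^N)_\chi$ for the $\chi$-isotypic component under $\rho$ and $(\F_q^N)'_{\chi'}$ for the one under $\rho'$. These components are one-dimensional since the actions are regular, $n \mid q-1$ and $m \mid q-1$, so $\F_q$ contains all character values. Take $v \in (\F_q^N)_\chi$, so that $v\rho(h) = \chi(h) v$ with the row-vector convention. From the intertwining relation,
\[
(vP)\,\rho'(\alpha(h)) = v \rho(h) P = \chi(h)\, vP .
\]
Hence $vP$ lies in the component of $\rho'$ for the character $\chi \circ \alpha^{-1}$. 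Thus $P$ maps $(\F_q^N)_\chi$ onto $(\F_q^N)'_{\chi\circ\alpha^{-1}}$. Write $\alpha^*(\chi) = \chi \circ \alpha^{-1}$; this is the automorphism of $\widehat H_m$ induced by $\alpha$. If the paper's convention is $\chi \circ \alpha$ instead, one only replaces $\alpha$ by $\alpha^{-1}$, and the conclusion is unchanged.

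\textbf{Step 2: the hull is carried to the hull.} Since $P$ is a permutation matrix, $\Hull(\CC_s P) = \Hull(\CC_s) P$, as recalled in \cref{sec-equivalence}. Therefore
\[
\Hull(\CC'_s) = \Hull(\CC_s) P .
\]
By \cref{eq-character}, $\Hull(\CC_s)$ is the direct sum of the components $(\F_q^N)_\chi$ over $\chi \in \Sigma_s$. The analogous statement holds for $\CC'_s$ with respect to $\rho'$.

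\textbf{Step 3: compare the index sets.} By Step 1, $\Hull(\CC_s)P$ is the direct sum of the components $(\F_q^N)'_{\alpha^*\chi}$ for $\chi \in \Sigma_s$. A sum of isotypic components determines its index set, because the decomposition is direct. Hence $\alpha^*(\Sigma_s) = \Sigma'_s$, which is the claim.

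The same argument also gives $\alpha^*(T_s) = T'_s$, since $\CC'_s = \CC_s P$ and each code is the sum of the components in its defining set. The statement about $\Sigma_s$ then also follows from the fact that $\alpha^*$ is a group automorphism and so commutes with negation.

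\textbf{Main obstacle.} The only delicate point is bookkeeping. One must check that the character indexing of the two codes is the one attached to the chosen actions $\rho$ and $\rho'$, that is, a choice of base point as in \cref{sec-group}. One must also confirm that changing the base point does not change the isotypic components; it does not, because $\rho$ commutes with itself only up to the abelian translation, and isotypic components are intrinsic to the action.
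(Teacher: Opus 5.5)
Your proposal is correct and follows the paper's argument: transport the isotypic lines along $P$ using the intertwining relation, use $\Hull(\CC_s P) = \Hull(\CC_s) P$, and read off the index sets from \cref{eq-character}. Your remark that $\alpha^*(T_s) = T'_s$ already gives the claim, because $\alpha^*$ commutes with negation, is also right. One point should be made explicit. Your identity $(vP)\rho'(\alpha(h)) = v\rho(h)P$ uses the relation in the form $\rho'(\alpha(h)) = P^{-1}\rho(h)P$, which is the correct intertwining for row vectors with $\CC'_s = \CC_s P$; so the displayed $P\rho(h)P^{-1}$ must be read as conjugation of the underlying coordinate permutations, not literally as a product of matrices acting on row vectors from the right.
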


\begin{proof}
The conjugacy relation carries each isotypic line for $\rho$ to the isotypic line for $\rho'$ whose character is the transport through $\alpha$, and $P$ carries $\Hull ( \CC_s )$ onto $\Hull ( \CC'_s )$.  Apply \cref{eq-character}.
\end{proof}

\begin{rem}
Without the hypothesis that $P$ respects the chosen regular actions, the number $| \Sigma_s | = e_{\SSA} ( \CC_s )$ remains a permutation invariant (\cref{sec-equivalence}), but the labelled set $\Sigma_s$ need not be.
\end{rem}

The two bounds combine with the case $m = d$ of \cite{mezinaj-shaska-hulls} into a genus bound.

\begin{thm}
\label{thm-regular-genus}
On a regular stratum, $e_{\SSA} ( \CC_s ) \leq g_\XX$ for every $s \geq 1$, unless $(n, d, m) = (3, 3, 3)$, where $\max_s e_{\SSA} ( \CC_s ) = 2 = g_\XX + 1$.  If $m = d$ and $(n, d) \neq (3, 3)$, the bound is attained.
\end{thm}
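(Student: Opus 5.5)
The plan is to work entirely on the character side, using \cref{eq-character} in the form $e_{\SSA}(\CC_s)=|\Sigma_s|$ with $\Sigma_s=T_s\setminus(-T_s)$, together with the explicit staircase description of $T_s$ from \cref{prop-grid}. Negation on $\Z_m\times\Z_n$ acts on least representatives in three ways. On interior points $(i,j)$ with $i,j\geq 1$ it is $(i,j)\mapsto(m-i,n-j)$; this is the reflection $(i,j)\mapsto(m-1-i,n-1-j)$ followed by the shift by $(1,1)$, which already appeared in the proof of \cref{thm-distance}. On the row $j=0$ it is $(i,0)\mapsto(m-i,0)$, and on the column $i=0$ it is $(0,j)\mapsto(0,n-j)$. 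I would therefore split $\Sigma_s=\Sigma^{\mathrm{int}}\sqcup\Sigma^{\mathrm{row}}\sqcup\Sigma^{\mathrm{col}}$ accordingly, and write $w(i,j)=ni+dj$, so that membership in $T_s$ is the condition $w\leq cs$.

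An interior point lies in $\Sigma^{\mathrm{int}}$ exactly when $w(i,j)\leq cs<w(m-i,n-j)=nm+dn-w(i,j)$. Such a point therefore has $w(i,j)<(nm+dn)/2$. Moreover its partner $(m-i,n-j)$ is an interior point not in $\Sigma$, so the partner map is injective from $\Sigma^{\mathrm{int}}$ into the interior points outside $T_s$. This gives $|\Sigma^{\mathrm{int}}|\leq\frac12\bigl((m-1)(n-1)-\#\{\text{interior }2\text{-torsion}\}\bigr)$. The row and column contribute at most $\lfloor (m-1)/2\rfloor$ and $\lfloor(n-1)/2\rfloor$ respectively, by the same pairing argument. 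These crude bounds do not suffice by themselves. The key step is to use the single threshold $cs$ for all three pieces at once. If $cs$ is small enough that many interior points lie in $\Sigma$, then their partners of large weight are excluded. If instead the row or column is heavily represented, then $cs<nm$ or $cs<dn$, and this cuts the interior band. Concretely, I would count $|\Sigma_s|$ as the number of lattice points of the rectangle lying in the band $w\leq cs<nm+dn-w$, adjusted on the two axes. I would then maximize this count over $s$, with the maximum occurring at $cs$ near $(nm+dn)/2$.

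Next I would compare the maximum with $g_\XX=((n-1)(d-1)+1-c)/2$. On a regular stratum $f(x)=g(x^m)$ with $\deg f=d$, so $m\mid d$ and $m\leq d$. The band count is nondecreasing in $m$ for fixed $(n,d)$, which reduces the comparison to $m=d$, where $c=\gcd(n,d)$ and the rectangle is $d\times n$. In that case the count is a symmetric lattice count: the interior half of $(d-1)(n-1)$ points, corrected by the points on the anti-diagonal $w=(dn+dn)/2$, whose number is governed by $c$. It should come out to $g_\XX$. For the attainment statement with $m=d$, I would simply invoke the computation in the case $m=d$ of \cite{mezinaj-shaska-hulls}, or exhibit the value of $s$ with $cs=\lfloor (nd+dn)/2\rfloor-1$ and check the count directly.

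The exceptional case $(3,3,3)$ is checked by hand on the $3\times3$ grid. Here $g_\XX=1$, and for $cs$ with $3\cdot 3\leq cs<\dots$ the set $\Sigma_s$ contains one row point and one column point, for example $\{(1,0),(0,1)\}$ at $s=1$. This gives $e_{\SSA}=2$, and the trivial bound $(9-1)/2$ together with monotonicity shows the value is never larger. I expect the main obstacle to be the joint optimization in the preceding paragraph: showing that the axis contributions and the interior band cannot both be near their maxima. This is exactly where small cases such as $(3,3,3)$ escape the bound. I would first try a direct row-by-row count, for each $j$ the number of $i$ in the band, and compare it term by term with the row-by-row expression $\sum_j$ for the genus. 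If that fails, I would fall back on the two-bound combination suggested by the paper: the uniform bound $(nm-t)/2$ for small $nm$, and the bound $g_\XX+1$ from \cref{eq-hull-input} sharpened by one using the axis parity.
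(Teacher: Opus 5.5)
Your setup is correct: the three negation patterns, the band condition $w\le cs<nm+dn-w$ for interior points, and the pairing bounds. In fact your three crude bounds add up to exactly the uniform bound $(nm-t)/2$ with $t=|H_m[2]|$. But the inequality for $m<d$ is never proved.

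\begin{itemize}
\item The case $m<d$ rests entirely on the claim that the maximal band count is nondecreasing in $m$ for fixed $(n,d)$, and you give no argument for it. Passing from $m$ to $d$ changes three things at once: the rectangle, the negation on the first coordinate, and the shape of the staircase. There is no evident injection from $\Sigma_s$ at level $m$ into some $\Sigma_{s'}$ at level $d$.
\item In the form you need (the maximum for $m$ is at most the maximum for $d$), the claim is, given the case $m=d$, just a restatement of the theorem for $m<d$. The reduction is therefore circular.
\item Even granted, it would not give $e_{\SSA}(\CC_s)\le g_\XX=1$ for $(n,d,m)=(3,3,1)$, because the comparison value at $m=d=3$ is $2$.
\item The case $m=d$ is only announced (``it should come out to $g_\XX$''). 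The paper cites \cite[Theorem~5.23]{mezinaj-shaska-hulls} there for both the bound and its attainment; you must do the same or carry out the count.
\item Your fallback does not close the gap: \cref{eq-hull-input} gives only $g_\XX+1$, and only for degrees with $0\le t(s)\le 2g_\XX-2$.
\end{itemize}

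The missing observation is that for $m<d$ no joint optimization is needed. Since $m$ is a proper divisor of $d$, $m\le d/2$, and the crude bound you dismissed already suffices. It is enough to show $nm-t\le 2g_\XX=nd-n-d+2-c$. From $nm\le nd/2$ one gets $2g_\XX-nm+t\ge \tfrac12(n-2)(d-2)-(c-t)$, and a short case check finishes:
\begin{itemize}
\item If $c\le 2$, then $t\ge c$ (for $c=2$, $n$ is even), so the right side is nonnegative.
\item If $c=3$ and $(n,d)\neq(3,3)$, or if $c=4$ (where $t\ge 2$), then $(n-2)(d-2)\ge 4\ge 2(c-t)$.
\item If $c\ge 5$, then $(n-2)(d-2)\ge (c-2)^2\ge 2(c-1)\ge 2(c-t)$.
\item For $(3,3,1)$, directly $nm-t=3-1=2=2g_\XX$.
\end{itemize}
So the crude bounds fail only at $m=d$, exactly where the citation is used.

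Finally, for $(3,3,3)$ the bound $(9-1)/2=4$ does not show that the maximum is $2$; enumerate instead. Here $T_s=\{(i,j)\in[0,2]^2 : i+j\le s\}$, which gives $|\Sigma_s|=2,1,1,0$ for $s=1,2,3,4$, and $\CC_s$ is the full space for $s\ge 4$.
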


\begin{proof}
For $m = d$ this is \cite[Theorem~5.23]{mezinaj-shaska-hulls}.  Let $m < d$.  Since $m \mid d$, one has $m \leq d/2$, and by the bound $e_{\SSA} ( \CC_s ) \leq ( nm - t ) / 2$ of \cite[Proposition~5.10]{mezinaj-shaska-hulls} it suffices to show $nm - t \leq 2 g_\XX = nd - n - d + 2 - c$.  Using $nm \leq nd/2$,
\[
2 g_\XX - nm + t \geq \frac{(n-2)(d-2)}{2} - ( c - t ) .
\]
If $c \leq 2$, then $c \leq t$, because $c = 2$ forces $2 \mid n$ and $t \geq \gcd (2, n) = 2$; the right side is nonnegative.  If $c = 3$, then $n$ and $d$ are multiples of $3$; for $(n, d) = (3, 3)$ the only proper divisor is $m = 1$, and $nm - t = 3 - 1 = 2 = 2 g_\XX$, while otherwise $(n - 2)(d - 2) \geq 4 \geq 2 ( 3 - t )$.  If $c = 4$, then $n$ is even, $t \geq 2$, and $(n-2)(d-2) \geq 4 \geq 2 ( 4 - t )$.  If $c \geq 5$, then $(n-2)(d-2) \geq (c-2)^2 \geq 2 ( c - 1 ) \geq 2 ( c - t )$.
\end{proof}

By contrast, on the window of a totally split curve the exponent reaches $( N - n - c - c \theta ) / 2 + 1 - g_\XX$ by \cref{cor-asymptotics}.

The regular condition is restrictive when $(n, d)$ are fixed.

\begin{prop}
\label{prop-pointcount}
Assume \textup{(H1)}--\textup{(H3)} and let $r_\infty = | \Supp ( D_\infty ) ( \F_q ) |$.  Then
\[
\bigl| n \delta - \bigl( (n - 1) q - 1 + r_\infty \bigr) \bigr| \leq 2 g_\XX \sqrt q ,
\]
so that $\delta = ( 1 - 1/n ) q + O ( \sqrt q )$ for fixed $(n, d)$.  On a regular stratum,
\[
q \leq \Bigl( g_\XX + \sqrt{ g_\XX^2 + n d + c - 1 } \Bigr)^2 .
\]
\end{prop}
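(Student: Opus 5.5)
The plan is to count $\#\XX(\F_q)$ in two ways and then apply the Hasse--Weil bound.

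\emph{Exact count.} Under (H1)--(H3), every $x \in \F_q$ with $f(x) \in (\F_q^*)^n$ contributes $n$ affine rational points. No other $x$ contributes, because $f$ has no rational root. Hence the affine count is $N = n(q - \delta)$. The points at infinity are the points of $\Supp(D_\infty)$, and exactly $r_\infty$ of them are rational. Therefore
\[
\#\XX(\F_q) = n(q - \delta) + r_\infty .
\]

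\emph{First inequality.} Hasse--Weil gives $|\#\XX(\F_q) - (q+1)| \leq 2 g_\XX \sqrt q$. Substituting the exact count,
\[
\#\XX(\F_q) - (q+1) = nq - n\delta + r_\infty - q - 1 = \bigl((n-1)q - 1 + r_\infty\bigr) - n\delta ,
\]
which is exactly the displayed inequality. Dividing by $n$, and using $r_\infty \leq c \leq n$ with $g_\XX$ fixed once $(n,d)$ is fixed, gives $\delta = (1 - 1/n) q + O(\sqrt q)$.

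\emph{Regular-stratum bound, set-up.} On a regular stratum we have $N = nm$ and $\delta = q - m$, as recalled in \cref{sec-group}. Hence $\#\XX(\F_q) = nm + r_\infty$. Only the lower half of Hasse--Weil is needed:
\[
q + 1 - 2 g_\XX \sqrt q \leq nm + r_\infty .
\]

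\emph{Regular-stratum bound, estimating the right side.} I would bound $m \leq d$, since $m \mid m(f)$ divides the exponent $d$ because $a_d \neq 0$. I would bound $r_\infty \leq c$, since $D_\infty$ is reduced of degree $c$. This gives
\[
q - 2 g_\XX \sqrt q - (nd + c - 1) \leq 0 .
\]
This is a quadratic inequality in $u = \sqrt q$, namely $u^2 - 2 g_\XX u - (nd + c - 1) \leq 0$. Its positive root is $g_\XX + \sqrt{g_\XX^2 + nd + c - 1}$, so $u$ is at most this root, and squaring gives the claimed bound on $q$.

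\emph{Main obstacle.} The only delicate step is justifying the rational points at infinity: that the number of $\F_q$-rational points of $\XX$ at infinity is exactly $r_\infty$, with each point of $\Supp(D_\infty)$ counted once. This holds because $D_\infty$ is the reduced divisor above $x = \infty$, so its support consists of the $c$ geometric points over infinity and $r_\infty$ counts the rational ones among them. I will state this explicitly. The inequality $m \leq d$ is the other point I will check, via the divisibility $m \mid d$ noted above.
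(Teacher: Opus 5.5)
Your proof is correct and follows the paper's argument step for step. It uses the exact count $\#\XX(\F_q) = n(q-\delta) + r_\infty$ with the two-sided Hasse--Weil bound for the first inequality, then on a regular stratum the lower Hasse--Weil bound with $N = nm \leq nd$ (via $m \mid m(f) \mid d$) and $r_\infty \leq c$, and finally solves the quadratic inequality in $\sqrt q$. Your extra justifications of $m \mid d$ and of the positive root are exactly the details the paper leaves implicit.
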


\begin{proof}
Every split abscissa contributes $n$ affine points and every other abscissa none, so $\# \XX ( \F_q ) = n ( q - \delta ) + r_\infty$ with $0 \leq r_\infty \leq c$.  The Hasse--Weil bound $| \# \XX ( \F_q ) - q - 1 | \leq 2 g_\XX \sqrt q$ gives the first inequality.  On a regular stratum $m \mid d$ and $N = n m \leq n d$, so $q + 1 - 2 g_\XX \sqrt q \leq N + r_\infty \leq n d + c$, and solving this quadratic inequality in $\sqrt q$ gives the second.
\end{proof}

For fixed $(n, d)$ only finitely many field sizes carry regular strata, and $\delta = O (1)$ is impossible along any sequence with $q \to \infty$.

\section{Computation and numerical hull profiles}\label{sec-computation}

Let $B_s$ be the weighted monomial basis of \cref{def-Ts}.  The generator matrix of $\CC_s$ is
\[
\mathsf G_s = \bigl( x (P)^i y (P)^j \bigr)_{x^i y^j \in B_s , \; P \in \YY} .
\]
Let $k_s = \rank ( \mathsf G_s ) = \dim \CC_s$.  Then
\begin{equation}\label{eq-compute-hull}
e_{\SSA} ( \CC_s ) = k_s - \rank \left( \mathsf G_s \mathsf G_s^{\, t} \right) .
\end{equation}
This is the formula used to compute the tables.

It holds for every $s \geq 1$: a row vector $v$ lies in the left kernel of $\mathsf G_s \mathsf G_s^{\, t}$ if and only if $v \mathsf G_s \in \Hull ( \CC_s )$, so that kernel has dimension $( | B_s | - k_s ) + \dim \Hull ( \CC_s )$.  If $cs < N$, a nonzero function in $L ( s D_\infty )$ has at most $cs$ zeros on $\Supp D$, so evaluation is injective and $k_s = \Phi (s)$, the number of monomials in $B_s$, see \cite{qwag,mezinaj-shaska-hulls}.

\paragraph{Reproducibility.}
The accompanying script \texttt{verify\_tables.py} uses only the Python standard library.  For each displayed equation it enumerates all affine rational points, evaluates $B_s$, row-reduces over the relevant prime field, and computes \cref{eq-compute-hull} for $1 \leq s \leq \max \{ M, s^+ \}$; beyond this range the hull is zero by \cite[Theorem~4.15]{mezinaj-shaska-hulls}.  On every regular-stratum row it also checks \cref{eq-character} at every scanned degree.  The scripts \texttt{verify\_directions.py}, \texttt{verify\_automorphisms.py}, \texttt{verify\_full\_range.py}, \texttt{closure\_cert.py}, \texttt{verify\_scan.py}, \texttt{verify\_types.py}, \texttt{verify\_f9\_paut.py} and \texttt{verify\_moduli.py} recompute the remaining numerical statements of \cref{sec-signatures,sec-compatible-equivalence,sec-rigidity,sec-directions,sec-automorphisms}; the automorphism groups are computed with \texttt{nauty}.  Each script exits with an error if any displayed value disagrees with the direct computation.

\subsection{Computed hull profiles}\label{sec-tables}

The tables in this section record the support-splitting exponent.  The polynomial factor in the support splitting algorithm is not included.

\begin{table}[H]
\caption{Regular strata of \cite{mezinaj-shaska-hulls}.  Here $b = (nm - t)/2$ is the bound of \cite[Proposition~5.10]{mezinaj-shaska-hulls}, $h_{\max} = \max_{s \geq 1} e_{\SSA} ( \CC_s )$, and the last column lists the degrees at which $h_{\max}$ is attained.}
\label{tab-regular}
\small
\begin{tabular}{cclrrrrl}
\toprule
$q$ & $(n, m, d)$ & $f(x)$ & $N$ & $g_\XX$ & $b$ & $h_{\max}$ & maximizing $s$ \\
\midrule
$11$ & $(2, 2, 4)$   & $x^4 - x^2 + 7$ & $4$  & $1$ & $0$  & $0$ & all $s$ \\
$7$  & $(2, 6, 6)$   & $x^6 + 3$     & $12$ & $2$ & $4$  & $2$ & $2$--$6$ \\
$7$  & $(3, 6, 6)$   & $x^6 + 5$     & $18$ & $4$ & $8$  & $4$ & $3, 5, 6$ \\
$11$ & $(2, 5, 5)$   & $x^5 + 6$     & $10$ & $2$ & $4$  & $2$ & $4, 5, 7, 9, 10$ \\
$11$ & $(2, 10, 10)$ & $x^{10} + 2$  & $20$ & $4$ & $8$  & $4$ & $4$--$10$ \\
$11$ & $(5, 5, 5)$   & $x^5 + 2$     & $25$ & $6$ & $12$ & $6$ & $4, 5$ \\
$13$ & $(2, 6, 6)$   & $x^6 + 5$     & $12$ & $2$ & $4$  & $2$ & $2$--$6$ \\
$13$ & $(3, 6, 6)$   & $x^6 + 2$     & $18$ & $4$ & $8$  & $4$ & $3, 5, 6$ \\
$17$ & $(2, 8, 8)$   & $x^8 + 5$     & $16$ & $3$ & $6$  & $3$ & $3$--$8$ \\
$19$ & $(2, 18, 18)$ & $x^{18} + 3$  & $36$ & $8$ & $16$ & $8$ & $8$--$18$ \\
\bottomrule
\end{tabular}
\end{table}

On every row of \cref{tab-regular} the value of $h_{\max}$ was computed both by \cref{eq-compute-hull} and by \cref{eq-character}.  The bound $b$ is attained only in the first row, where every character of $\Z_2 \times \Z_2$ is self-inverse and $b = 0$.  In the nine rows with $m = d$ one has $h_{\max} = g_\XX$, in accordance with \cite[Theorem~5.23]{mezinaj-shaska-hulls}.

\begin{table}[ht]
\caption{Genus two curves from the strata of \cite{sa-sh,mezinaj-shaska-hulls}.  The last two columns record the largest support-splitting exponent and the degrees at which it is attained.}
\label{tab-genus-two}
\small
\begin{tabular}{llrrrrl}
\toprule
field & equation & $N$ & $\delta$ & $| H_{m(f)} |$ & $h_{\max}$ & maximizing $s$ \\
\midrule
$\F_{11}$ & $y^2 = x^6 + 2 x^3 + x + 1$ & $12$ & $5$ & $2$ & $1$ & $5$ \\
$\F_{11}$ & $y^2 = x^6 + x^2 + 1$ & $14$ & $4$ & $4$ & $1$ & $4, 5$ \\
$\F_{11}$ & $y^2 = x^6 + x^3 + 1$ & $10$ & $6$ & $2$ & $1$ & $2$ \\
$\F_{11}$ & $y^2 = x^6 + 1$ & $10$ & $6$ & $4$ & $1$ & $5, 6$ \\
$\F_{19}$ & $y^2 = x^6 + x + 1$ & $16$ & $11$ & $2$ & $1$ & $1$ \\
$\F_{19}$ & $y^2 = x^6 + x^4 + x^2 + 1$ & $22$ & $8$ & $4$ & $1$ & $8, 9, 11, 12$ \\
\bottomrule
\end{tabular}
\end{table}

In every row of \cref{tab-genus-two} the window is empty and $h_{\max} = 1$, so the field-size exponential is $q$.  The two rows over $\F_{11}$ with $N = 10$ have identical dimension sequences $\dim \CC_s = 2, 3, 5, 7, 8, 9, 10, \dots$, see \cite[Example~6.3]{mezinaj-shaska-hulls}, but their hull profiles are supported at different degrees.  The support-splitting exponent is therefore not a function of the length and dimension sequence.

The two totally split curves in the final table are
\[
\begin{split}
\XX_7 : \quad y^2 &= 6 x^6 + x^5 + 6 x^4 + x^3 + 6 x^2 + x + 1 \qquad \text{over } \F_7 , \\
\XX_{11} : \quad y^2 &= 5 x^6 + 9 x^5 + 6 x^4 + 4 x^3 + 7 x^2 + 4 x + 3 \qquad \text{over } \F_{11} .
\end{split}
\]
Both have $m (f) = 1$.  Their windows are $W ( \XX_7 ) = \{ 2, 3 \}$ and $W ( \XX_{11} ) = \{ 2, \dots, 7 \}$, and their thresholds are $s^+ = 9$ and $s^+ = 13$.

\begin{table}[ht]
\caption{Totally split genus two curves of \cite{mezinaj-shaska-hulls}.  The displayed sequence is $h_s = e_{\SSA} ( \CC_s )$ over the listed degrees.}
\label{tab-split-profiles}
\begin{tabular}{lrrrrl}
\toprule
curve & $q$ & $N$ & $M$ & degrees & $( h_s )$ \\
\midrule
$\XX_7$ & $7$ & $14$ & $5$ & $1 \leq s \leq 8$ & $( 2, 3, 3, 3, 3, 2, 1, 0 )$ \\
$\XX_{11}$ & $11$ & $22$ & $9$ & $1 \leq s \leq 11$ & $( 2, 3, 5, 7, 7, 5, 3, 2, 1, 0, 0 )$ \\
\bottomrule
\end{tabular}
\end{table}

On the window the profiles agree with \cref{eq-window}, and the maxima $3$ and $7$ agree with \cref{cor-asymptotics}, both curves having $M$ odd.  Off the window the profile of $\XX_7$ is asymmetric about $M/2$; by \cite[Proposition~4.16]{mezinaj-shaska-hulls} the asymmetry is carried by the excess.  The field-size exponentials are $7^3 = 343$ and $11^7 = 19 \, 487 \, 171$.

\section{Permutation invariants and cryptographic consequences}\label{sec-crypto}

Unless explicitly stated otherwise, the invariants in this section are permutation invariants.  They are not invariants of $\LEP$ without control of the diagonal scalars.

\begin{prop}\label{prop-distinguisher}
Let $\{ \CC_s \}_{s \in I}$ and $\{ \CC'_s \}_{s \in I}$ be two families of linear codes indexed by the same set of degrees.  If $\CC_s$ and $\CC'_s$ are permutation equivalent for every $s \in I$, then the profiles $s \mapsto \dim \Hull ( \CC_s )$ and $s \mapsto \dim \Hull ( \CC'_s )$ agree on $I$.  Consequently a disagreement at any degree certifies that the two indexed families are not pointwise permutation equivalent.
\end{prop}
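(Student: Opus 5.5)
The proof reduces to the permutation invariance of the hull already noted in \cref{sec-equivalence}; no property of the superelliptic family is needed. I would prove it for a single pair of codes and then apply it degree by degree.

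\emph{Invariance for one pair.} Fix $s \in I$ and a permutation matrix $P$ with $\CC'_s = \CC_s P$. Since $P$ is orthogonal, $P P^t = I_N$, so $\langle uP, vP \rangle = u P P^t v^t = \langle u, v \rangle$. Hence $w \in (\CC_s P)^\perp$ if and only if $w P^{-1} \in \CC_s^\perp$, which gives $(\CC_s P)^\perp = \CC_s^\perp P$. Multiplication by the invertible matrix $P$ is a bijection of $\F_q^N$, so it commutes with intersections. Therefore
\[
\Hull ( \CC'_s ) = \CC_s P \cap \CC_s^\perp P = \bigl( \CC_s \cap \CC_s^\perp \bigr) P = \Hull ( \CC_s ) P ,
\]
and because $P$ is invertible, $\dim \Hull ( \CC'_s ) = \dim \Hull ( \CC_s )$.

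\emph{Agreement of the profiles.} Applying this at every $s \in I$, with the permutation $P_s$ depending on $s$, shows that the two profiles agree on $I$. The certificate statement is the contrapositive: if $\dim \Hull ( \CC_s ) \neq \dim \Hull ( \CC'_s )$ for some $s \in I$, then $\CC_s$ and $\CC'_s$ are not permutation equivalent at that $s$, so the families are not pointwise permutation equivalent.

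There is no genuine obstacle. The only point to state explicitly is that $P_s$ may vary with $s$, so the argument needs only pointwise equivalence and no common permutation. I would also add a remark that the argument fails for monomial $M$, since diagonal scalings do not preserve the Euclidean form, consistent with the LCD result of \cite{ca-me-ta-qi-pe}.
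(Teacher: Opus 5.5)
Your proposal is correct and follows the paper's own proof: the paper cites the permutation invariance of the hull from \cref{sec-equivalence} and applies it at each $s \in I$. You additionally write out the verification $(\CC_s P)^\perp = \CC_s^\perp P$, and you note that the permutation may depend on $s$.
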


\begin{proof}
Hull dimension is a permutation invariant (\cref{sec-equivalence}); apply this at each $s \in I$.
\end{proof}

\begin{rem}
This is a necessary condition, not an equivalence test.  A single public code supplies one hull dimension, not a profile, unless the nested family is also public or reconstructible.  On regular strata the set $\Sigma_s$ of \cref{cor-character-signature} refines the numerical invariant when the regular action is part of the data.
\end{rem}

Under \textup{(H1)}--\textup{(H3)} the family $\{ \CC_s \}_{s \geq 1}$ therefore contains three equivalence regimes.  On $W ( \XX )$ the exponent is $c \, \mu (s) - n \delta + 1 - g_\XX \geq g_\XX$ by \cref{eq-window}, and on a totally split curve with nonempty window its maximum is $( N - n - c - c \theta ) / 2 + 1 - g_\XX$ by \cref{cor-asymptotics}.  For $s \geq s^+$ the hull is trivial and $\PCE$ reduces to weighted graph isomorphism.  On a regular stratum the window is empty, the exponent is $| T_s | - | T_s \cap ( - T_s ) |$ by \cref{eq-character}, and it is at most $( nm - t ) / 2$.

\begin{rem}
For signature schemes based on $\LEP$, such as LESS and its variants, see \cite{less,persichetti-santini}, these results describe only the permutation part of the monomial action.  Euclidean hull dimension is not invariant under monomial equivalence: for $q > 3$ every linear code is monomially equivalent to an $\LCD$ code, see \cite{ca-me-ta-qi-pe}.  The reduction of $\LEP$ to $\PCE$ by the closure of \cite{sendrier-simos} replaces $\CC_s$ by a code of length $(q-1) N$ whose hull is not $h ( \CC_s )$.  The hardness of $\LEP$ is discussed in \cite{sendrier-simos,barenghi-biasse-persichetti-santini}.
\end{rem}

$\LCD$ codes are used in masking countermeasures against side-channel attacks, see \cite{ca-gu}.  The range $s \geq s^+$ of \cref{eq-splus} consists of $\LCD$ codes, but $\CC_s^\perp = 0$ gives $\CC_s = \F_q^N$ and $d ( \CC_s ) = 1$.  Useful $\LCD$ members lie below $s^+$ and must be selected together with their minimum distances.

The same hull values give exact entanglement costs in the standard entanglement-assisted construction.

\begin{prop}
\label{prop-eaqecc}
Let $\CC_s$ have dimension $k_s$, Euclidean hull dimension $h_s$, and dual distance $d_s^\perp$.  Then there is an entanglement-assisted quantum code with parameters
\[
[[ N, \, N - k_s - h_s , \, d_s^\perp ; \, k_s - h_s ]]_q .
\]
Assume \textup{(H1)}--\textup{(H3)} and $s \in W ( \XX )$.  Then $k_s = cs + 1 - g_\XX$, $d_s^\perp \geq cs - 2 g_\XX + 2$, and the code has parameters
\[
\bigl[ \! \bigl[ N , \; N - cs - c \, \mu (s) + n \delta + 2 g_\XX - 2 , \; \geq cs - 2 g_\XX + 2 ; \; c \, ( s - \mu (s) ) + n \delta \bigr] \! \bigr]_q ,
\]
where $c \, ( s - \mu (s) ) + n \delta$ equals $n \delta$ for $s \leq M/2$ and $c \, ( 2s - M ) + n \delta$ for $s \geq M/2$.  On a regular stratum the number of entangled pairs is
\[
k_s - h_s = | T_s \cap ( - T_s ) | .
\]
\end{prop}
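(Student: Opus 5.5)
The plan is to invoke the standard entanglement-assisted construction and then substitute the values of $k_s$, $h_s$ and $d_s^\perp$ on the window. For the first claim I would apply the Brun--Devetak--Hsieh construction in its $q$-ary linear-code form (Wilde--Brun, and the hull version of Guenda--Jitman--Gulliver). Take a parity-check matrix $\mathsf H$ of $\CC_s^\perp$, that is, a generator matrix $\mathsf G_s$ of $\CC_s$. The code determined by $\CC_s^\perp$ has length $N$ and dimension $N - k_s$. It needs $c = \rank ( \mathsf G_s \mathsf G_s^{\,t} ) = k_s - h_s$ ebits, by \cref{eq-compute-hull}. It encodes $N - 2k_s + c = N - k_s - h_s$ logical qudits, and its distance is $d ( \CC_s^\perp ) = d_s^\perp$. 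I would cite the general statement rather than reprove it.

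On the window, I would first get $k_s$. Since $s \in W ( \XX )$ means $t (s) < 0$, and $\deg ( K - G_s ) \le t (s)$, we have $\ell ( K - G_s ) = 0$, so Riemann--Roch gives $\ell ( G_s ) = cs + 1 - g_\XX$. Evaluation is injective provided $cs < N$. I expect to extract this from the window condition: $\mu (s) \le s$, together with $cM < N$, should bound things, but this needs care. If it fails directly, I would instead use that $A_s$ has degree $\ge 2g-1$ so that the dual dimension is forced, and obtain $k_s = N - \dim C_L ( D, A_s )$.

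For the dual distance, $\CC_s^\perp = C_L ( D, A_s )$ is also the residue code $C_\Omega ( D, G_s )$. Its designed distance is $\deg G_s - ( 2 g_\XX - 2 ) = cs - 2 g_\XX + 2$ by the Goppa bound. With $h_s$ from \cref{eq-window}, the remaining quantities are arithmetic:
\[
N - k_s - h_s = N - cs - c \, \mu (s) + n \delta + 2 g_\XX - 2 ,
\]
\[
k_s - h_s = c \, ( s - \mu (s) ) + n \delta ,
\]
and the two cases follow from $\mu (s) = s$ or $M - s$.

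On a regular stratum I would use \cref{eq-character} and $k_s = | T_s |$ from \cref{prop-grid}, which give $k_s - h_s = | T_s \cap ( - T_s ) |$ immediately. The only delicate point is the injectivity in the window case, that is, checking $cs < N$ (equivalently $\ell ( G_s - D ) = 0$). I would try to derive it from $t (s) < 0$ combined with $s \le$ the relevant range, and otherwise phrase the dimension through the dual.
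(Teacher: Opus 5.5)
Your outline matches the paper's proof: cite \cite{gu-ji-gu}, get $k_s$ from non-specialty plus injectivity, bound $d_s^\perp$ by the Goppa bound for $C_\Omega(D,G_s)$ (which is what \cite[Corollary~4.10]{mezinaj-shaska-hulls} supplies), take $h_s$ from \cref{eq-window}, and use \cref{eq-character} on regular strata. Your proof of $\ell(K-G_s)=0$ from $\deg(K-G_s)\le t(s)<0$ is correct, and so is the arithmetic.

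The gap is the step you flag yourself, $cs<N$, and neither of your routes closes it. First, $\mu(s)\le s$ bounds $s$ from below, not from above. Second, $cM<N$ is not available. Since $cM-N=n\delta-n-c$, it holds only for $\delta\le 1$, and nothing in the hypotheses forces that. For example, take $(n,d,q)=(2,6,13)$, so $c=g_\XX=2$ and $M=11$. The window inequality $c\lfloor M/2\rfloor>2g_\XX-2+n\delta$ holds with $\delta=3$, while $cM=22>20=N$. Your fallback is not independent either. Since $\deg A_s=N-cs+2g_\XX-2$, the claim $\deg A_s\ge 2g_\XX-1$ is just $cs<N$ restated, and you give no reason for it.

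The missing observation is that $t(s)=\deg\bigl(K-(G_s\wedge A_s)\bigr)$ with $G_s\wedge A_s=\mu(s)D_\infty-E$. So the window condition controls $A_s$ as well as $G_s$; concretely, $\mu(s)\le M-s$. Thus $t(s)<0$ gives $c(M-s)\ge c\,\mu(s)>2g_\XX-2+n\delta$, and since $cM-n\delta=N-n-c$,
\[
cs<cM-n\delta-(2g_\XX-2)=N-n-c-(2g_\XX-2)<N .
\]
Equivalently, $G_s\wedge A_s\le A_s$ gives $\deg(K-A_s)\le t(s)<0$. Moreover $K-A_s\sim G_s-D$, because $A_s=D-G_s+(\omega)$ for $\omega=-dx/(x^q-x)$, so $\ell(G_s-D)=0$. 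The paper's proof simply asserts $cs<N$ on the window; with this line added your argument is complete.
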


\begin{proof}
The construction is \cite[Corollary~3.1]{gu-ji-gu}.  For $s \in W ( \XX )$ the divisor $s D_\infty$ is non-special and $cs < N$, so $k_s = cs + 1 - g_\XX$; the bound on $d_s^\perp$ is \cite[Corollary~4.10]{mezinaj-shaska-hulls}; and $h_s$ is given by \cref{eq-window}.  On a regular stratum, $k_s = | T_s |$ and $h_s$ is given by \cref{eq-character}.
\end{proof}

For $\XX_{11}$ the degrees $s = 4$ and $s = 6$ give $[[ 22, 8, \geq 6 ; 0 ]]_{11}$ and $[[ 22, 6, \geq 10 ; 6 ]]_{11}$.  On a totally split curve the entanglement cost vanishes on $W ( \XX ) \cap [ 1, M/2 ]$, where $\CC_s$ is self-orthogonal and the Calderbank--Shor--Steane construction of \cite{qwag} applies, and grows with slope $2c$ on $W ( \XX ) \cap [ M/2, \infty )$.

The hull profile $s \mapsto \dim \Hull ( \CC_s )$, together with the permutation subgroup $H_{m (f)}$, does not determine the $\overline{\F}_q$-isomorphism class of $\XX$: \cref{thm-moduli-negative} exhibits two non-isomorphic curves over $\F_{19}$ in the family $y^2 = x^6 + a x^3 + 1$ with the same profile and the same group.  The diagonal scalars compatible with the $H_m$-action, and their effect on the hull, are determined in \cref{thm-normalizer,thm-twists}.

\section{Support-splitting signatures and punctured hulls}\label{sec-signatures}

For a code $C \subseteq \F_q^N$ and a coordinate $i$, let $P_i (C)$ be the code obtained by puncturing $C$ at $i$ and $\sh_i (C) = \{ P_i (c) : c \in C , \ c_i = 0 \}$ the code obtained by shortening.  The Sendrier signature of $C$ at $i$ is the weight enumerator
\[
\sigma_C (i) = W_{\Hull ( P_i (C) )} (z) , \qquad W_U (z) = \sum_{u \in U} z^{\wt (u)} .
\]
The support splitting algorithm refines the partition of the coordinates by the values of $\sigma_C$, see \cite{se-ssa}; $\sigma_C$ is its initial signature, and the statements below concern that step.

\begin{prop}\label{prop-constant-signature}
Let $C \subseteq \F_q^N$ be a linear code.
\begin{enumerate}
\item If $C' = CP$ for a permutation matrix $P$ with underlying permutation $\pi$, then $\sigma_{C'} ( \pi (i) ) = \sigma_C (i)$ for every $i$.
\item If $A \leq \PAut (C)$, then $\sigma_C$ is constant on every $A$-orbit of coordinates, and every class of coordinates with equal signature is a union of $A$-orbits.
\item Under \textup{(H1)}--\textup{(H3)} and $m \mid m (f)$, every class of coordinates of $\CC_s$ with equal signature is a union of $H_m$-orbits on $\YY$.  On a regular stratum $\sigma_{\CC_s}$ is constant, and the stabilizer in $H_m$ of any coordinate is trivial.
\end{enumerate}
\end{prop}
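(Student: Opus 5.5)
The plan is to derive all three parts from one identity: puncturing commutes with coordinate permutations, and so does taking the hull.

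For part (1), write $C' = CP$. Then the vector $cP$ has entry $c_i$ at position $\pi(i)$. Deleting position $\pi(i)$ from $cP$ therefore gives the vector obtained by deleting position $i$ from $c$, followed by a permutation of the remaining $N-1$ positions. Hence $P_{\pi(i)}(C') = P_i(C)\,P'$ for a permutation matrix $P'$ of size $N-1$. By the observation in \cref{sec-equivalence}, $(CP')^\perp = C^\perp P'$ and $\Hull(CP') = \Hull(C)P'$. This gives $\Hull(P_{\pi(i)}(C')) = \Hull(P_i(C))\,P'$. A coordinate permutation preserves Hamming weight, so the two weight enumerators coincide, which is $\sigma_{C'}(\pi(i)) = \sigma_C(i)$.

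For part (2), take $C' = C$ and $P$ the matrix of an element $\pi \in A$. Then $CP = C$, and part (1) gives $\sigma_C(\pi(i)) = \sigma_C(i)$. So $\sigma_C$ is constant on $A$-orbits. It follows that each level set of $\sigma_C$ is a union of $A$-orbits.

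For part (3), the inclusion $H_m \le \PAut(\CC_s)$ for $s \ge 1$ is the cited result \cite[Propositions~5.2 and~5.3]{mezinaj-shaska-hulls}, so part (2) applies with $A = H_m$. For $s = 0$ the code is spanned by the all-one vector, whose permutation automorphism group is all of $\Sym(\YY)$, so the same conclusion holds. On a regular stratum $H_m$ acts transitively on $\YY$, so there is a single orbit and $\sigma_{\CC_s}$ is constant. Freeness of the action means exactly that the stabilizer of every point is trivial.

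There is no real obstacle. The one step needing care is checking that the induced permutation $P'$ on the $N-1$ remaining coordinates is well defined, which amounts to the index bookkeeping when the punctured position $i$ is carried to $\pi(i)$.
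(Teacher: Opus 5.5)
Your proposal is correct and follows the paper's proof essentially verbatim: puncturing $CP$ at $\pi(i)$ is a permutation of puncturing $C$ at $i$, and hulls and weight enumerators are permutation invariants; (2) is (1) applied to the elements of $A$; and (3) is (2) with $A = H_m$, together with freeness and transitivity on a regular stratum. Your separate treatment of $s = 0$ via $\PAut(\CC_0) = \Sym(\YY)$ is a harmless addition, since the cited inclusion $H_m \leq \PAut(\CC_s)$ is stated only for $s \geq 1$.
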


\begin{proof}
If $C' = CP$, then $P_{\pi (i)} ( C' )$ and $P_i (C)$ are permutation equivalent, hence so are their hulls, and weight enumerators are permutation invariants.  This gives (1), and (2) follows by applying (1) to the elements of $A$.  For (3), $H_m \leq \PAut ( \CC_s )$ by \cite[Proposition~5.3]{mezinaj-shaska-hulls}; on a regular stratum the action is free and has one orbit.
\end{proof}

A constant signature does not make the instance unsolvable.  It means that the initial partition is trivial, that any two solutions of the instance differ by an element of $\PAut (C)$, and that the algorithm must puncture before it can refine.

Off the regular locus the group $H_m$ still constrains the partition.  For $y^2 = x^6 + 2 x^3 + x + 1$ over $\F_{11}$ one has $N = 12$, $m (f) = 1$, and $H_1 = \langle \tau \rangle$ has six orbits of size two on $\YY$.  \cref{tab-signature} records the sizes of the equal-signature classes.  Each class is a union of $\tau$-orbits, and the signature separates coordinates at $s = 2$ and $s = 6$ only.

\begin{table}[ht]
\caption{Equal-signature classes of $\sigma_{\CC_s}$ for $y^2 = x^6 + 2 x^3 + x + 1$ over $\F_{11}$; $N = 12$, six $H_1$-orbits of size two.}
\label{tab-signature}
\begin{tabular}{rrrl}
\toprule
$s$ & $k_s$ & $h_s$ & class sizes \\
\midrule
$1$ & $2$ & $0$ & $(12)$ \\
$2$ & $3$ & $0$ & $(8, 2, 2)$ \\
$3$ & $5$ & $0$ & $(12)$ \\
$4$ & $7$ & $0$ & $(12)$ \\
$5$ & $9$ & $1$ & $(12)$ \\
$6$ & $10$ & $0$ & $(10, 2)$ \\
$7$ & $11$ & $0$ & $(12)$ \\
\bottomrule
\end{tabular}
\end{table}

\paragraph{Punctured hulls.}
On a regular stratum the common value of the signature is computable from the defining set, and the puncture lowers the hull dimension by exactly one.

\begin{lem}\label{lem-active}
Let $C \subseteq \F_q^N$ have hull dimension $h > 0$.  If some hull word is nonzero at coordinate $i$, then puncturing at $i$ is injective on $C$ and
\[
\Hull ( P_i (C) ) = \sh_i ( \Hull (C) ) , \qquad \dim \Hull ( P_i (C) ) = h - 1 .
\]
\end{lem}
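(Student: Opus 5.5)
Write $H = \Hull (C)$, and let $e_i$ denote the $i$-th standard basis vector.

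Injectivity of $P_i$ on $C$ comes first. The kernel of $P_i$ restricted to $C$ is $C \cap \F_q e_i$. Suppose $e_i \in C$. Every hull word $u$ lies in $C^\perp$, so $\langle u, e_i \rangle = u_i = 0$ for all $u \in H$. This contradicts the hypothesis that some hull word is nonzero at $i$. Hence $P_i$ is injective on $C$, and $\dim P_i (C) = \dim C$.

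Next I identify the dual of the punctured code, using the standard relation
\[
P_i (C)^\perp = \sh_i ( C^\perp ) .
\]
To check it, take $v \in \F_q^{N-1}$ and extend it by $0$ at position $i$ to a vector $\tilde v$. Then $\langle v, P_i (c) \rangle = \langle \tilde v, c \rangle$ for every $c$, so $v \perp P_i (C)$ exactly when $\tilde v \in C^\perp$. This gives
\[
\Hull ( P_i (C) ) = P_i (C) \cap \sh_i ( C^\perp ) .
\]

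Now I show this intersection equals $\sh_i (H)$. The inclusion $\sh_i (H) \subseteq \Hull ( P_i (C) )$ is immediate, since a hull word vanishing at $i$ punctures to a vector lying in both $P_i (C)$ and $\sh_i ( C^\perp )$. For the reverse inclusion, take $w$ in the intersection. Write $w = P_i (c)$ with $c \in C$, and $w = P_i (c')$ with $c' \in C^\perp$ and $c'_i = 0$. Then $c - c'$ is supported on $\{ i \}$, so $c = c' + \lambda e_i$ for some $\lambda \in \F_q$. Pick a hull word $u$ with $u_i \neq 0$. Since $u \in C^\perp$ and $c \in C$, we have $\langle c, u \rangle = 0$. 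Since $u \in C$ and $c' \in C^\perp$, we have $\langle c', u \rangle = 0$. Subtracting gives $\lambda u_i = 0$, so $\lambda = 0$ and $c = c' \in C \cap C^\perp = H$. Moreover $c_i = c'_i = 0$, so $w \in \sh_i (H)$.

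Finally, the dimension count. The map $H \to \F_q$, $u \mapsto u_i$, is a nonzero linear functional by hypothesis, so its kernel has dimension $h - 1$. Puncturing at $i$ is injective on that kernel, so $\dim \sh_i (H) = h - 1$.

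The only delicate step is the reverse inclusion. One must use the hull word nonzero at $i$ to kill the $\lambda e_i$ ambiguity. This is exactly where the hypothesis enters a second time, after its use in the injectivity step.
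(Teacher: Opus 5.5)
Your proof is correct, but it takes a different route from the paper's.

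The paper works with matrices. It fixes a full-row-rank generator matrix $G$ and notes that puncturing at $i$ replaces the Gram matrix $B = G G^t$ by the rank-one update $B - u u^t$, where $u$ is the $i$th column of $G$. It reads the hypothesis as saying that $u$ is not orthogonal to $\ker B$, and uses the symmetry of $B$ to get $u \notin \im B$. It concludes $\ker ( B - u u^t ) = \ker B \cap \ker u^t$, which it identifies with the hull words vanishing at $i$.

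You argue basis-free instead. The identity $P_i (C)^\perp = \sh_i ( C^\perp )$ turns the punctured hull into $P_i (C) \cap \sh_i ( C^\perp )$. The hypothesis then enters only to kill the $\lambda e_i$ ambiguity.

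What your route buys:
\begin{itemize}
\item It identifies $\Hull ( P_i (C) )$ as a set directly, without using injectivity. The paper needs the injectivity just proved so that the punctured generator keeps full row rank, which equates the nullity of $B - u u^t$ with $\dim \Hull ( P_i (C) )$.
\item The same decomposition $c = c' + \lambda e_i$ extends: when $h = 0$ and puncturing is injective, $w \mapsto \lambda$ is a linear embedding of the punctured hull into $\F_q$.
\end{itemize}

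What the paper's route buys is reuse. The rank-one update gives the $h = 0$ case of \cref{thm-puncture} through $\det ( B - u u^t ) = \det (B) ( 1 - u^t B^{-1} u )$, and its rank-$t$ analogue is the computation behind \cref{thm-multipuncture}.
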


\begin{proof}
Since a hull word is orthogonal to $C$, the hypothesis implies $e_i \notin C$, so puncturing is injective.  Choose a full-row-rank generator matrix $G$, put $B = G G^t$, and let $u$ be the $i$th column of $G$.  The Gram matrix after puncturing is $B - u u^t$.  A row vector $v$ lies in $\ker B$ if and only if $v G \in \Hull (C)$, and $v u = ( v G )_i$; the hypothesis therefore says that $u$ is not orthogonal to $\ker B$, hence $u \notin \im B$ because $B$ is symmetric.  If $( B - u u^t ) v = 0$, the equation $B v = u ( u^t v )$ forces $u^t v = 0$ and then $B v = 0$; conversely these two conditions give $( B - u u^t ) v = 0$.  Hence $\ker ( B - u u^t ) = \ker B \cap \ker u^t$, which has dimension $h - 1$ and corresponds to the hull words vanishing at $i$.
\end{proof}

\begin{thm}[Single punctures of transitive codes]\label{thm-puncture}
Let $0 \neq C \neq \F_q^N$ be a code with a transitive permutation automorphism group, $k = \dim C$ and $h = \dim \Hull (C)$.
\begin{enumerate}
\item If $h > 0$, then for every coordinate $i$, $\Hull ( P_i (C) ) = \sh_i ( \Hull (C) )$ and $\dim \Hull ( P_i (C) ) = h - 1$.
\item If $h = 0$ and $p \nmid N$, then for every $i$, $\dim \Hull ( P_i (C) )$ equals $1$ if $k \equiv N \pmod p$ and $0$ otherwise.
\end{enumerate}
In both cases $\sigma_C$ is constant.
\end{thm}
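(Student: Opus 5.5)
The plan is to reduce both parts to the Gram-matrix computation in the proof of \cref{lem-active}. Transitivity of $\PAut(C)$ will make the relevant quantity independent of the coordinate. The final sentence, that $\sigma_C$ is constant, follows at once from \cref{prop-constant-signature}(2) applied with $A = \PAut(C)$, which has a single orbit.

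For (1), the key observation is that $\PAut(C)$ preserves $\Hull(C)$. A permutation matrix $P$ with $CP = C$ also satisfies $C^\perp P = C^\perp$, because permutations preserve the Euclidean form, and hence $\Hull(C)P = \Hull(C)$. Since $h > 0$, some hull word $w$ is nonzero at some coordinate $j$. By transitivity there is $\pi \in \PAut(C)$ with $\pi(j) = i$, and the transported word $wP_\pi$ is a hull word nonzero at $i$. Now \cref{lem-active} applies verbatim and gives both $\Hull(P_i(C)) = \sh_i(\Hull(C))$ and the dimension $h-1$.

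For (2), I would first show that puncturing at $i$ is injective, which amounts to $e_i \notin C$. If $e_i \in C$, transitivity would put every $e_j$ in $C$, forcing $C = \F_q^N$, which is excluded. Take a full-row-rank generator matrix $G$, put $B = GG^t$, and let $u$ be the $i$th column of $G$, as in \cref{lem-active}. Since $h = 0$, the matrix $B$ is invertible, and the punctured Gram matrix factors as
\[
B - uu^t = B\,(I - B^{-1}uu^t).
\]
The second factor is a rank-one perturbation of the identity, so its kernel has dimension at most $1$. Its kernel is nonzero exactly when $u^t B^{-1} u = 1$, in which case it is the line spanned by $B^{-1}u$. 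Because the punctured generator matrix still has full row rank, $\dim \Hull(P_i(C))$ equals this kernel dimension, by the same left-kernel argument used for \cref{eq-compute-hull}.

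The main step is to evaluate $u^t B^{-1} u$ independently of $i$. I would identify it as the $(i,i)$ entry of $\Pi = G^t B^{-1} G$. Since $C$ is $\LCD$, $\Pi$ is the matrix of the projection of $\F_q^N = C \oplus C^\perp$ onto $C$ along $C^\perp$, acting on row vectors, so $\operatorname{tr}\Pi = k$. Every $P \in \PAut(C)$ preserves both $C$ and $C^\perp$, so by uniqueness of this projection $P^{-1}\Pi P = \Pi$. Transitivity then makes all diagonal entries of $\Pi$ equal. This gives $N\,\Pi_{ii} = k$ in $\F_q$, and since $p \nmid N$, $\Pi_{ii} = k/N$. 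Thus the kernel is nontrivial iff $k/N = 1$ in $\F_q$, i.e. iff $k \equiv N \pmod p$, which yields the stated dichotomy.

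I expect the only delicate point to be this equivariance of the projection matrix together with the trace identity over $\F_q$. That is exactly where both the $\LCD$ hypothesis and $p \nmid N$ enter.
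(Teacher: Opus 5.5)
Your proposal is correct and follows the paper's proof essentially step for step. In part (1) you use invariance of the hull under $\PAut(C)$ together with transitivity to invoke \cref{lem-active}, and in part (2) you use the $\PAut(C)$-equivariant projection $\Pi = G^t B^{-1} G$, whose diagonal is constant and equal to $k/N$, inside the rank-one update $B - uu^t$ of the Gram matrix. The only cosmetic difference is that you read off the kernel from the factorization $B(I - B^{-1}uu^t)$, whereas the paper uses the determinant identity $\det(B - uu^t) = \det(B)\,(1 - u^t B^{-1} u)$ together with the fact that a rank-one modification of an invertible matrix has nullity at most one.
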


\begin{proof}
A proper transitive code contains no unit vector, since transitivity would put every unit vector in it; so each puncture is injective.  The hull is invariant under the same transitive group.  If it is nonzero, every coordinate is nonzero on some hull word, and \cref{lem-active} applies.  If $h = 0$, choose a full-row-rank generator $G$ and set $B = G G^t$, which is invertible.  The matrix $\Pi = G^t B^{-1} G$ is the orthogonal projection onto $C$; it commutes with every permutation automorphism, so its diagonal is constant, and $\operatorname{tr} \Pi = k$ gives diagonal entries $k / N$.  For the $i$th column $u$ of $G$, $\det ( B - u u^t ) = \det (B) ( 1 - u^t B^{-1} u ) = \det (B) ( 1 - k / N )$, and a rank-one modification of an invertible matrix has nullity at most one.  Constancy of $\sigma_C$ is \cref{prop-constant-signature}.
\end{proof}

The projection $\Pi$ is also the basis of the trivial-hull reduction to weighted graph isomorphism of \cite{ba-ot-sa}.

\begin{cor}
\label{cor-signatures}
Let $C_T$ be a nonzero proper character code on $H$, let $E = T \setminus ( - T )$, and put $h = | E |$.
\begin{enumerate}
\item If $h > 0$, fix $\chi_0 \in E$.  The common initial signature is the weight enumerator of the code of length $N - 1$ generated by the vectors $( \chi (x) - \chi_0 (x) )_{x \in H \setminus \{ 0 \}}$, $\chi \in E \setminus \{ \chi_0 \}$.  In particular $h = 1$ gives $\sigma_{C_T} (i) = 1$.
\item If $h = 2$ and $r$ is the order of $\chi_1 - \chi_0$, where $E = \{ \chi_0, \chi_1 \}$, then
\[
\sigma_{C_T} (i) = 1 + ( q - 1 ) z^{N - N/r} .
\]
\item If $h = 0$ and $k \not\equiv N \pmod p$, then $\sigma_{C_T} (i) = 1$.  If $h = 0$ and $k \equiv N \pmod p$, put
\[
K_T (x) = \frac{1}{N} \sum_{\chi \in T} \chi (x) , \qquad w = \bigl| \{ x \in H \setminus \{ 0 \} : K_T (x) \neq 0 \} \bigr| ;
\]
then $\sigma_{C_T} (i) = 1 + ( q - 1 ) z^{w}$.
\end{enumerate}
\end{cor}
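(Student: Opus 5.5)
The plan is to reduce everything to \cref{thm-puncture}, using the regular action of $H$ on its own coordinates and the explicit character description \cref{eq-character}. The code $C_T$ is invariant under translations by $H$, so its permutation automorphism group is transitive. By \cref{prop-constant-signature} it therefore suffices to compute $\sigma_{C_T}$ at the coordinate $i = 0$, the identity of $H$. Throughout I use the orthogonality relation
\[
\sum_{x \in H} \chi (x) \psi (x) = N \text{ if } \psi = -\chi, \qquad 0 \text{ otherwise},
\]
which is the source of the Euclidean duality $T \leftrightarrow -T$. I also use $p \nmid N$: the characters take values in $\F_q$, so the exponent of $H$ divides $q-1$, and on a regular stratum this is just $n, m \mid q-1$.

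For (1), by \cref{eq-character} the hull is $\bigoplus_{\chi \in E} \F_q \chi$. By \cref{thm-puncture}(1), $\Hull ( P_0 (C_T) ) = \sh_0 ( \Hull (C_T) )$. A hull word $\sum_{\chi \in E} a_\chi \chi$ vanishes at $0$ exactly when $\sum a_\chi = 0$, because $\chi (0) = 1$. Hence this subspace has basis $\{ \chi - \chi_0 : \chi \in E \setminus \{ \chi_0 \} \}$, and puncturing at $0$ gives the stated generators on $H \setminus \{ 0 \}$. When $h = 1$ the shortened code is zero, so $\sigma = 1$.

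For (2), the punctured hull is the line spanned by $\chi_1 - \chi_0 = \chi_0 \cdot ( \psi - 1 )$ with $\psi = \chi_1 - \chi_0$. Since $\chi_0$ never vanishes, this word is nonzero at $x$ exactly when $\psi (x) \neq 1$, that is, when $x \notin \ker \psi$. The kernel has order $N/r$ and contains $0$, so on $H \setminus \{ 0 \}$ the weight is $N - N/r$. All $q-1$ nonzero multiples have this same weight, which gives $1 + (q-1) z^{N - N/r}$.

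For (3), \cref{thm-puncture}(2) already gives $\sigma = 1$ when $k \not\equiv N \pmod p$. When $k \equiv N$, the proof of \cref{thm-puncture} shows $u^t B^{-1} u = k/N = 1$ in $\F_q$. Then $v = B^{-1} u$ satisfies $( B - u u^t ) v = u - u = 0$, so it spans the one-dimensional kernel. The corresponding punctured hull word is $v^t G = u^t B^{-1} G$, which is the $0$-th row of the orthogonal projection $\Pi$ punctured at $0$.

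The main step, and the one needing care, is identifying $\Pi$ explicitly. Because $h = 0$ we have $T = -T$. I claim $\Pi_{x,y} = K_T (x - y)$, i.e.\ $\Pi$ is convolution with the idempotent $e_T = \frac1N \sum_{\chi \in T} \chi$. To verify it I would check three things using the orthogonality relation:
\begin{itemize}
\item $\Pi$ fixes each $\chi \in T$;
\item $\Pi$ kills every $\psi \notin T$;
\item $\Pi$ is symmetric, which follows from $T = -T$.
\end{itemize}
Since an orthogonal projection onto $C_T$ is unique, this identifies $\Pi$. The $0$-th row is then $x \mapsto K_T (-x)$, whose support on $H \setminus \{ 0 \}$ has size $w$ because $K_T (-x) = K_T (x)$ when $T = -T$. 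Together with the $q-1$ scalar multiples this yields $1 + (q-1) z^{w}$.
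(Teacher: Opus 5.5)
Your proposal is correct and follows essentially the paper's route: transitivity reduces everything to the identity coordinate, \cref{thm-puncture} identifies the punctured hull with the shortened hull in (1) and (2), and in (3) the punctured hull is spanned by the identity row of the projection $\Pi$, whose entries are $K_T(x-y) = K_T(y-x)$ because $T = -T$. The differences are only in presentation: you exhibit the Gram-kernel vector $B^{-1}u$, whereas the paper checks directly that the punctured row is orthogonal to every punctured codeword ($c_0 - c_0 = 0$), and you verify the formula for $\Pi$, which the paper simply asserts.
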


\begin{proof}
At the identity all characters take the value one, so a hull word vanishes there precisely when the sum of its coefficients is zero; the displayed differences are a basis of that hyperplane, and by \cref{thm-puncture} the punctured hull is the shortening.  In dimension two the generator $\chi_1 - \chi_0$ vanishes exactly on the kernel of the character $\chi_1 - \chi_0$, which has $N / r$ elements including the identity, giving (2).  In the $\LCD$ case $T = - T$, the projection has entries $\Pi_{x, y} = K_T ( y - x )$.  If its diagonal is one, the identity row of $\Pi$ punctured at the identity spans the punctured hull, its inner product with every punctured codeword $c$ being $c_0 - c_0 = 0$, and it is nonzero because $C$ contains no unit vector.  The other assertions are \cref{thm-puncture}.
\end{proof}

\begin{thm}
\label{thm-multipuncture}
Let $C_T$ be a character code, $E = T \setminus ( - T )$, $R = T \cap ( - T )$ and $h = | E |$.  Let $S = \{ x_1, \dots, x_t \} \subseteq H$ and assume that puncturing at $S$ is injective on $C_T$, which holds in particular if $t < d ( C_T )$.  Put
\[
B_S = \bigl( \chi ( x_a ) \bigr)_{\chi \in E , \ 1 \leq a \leq t} , \qquad ( K_S )_{a, b} = \frac{1}{N} \sum_{\chi \in R} \chi ( x_a - x_b ) .
\]
If the columns of $Z$ form a basis of $\ker B_S \subseteq \F_q^t$, then
\[
\dim \Hull ( P_S ( C_T ) ) = h - \rank B_S + \nul \bigl( Z^t ( I_t - K_S ) Z \bigr) ,
\]
where a $0 \times 0$ matrix has nullity zero.  If $\rank B_S = t$, then injectivity is automatic and $\dim \Hull ( P_S ( C_T ) ) = h - t$.
\end{thm}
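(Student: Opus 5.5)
The plan is to compute the punctured hull directly as the nullity of a punctured Gram matrix, as in the proof of \cref{lem-active}. Take as generator matrix $G$ of $C_T$ the $|T| \times N$ matrix whose rows are the characters $\chi \in T$, ordered with $E$ first and $R$ second. Since puncturing at $S$ is injective, the punctured rows still form a basis of $P_S(C_T)$. The Gram matrix of the punctured code is therefore $B - U U^t$, where $B = G G^t$ and $U$ is the $|T| \times t$ block of columns of $G$ indexed by $S$. As in the proof of \cref{lem-active}, $\dim \Hull(P_S(C_T)) = \nul(B - UU^t)$. Write $U$ in block form, with upper block $B_S$ (rows indexed by $E$) and lower block $R_S = (\chi(x_a))_{\chi \in R,\, a}$.

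The first step is to use character orthogonality: $\langle \chi, \psi \rangle = N$ if $\chi + \psi = 0$ and $0$ otherwise. If $\chi \in E$, then $-\chi \notin T$. If $\psi \in R$ and $\chi = -\psi$, then $\chi \in R$. So every row indexed by $E$ pairs to zero with all of $T$, and $B = \diag(0, B_R)$. Here $B_R$ is $N$ times the permutation matrix of $\chi \mapsto -\chi$ on $R$, so it is invertible with $B_R^{-1} = N^{-2} B_R$. A short check then gives
\[
K_S = R_S^t B_R^{-1} R_S , \qquad (K_S)_{a,b} = \frac1N \sum_{\chi \in R} \chi(x_a)\,\chi(-x_b) = \frac1N\sum_{\chi\in R}\chi(x_a - x_b),
\]
using that $R = -R$.

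Next, solve $(B - UU^t)v = 0$ with $v = (v_E, v_R)$. Put $w = U^t v = B_S^t v_E + R_S^t v_R \in \F_q^t$. The $E$-rows of the system read $B_S w = 0$, so $w = Zy$ for a unique $y$. The $R$-rows read $B_R v_R = R_S w$, so $v_R = B_R^{-1} R_S Z y$. Substituting back into the definition of $w$ gives
\[
B_S^t v_E = (I_t - K_S) Z y .
\]
This equation is solvable in $v_E$ if and only if $(I_t - K_S)Zy \in \im B_S^t = (\ker B_S)^\perp$, which is equivalent to $Z^t (I_t - K_S) Z y = 0$. For each admissible $y$ the solutions $v_E$ form a coset of $\ker B_S^t$, which has dimension $h - \rank B_S$. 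The map $v \mapsto (v_E, y)$ is injective, because $y$ is recovered from $w = U^t v$ and $v_R$ is determined by $y$. Counting dimensions then gives $h - \rank B_S + \nul(Z^t(I_t - K_S)Z)$.

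For the last clause, $\rank B_S = t$ forces $\ker B_S = 0$, so $Z$ is empty and the formula reduces to $h - t$. Injectivity also follows. Suppose $c \in C_T$ is supported on $S$. Each $\chi \in E$ lies in the hull, so it is orthogonal to $c$, which gives $\sum_a c_{x_a}\chi(x_a) = 0$. Hence $B_S (c_{x_a})_a = 0$, and full column rank gives $c = 0$. The case $t < d(C_T)$ is immediate, since no nonzero codeword has weight at most $t$.

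I expect the main obstacle to be the dimension bookkeeping in the third step: identifying the solvability condition with the symmetric form $Z^t(I_t - K_S)Z$, and checking that the parametrization of the kernel by $(v_E, y)$ is bijective and does not double-count.
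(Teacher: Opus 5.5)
Your proposal is correct and is essentially the paper's proof. It uses the same block Gram matrix (the paper puts $R$ first and writes $\diag(J,0)$ where you have $\diag(0,B_R)$), the same substitution $w = U^t v$ leading to $B_S^t v_E = (I_t - K_S)w$ with $w \in \ker B_S$, and solvability via $\im B_S^t = (\ker B_S)^\perp$, with a fibre of dimension $h - \rank B_S$. It also gives the same orthogonality-to-the-hull argument for injectivity when $\rank B_S = t$.

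Your explicit inverse $B_R^{-1} = N^{-2} B_R$ and your bookkeeping for $v \mapsto (v_E, y)$ only make explicit what the paper compresses into ``character orthogonality'' and ``each solution has a fibre''. The one step you left implicit is surjectivity onto the admissible pairs, which is the one-line check $U^t v = (I_t - K_S) Z y + K_S Z y = Z y$.
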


\begin{proof}
Order the character generator rows with $R$ first and $E$ second.  Their Gram matrix is $\diag ( J, 0 )$ with $J$ invertible by orthogonality.  Let $A_S$ be the matrix of the $R$-rows restricted to $S$.  The Gram matrix after puncturing is $\diag ( J, 0 ) - \binom{A_S}{B_S} \bigl( A_S^t \ B_S^t \bigr)$.  For a kernel vector $( u, v )$ put $w = A_S^t u + B_S^t v$; the kernel equations are $u = J^{-1} A_S w$, $B_S w = 0$ and $( I_t - A_S^t J^{-1} A_S ) w = B_S^t v$.  Character orthogonality gives $A_S^t J^{-1} A_S = K_S$, symmetric because $R = - R$.  For $w \in \ker B_S$ the last equation is solvable exactly when $( I_t - K_S ) w \perp \ker B_S$, since $\im B_S^t = ( \ker B_S )^\perp$; writing $w = Z z$ this reads $Z^t ( I_t - K_S ) Z z = 0$.  Each solution has a fibre of dimension $h - \rank B_S$ in $v$.  Injectivity identifies the Gram nullity with the hull dimension.  If $B_S$ has full column rank and a codeword is supported on $S$, its orthogonality to the hull rows gives $B_S c_S^t = 0$, so it is zero; thus puncturing is injective and $\ker B_S = 0$.
\end{proof}

\begin{rem}\label{rem-cost}
On a regular stratum with $h_s > 0$, the common initial signature is the weight enumerator of a code of dimension $h_s - 1$, so its direct computation enumerates $q^{h_s - 1}$ words, and one coordinate suffices.  For an $\LCD$ member it is given by \cref{cor-signatures}(3) without enumeration.  These are costs of the initial enumeration step only.
\end{rem}

\section{Compatible permutation and monomial equivalence}\label{sec-compatible-equivalence}

Throughout the rest of this section $H$ is a finite abelian group with $\exp (H) \mid q - 1$, acting on the coordinate set $H$ by its regular representation $\rho$, and $\F_q^H = \bigoplus_{\chi \in \widehat H} V_\chi$ is the decomposition into isotypic lines.  For $T \subseteq \widehat H$ put $C_T = \bigoplus_{\chi \in T} V_\chi$.  On a regular stratum, $\CC_s = C_{T_s}$ with $H = H_m$, indexed as in \cref{prop-grid}, and $\widehat H = \Z_m \times \Z_n$.  We write $\widehat H$ additively.  For $b \in H$ and $\alpha \in \Aut (H)$ let $P_{b, \alpha}$ be the permutation matrix of $x \mapsto b + \alpha (x)$, and for $\psi \in \widehat H$ let $D_\psi = \diag ( \psi (x) )_{x \in H}$.  Let $\alpha^*$ be the automorphism of $\widehat H$ induced by $\alpha$.

It is classical that the normalizer of $\rho (H)$ in the symmetric group on $H$ is the holomorph $\{ P_{b, \alpha} : b \in H, \ \alpha \in \Aut (H) \} \simeq H \rtimes \Aut (H)$: if $\pi$ normalizes $\rho (H)$, then $\alpha = t_{\pi (0)}^{-1} \pi$ fixes $0$ and conjugates each translation $t_x$ to a translation $t_{\phi (x)}$, and evaluating at $0$ shows $\alpha = \phi \in \Aut (H)$.

\begin{defn}\label{def-monomial-normalizer}
The projective monomial normalizer of $H$ is
\[
\Nmon (H) = \{ M \in \Mon_N ( \F_q ) : M^{-1} \rho (h) M \in \F_q^* \rho (H) \text{ for all } h \in H \} .
\]
Two codes $C_T$ and $C_{T'}$ are $H$-compatibly permutation equivalent if $C_{T'} = C_T P$ for a permutation matrix $P \in \Nmon (H)$, and $H$-compatibly linearly equivalent if $C_{T'} = C_T M$ for some $M \in \Nmon (H)$.
\end{defn}

\begin{thm}\label{thm-normalizer}
Assume $\exp (H) \mid q - 1$.  Every element of $\Nmon (H)$ has a unique expression $M = a P_{b, \alpha} D_\psi$ with $a \in \F_q^*$, $b \in H$, $\alpha \in \Aut (H)$ and $\psi \in \widehat H$, every such matrix lies in $\Nmon (H)$, and for every $T \subseteq \widehat H$
\[
C_T \, P_{b, \alpha} D_\psi = C_{\alpha^* (T) + \psi} .
\]
\end{thm}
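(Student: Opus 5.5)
The plan is to split a monomial matrix into its permutation and diagonal parts and treat the conjugation condition separately for each part. I will write $M = P_\pi D_w$, where $P_\pi$ is the permutation matrix of a bijection $\pi$ of $H$ and $D_w = \diag ( w(x) )_{x \in H}$ with $w : H \to \F_q^*$; this factorization is unique. Throughout I use the convention that $P_\pi$ sends the row vector $v$ to $vP_\pi$ with $( v P_\pi )_{\pi (x)} = v_x$. Since $\exp (H) \mid q - 1$, we have $p \nmid | H |$, and the $| H |$ characters $\chi : H \to \F_q^*$ are distinct. They therefore form a basis of $\F_q^H$, and $V_\chi = \F_q \chi$, where $\chi$ also denotes the row vector $( \chi (x) )_x$.

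\emph{Step 1 (permutation part).} For $h \in H$, the monomial matrix $M^{-1} \rho (h) M$ has underlying permutation $\pi^{-1} t_h \pi$, where $t_h$ is translation by $h$ (up to the inversion dictated by the row-vector convention). A scalar multiple $a \rho (h')$ has underlying permutation $t_{h'}$. The condition $M \in \Nmon (H)$ therefore says exactly that $\pi$ normalizes the translation group $\rho (H)$ in $\Sym (H)$. By the classical holomorph description recalled before \cref{def-monomial-normalizer}, this gives $\pi = x \mapsto b + \alpha (x)$ with $b = \pi (0)$ and $\alpha \in \Aut (H)$ uniquely determined.

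\emph{Step 2 (diagonal part).} Since $P_{b, \alpha}$ already normalizes $\rho (H)$, the remaining condition is that $D_w^{-1} \rho (h') D_w \in \F_q^* \rho (h')$ for every $h'$. Entrywise, this means $w ( x + h' ) / w (x)$ is independent of $x$; call it $c (h')$. Setting $a = w (0)$ and $\psi = w / w(0)$, we get $\psi (0) = 1$ and $\psi ( x + h' ) = c (h') \psi (x)$. Evaluating at $x = 0$ gives $c = \psi$, so $\psi$ is a homomorphism $H \to \F_q^*$, i.e.\ $\psi \in \widehat H$. Since the scalar $a$ commutes with $P_{b,\alpha}$, this yields $M = a P_{b, \alpha} D_\psi$, and uniqueness follows because $a$, $\psi$, $b$ and $\alpha$ were all read off from $M$. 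For the converse, direct computation gives $D_\psi^{-1} \rho (h) D_\psi = \psi (h)^{\pm 1} \rho (h)$ and $P_{b, \alpha}^{-1} \rho (h) P_{b, \alpha} = \rho ( \alpha^{\pm 1} (h) )$, so every $a P_{b, \alpha} D_\psi$ lies in $\Nmon (H)$.

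\emph{Step 3 (action on codes).} For a character $\chi$,
\[
( \chi P_{b, \alpha} )_y = \chi \bigl( \alpha^{-1} ( y - b ) \bigr) = \chi ( - \alpha^{-1} b ) \, ( \chi \circ \alpha^{-1} ) (y) ,
\]
which is a nonzero scalar multiple of the character $\chi \circ \alpha^{-1}$. I will define $\alpha^* \chi = \chi \circ \alpha^{-1}$; this is the choice that makes $\alpha \mapsto \alpha^*$ covariant. Right multiplication by $D_\psi$ is pointwise multiplication by $\psi$, so the result spans $V_{\alpha^* \chi + \psi}$ in additive notation. Because $\alpha^*$ and $\chi \mapsto \chi + \psi$ are bijections of $\widehat H$, summing over $\chi \in T$ gives $C_T P_{b, \alpha} D_\psi = C_{\alpha^* (T) + \psi}$.

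I expect the main obstacle to be bookkeeping rather than substance: the conventions for $P_\pi$ acting on row vectors, for the direction of conjugation in $\Nmon (H)$, and for $\alpha^*$ must be fixed consistently. Otherwise Step 1 produces $\alpha^{-1}$ where $\alpha$ is wanted, or Step 3 produces $\chi \circ \alpha$ instead of $\alpha^* \chi$. I will fix the convention once, at the start of Step 3, and make the choice of $\alpha^*$ follow from it. The one conceptual point to check carefully in Step 2 is that the scalar $c (h')$ is forced to equal $\psi (h')$ after the normalization $\psi (0) = 1$.
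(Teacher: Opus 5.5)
Your proposal is correct and follows the paper's proof. You show that the permutation part must normalize $\rho(H)$, so it is some $P_{b,\alpha}$ by the holomorph description; the diagonal ratios $w(x+h)/w(x)$ must be independent of $x$, which yields a character; and you then compute the action on the character lines. The differences are cosmetic: you normalize by $w(0)$ instead of first proving $c$ multiplicative, and you handle $P_{b,\alpha}$ in a single computation, fixing $\alpha^*\chi=\chi\circ\alpha^{-1}$ explicitly where the paper leaves the convention implicit, rather than splitting it into a translation and $P_{0,\alpha}$.
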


\begin{proof}
Write $M = P D$ with $P$ a permutation matrix and $D = \diag ( d_x )_{x \in H}$.  The underlying permutation of $M^{-1} \rho (h) M$ is that of $P^{-1} \rho (h) P$, so $P$ normalizes $\rho (H)$ and $P = P_{b, \alpha}$.  Conjugating $\rho (h)$ by $D$ multiplies the entry at $x$ by $d_{x+h} / d_x$, and the hypothesis forces $d_{x + h} / d_x = c (h)$ to be independent of $x$.  Then $c ( h + h' ) = c (h) c (h')$, so $c \in \widehat H$ and $d_x = d_0 \, c (x)$, that is $D = d_0 D_c$.  Conversely $D_\psi \rho (h) D_\psi^{-1} = \psi (h) \rho (h)$.  The permutation part determines $b$ and $\alpha$, and then $a = d_0$ and $\psi$, so the expression is unique.  For the action on $C_T$, the translation $P_{b, \mathrm{id}}$ acts on $V_\chi$ by the scalar $\chi (b)$ and preserves every $C_T$; the permutation $P_{0, \alpha}$ carries $V_\chi$ onto $V_{\alpha^* ( \chi )}$; and the coordinatewise product of a $\chi$-eigenvector with $( \psi (x) )_x$ is a $( \chi + \psi )$-eigenvector, so $D_\psi$ carries $V_\chi$ onto $V_{\chi + \psi}$.
\end{proof}

\begin{thm}
\label{thm-twists}
Let $T \subseteq \widehat H$ have $k$ elements, and for $\psi \in \widehat H$ put $h_\psi = \dim \Hull ( C_{T + \psi} )$.  Then
\[
h_\psi = k - | T \cap ( - T - 2 \psi ) | ,
\]
and $C_{T + \psi}$ is $\LCD$ if and only if $T + 2 \psi = - T$, self-orthogonal if and only if $T \cap ( - T - 2 \psi ) = \varnothing$.  For each coset $\gamma \in \widehat H / 2 \widehat H$ let $k_\gamma = | T \cap \gamma |$, and let $t = | \widehat H [2] |$.  Then
\[
\frac{1}{N} \sum_{\psi \in \widehat H} h_\psi = k - \frac{t}{N} \sum_{\gamma \in \widehat H / 2 \widehat H} k_\gamma^2 ,
\]
which equals $k - k^2 / N$ when $N$ is odd.  If some twist is $\LCD$, the number of $\psi$ with $C_{T + \psi}$ $\LCD$ is $t \, | \Per (T) \cap 2 \widehat H |$, where $\Per (T) = \{ a \in \widehat H : T + a = T \}$.
\end{thm}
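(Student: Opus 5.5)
The plan is to reduce everything to the hull formula for character codes, in the form used in \cref{eq-character}: for any $S \subseteq \widehat H$, the Euclidean dual of $C_S$ is $C_{\widehat H \setminus (-S)}$. This holds because $\langle V_\chi , V_{\chi'} \rangle = 0$ unless $\chi + \chi' = 0$, and the pairing $V_\chi \times V_{-\chi} \to \F_q$ is nondegenerate, since $\sum_x \chi(x)\chi(-x) = N \neq 0$ (note $p \nmid N$ because $\exp(H) \mid q-1$). Hence $\Hull (C_S) = C_{S \setminus (-S)}$ and $\dim \Hull (C_S) = |S| - |S \cap (-S)|$. Applying this with $S = T + \psi$, I get $|S \cap (-S)| = |(T + \psi) \cap (-T - \psi)| = |T \cap (-T - 2\psi)|$ after translating by $-\psi$, which gives the formula for $h_\psi$. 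The code is $\LCD$ iff $S = -S$, i.e.\ $T + 2\psi = -T$. It is self-orthogonal iff $S \cap (-S) = \varnothing$, which is the stated condition.

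For the average, I would write $|T \cap (-T - 2\psi)| = \#\{(\chi,\chi') \in T^2 : \chi + \chi' = -2\psi\}$ and sum over $\psi$. The map $\psi \mapsto -2\psi$ has image $2\widehat H$, and each fibre has size $t = |\widehat H[2]|$. So $\sum_\psi |T \cap (-T-2\psi)| = t \cdot \#\{(\chi,\chi') \in T^2 : \chi + \chi' \in 2\widehat H\}$. The condition $\chi + \chi' \in 2\widehat H$ means that $\chi$ and $-\chi'$ lie in the same coset of $2\widehat H$. The key observation is that $\widehat H / 2\widehat H$ is elementary $2$-abelian, so $-\chi' \equiv \chi'$ there. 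Hence the pair count is $\sum_\gamma k_\gamma^2$, and dividing by $N$ gives the average. When $N$ is odd, $t = 1$ and $2\widehat H = \widehat H$, so there is a single coset and the average is $k - k^2/N$.

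For the count of $\LCD$ twists, suppose $\psi_0$ satisfies $T + 2\psi_0 = -T$. Then $T + 2\psi = -T$ iff $T + 2\psi = T + 2\psi_0$, iff $2(\psi - \psi_0) \in \Per (T)$. So the solution set is a coset of $\{\phi : 2\phi \in \Per(T)\}$, the preimage of $\Per (T) \cap 2\widehat H$ under doubling. That preimage has size $t\,|\Per(T) \cap 2\widehat H|$.

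The main obstacle I expect is bookkeeping rather than ideas: the fibre-counting in the average step, specifically justifying that the coset condition is symmetric via the $2$-torsion quotient. Everything else is a direct translation argument.
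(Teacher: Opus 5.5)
Your proposal is correct and follows the paper's proof essentially step for step. It uses the translation $(T+\psi)\cap(-T-\psi)=\psi+\bigl(T\cap(-T-2\psi)\bigr)$ via \cref{eq-character}, then counts ordered pairs in $T^2$ with sum in $2\widehat H$ (using that $\widehat H/2\widehat H$ has exponent two), and finally takes the preimage of $\Per(T)\cap 2\widehat H$ under doubling for the $\LCD$ count. Your only addition is rederiving $\Hull(C_S)=C_{S\setminus(-S)}$ for an arbitrary character code from orthogonality of characters, which the paper simply takes from \cref{eq-character}.
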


\begin{proof}
By \cref{eq-character}, $h_\psi = | T + \psi | - | ( T + \psi ) \cap ( - T - \psi ) |$, and $( T + \psi ) \cap ( - T - \psi ) = \psi + \bigl( T \cap ( - T - 2 \psi ) \bigr)$; this gives the formula and the two criteria.  For each ordered pair $( u, v ) \in T^2$ the equation $2 \psi = - u - v$ has $t$ solutions if $u + v \in 2 \widehat H$ and none otherwise, and since $\widehat H / 2 \widehat H$ has exponent two, $u + v \in 2 \widehat H$ says that $u$ and $v$ lie in the same coset.  Summing $| T \cap ( - T - 2 \psi ) |$ over $\psi$ therefore gives $t \sum_\gamma k_\gamma^2$.  If $\psi_0$ is an $\LCD$ twist, then $\psi$ is one exactly when $2 ( \psi - \psi_0 ) \in \Per (T)$; the doubling map has kernel of size $t$ and image $2 \widehat H$, so the inverse image of $\Per (T) \cap 2 \widehat H$ has $t \, | \Per (T) \cap 2 \widehat H |$ elements.
\end{proof}

On the row $q = 11$, $(n, m, d) = (2, 5, 5)$ of \cref{tab-regular}, translation by $\psi = (1, 0)$ carries the profile $0, 1, 1, 2, 2, 1, 2, 1, 2, 2, 1, 1, 0, \dots$ of $\dim \Hull ( \CC_s )$ to $1, 2, 2, 1, 2, 1, 2, 2, 1, 1, 0, \dots$, and at $s = 4$ exactly two of the ten twists are $\LCD$.  A diagonal factor of a monomial equivalence therefore changes the support-splitting exponent, while a permutation factor does not.

\begin{cor}\label{cor-compatible-classification}
Assume $\exp (H) \mid q - 1$ and let $T, T' \subseteq \widehat H$.  Then $C_T$ and $C_{T'}$ are $H$-compatibly permutation equivalent if and only if $T' = \alpha^* (T)$ for some $\alpha \in \Aut (H)$, and $H$-compatibly linearly equivalent if and only if $T' = \alpha^* (T) + \psi$ for some $\alpha \in \Aut (H)$ and $\psi \in \widehat H$.  Moreover
\[
N_{\Sym (H)} ( \rho (H) ) \cap \PAut ( C_T ) = H \rtimes \Stab_{\Aut (H)} (T) ,
\]
and the numbers of defining sets $H$-compatibly permutation, respectively linearly, equivalent to $T$ are
\[
o_P (T) = \frac{ | \Aut (H) | }{ | \Stab_{\Aut (H)} (T) | } , \qquad o_M (T) = \frac{ N \, | \Aut (H) | }{ | \Stab_{\widehat H \rtimes \Aut (H)} (T) | } .
\]
\end{cor}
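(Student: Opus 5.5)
The whole argument reads the corollary off \cref{thm-normalizer}, which describes $\Nmon (H)$ completely: every element is uniquely $a P_{b, \alpha} D_\psi$, and it acts on defining sets by $T \mapsto \alpha^* (T) + \psi$.

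\emph{Compatible permutation equivalence.} The permutation matrices in $\Nmon (H)$ are those with $\psi = 0$ and $a = 1$. Indeed, $a P_{b,\alpha} D_\psi$ is a permutation matrix exactly when every entry $a \psi (x)$ equals $1$, and evaluating at $x = 0$ forces $a = 1$ and then $\psi = 0$. So the compatible permutations are the $P_{b, \alpha}$, and $C_T P_{b,\alpha} = C_{\alpha^*(T)}$. Since the isotypic lines $V_\chi$ are independent, $C_{T'} = C_{T''}$ holds if and only if $T' = T''$. Hence $C_T$ and $C_{T'}$ are compatibly permutation equivalent if and only if $T' = \alpha^*(T)$ for some $\alpha \in \Aut (H)$.

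\emph{Compatible linear equivalence.} The scalar $a$ does not change the code, so the same reasoning gives $T' = \alpha^*(T) + \psi$ for some $\alpha$ and $\psi$.

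\emph{Normalizer inside $\PAut (C_T)$.} By the classical holomorph description recalled before \cref{def-monomial-normalizer}, $N_{\Sym(H)}(\rho(H))$ consists of the $P_{b, \alpha}$, and $(b, \alpha) \mapsto P_{b,\alpha}$ identifies it with $H \rtimes \Aut(H)$. Such an element preserves $C_T$ if and only if $\alpha^*(T) = T$, and $b$ imposes no condition. The stabilizer of $C_T$ is therefore the preimage of $\Stab_{\Aut(H)}(T)$, namely $H \rtimes \Stab_{\Aut(H)}(T)$. Here one should note that $\alpha \mapsto \alpha^*$ is an isomorphism $\Aut(H) \to \Aut(\widehat H)$, because $\exp(H) \mid q-1$ makes $\widehat H \cong H$ canonically dual. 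This lets us speak of stabilizers of $T$ in $\Aut(H)$ unambiguously.

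\emph{Orbit counts.} Let $\Aut(H)$ act on subsets of $\widehat H$ via $\alpha^*$. The orbit–stabilizer theorem gives $o_P(T)$ directly. For $o_M$, the group $\widehat H \rtimes \Aut(H)$ acts on subsets by $(\psi, \alpha) \cdot T = \alpha^*(T) + \psi$.

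The one point needing care is checking that this formula is a genuine group action of the semidirect product. Composing $(\psi,\alpha)$ after $(\psi',\alpha')$ gives $\alpha^*(\alpha'^*(T) + \psi') + \psi = (\alpha\alpha')^*(T) + \alpha^*(\psi') + \psi$. This is the multiplication rule of $\widehat H \rtimes \Aut(H)$, with $\Aut(H)$ acting on $\widehat H$ through $\alpha^*$, possibly after replacing $\alpha$ by its inverse-transpose convention. The group has order $|\widehat H|\,|\Aut(H)| = N |\Aut(H)|$, and orbit–stabilizer gives $o_M(T)$.
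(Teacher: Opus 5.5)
Your proposal is correct and follows the paper's own proof, which likewise reads everything off \cref{thm-normalizer}: the permutation matrices in $\Nmon(H)$ are the $P_{b,\alpha}$, translations preserve every $C_T$, and the two counts come from the orbit--stabilizer theorem. You also fill in details the paper leaves implicit: that a permutation matrix $aP_{b,\alpha}D_\psi$ forces $a=1$ and $\psi=0$, that $T\mapsto C_T$ is injective, and that $(\psi,\alpha)\cdot T=\alpha^*(T)+\psi$ is a genuine action of a group of order $N\,|\Aut(H)|$. Only the phrase ``canonically dual'' is loose, since $H\cong\widehat H$ is not canonical; what you actually use, namely $|\widehat H|=N$ and bijectivity of $\alpha\mapsto\alpha^*$, does follow from $\exp(H)\mid q-1$.
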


\begin{proof}
The permutation matrices in $\Nmon (H)$ are the $P_{b, \alpha}$, and \cref{thm-normalizer} gives $C_T P_{b, \alpha} = C_{\alpha^* (T)}$ and $C_T P_{b, \alpha} D_\psi = C_{\alpha^* (T) + \psi}$.  Every translation preserves every $C_T$, and the orbit counts are the orbit-stabilizer theorem.
\end{proof}

Fix a total order on $\widehat H$ and extend it lexicographically to subsets; put
\[
\Can_P (T) = \min_{\alpha \in \Aut (H)} \alpha^* (T) , \qquad
\Can_M (T) = \min_{\alpha \in \Aut (H), \ \psi \in \widehat H} \bigl( \alpha^* (T) + \psi \bigr) .
\]
By \cref{cor-compatible-classification}, $C_T$ and $C_{T'}$ are $H$-compatibly permutation (respectively linearly) equivalent if and only if $\Can_P (T) = \Can_P ( T' )$ (respectively $\Can_M (T) = \Can_M ( T' )$), and a minimizing $\alpha$, or pair $( \alpha, \psi )$, gives an explicit equivalence.  Two distinct members of one filtration are never equivalent in either sense, since $T_s \subsetneq T_{s'}$ whenever $\CC_s \neq \CC_{s'}$ and $s < s'$.

\begin{thm}
\label{thm-algorithm}
Let $H \simeq \Z_m \times \Z_n$ with $N = m n$ and $\exp (H) \mid q - 1$.  Given generator matrices of two $H$-invariant codes together with the regular coordinate identifications and the character pairing, an $H$-compatible permutation equivalence can be decided and constructed in $O ( N^3 )$ operations, and an $H$-compatible linear equivalence in $O ( N^4 )$ operations, counting field operations, group operations and operations on $O ( \log N )$-bit indices at unit cost.
\end{thm}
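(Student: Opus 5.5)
The plan is to reduce both decisions to computations on defining sets and then invoke \cref{cor-compatible-classification}.

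\emph{Step 1: recover the defining sets.} Given a generator matrix $G$ of an $H$-invariant code $C$ (at most $N$ rows), together with the regular identification of coordinates with $H$ and the character pairing, I would form the $N \times N$ character table $X = ( \chi (x) )_{\chi \in \widehat H, \, x \in H}$. Its rows form a basis of $\F_q^H$ adapted to the lines $V_\chi$. Because $\exp (H) \mid q - 1$ and $p \nmid N$, the inverse is $X^{-1} = N^{-1} \overline X^{t}$, where the bar denotes $\chi \mapsto -\chi$. Since $C = C_T$ is a sum of isotypic lines, I would compute $G X^{-1}$, at cost $O ( N^3 )$ for at most $N$ rows. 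Then $\chi \in T$ if and only if the column of $G X^{-1}$ indexed by $\chi$ is nonzero. Indeed, each codeword has its coordinates in the character basis supported on $T$, and the projection onto every $V_\chi$ with $\chi \in T$ is surjective. This yields $T$ and $T'$ in $O ( N^3 )$.

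\emph{Step 2: search over $\Aut (H)$.} For $H \simeq \Z_m \times \Z_n$, an automorphism $\alpha^*$ of $\widehat H$ is determined by the images of two generators. So $| \Aut (H) | \leq N^2$, and the automorphisms can be enumerated by testing candidate image pairs $(u, v) \in \widehat H^2$. A pair is accepted if it has the correct orders and generates, which can be checked by computing the subgroup they generate or the induced map. Done naively this costs $O ( N )$ per pair, giving $O ( N^3 )$ overall. For each accepted $\alpha$, I would compute $\alpha^* (T)$ in $O ( N )$ and compare it with $T'$ using a Boolean array indexed by $\widehat H$, again in $O ( N )$. The total is $O ( N^3 )$. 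By \cref{cor-compatible-classification}, a match $\alpha^* (T) = T'$ is exactly an $H$-compatible permutation equivalence, and $P_{0, \alpha}$ is an explicit witness.

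\emph{Step 3: the linear case.} Here I would additionally loop over $\psi \in \widehat H$ and test $\alpha^* (T) + \psi = T'$, at cost $O ( N )$ per pair $( \alpha, \psi )$. This gives $O ( N^2 \cdot N \cdot N ) = O ( N^4 )$. \Cref{thm-normalizer} then provides the explicit monomial matrix $P_{0, \alpha} D_\psi$. A cheaper shortcut exists, since $\psi$ can be pinned down by matching minimal elements, but it is not needed for the stated bound.

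\emph{Expected obstacle.} The delicate point is the bound $| \Aut (H) | \leq N^2$ together with efficient enumeration of $\Aut (H)$. One must check that accepting pairs $( u, v )$ with $\operatorname{ord} (u) \mid m$ and $\operatorname{ord} (v) \mid n$, with the induced map bijective, gives exactly the automorphisms, and that the bijectivity test fits inside the cost budget. To keep within budget I would test bijectivity by computing the kernel: by the first isomorphism theorem, the map is bijective iff $\{ (i, j) : i u + j v = 0 \} = \{ 0 \}$. Scanning the $N$ pairs $(i, j)$ once per candidate costs $O ( N )$, which preserves the $O ( N^3 )$ total.
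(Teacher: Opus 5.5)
Your proposal is correct and follows the paper's proof essentially step for step: you read off $T$ and $T'$ from the Fourier coefficients of the generator rows in $O(N^3)$, enumerate the at most $N^2$ automorphisms through the images of two generators with an $O(N)$ injectivity test, compare $\alpha^*(T)$ (and, in the linear case, its $N$ translates) with $T'$ in $O(N)$, and obtain the witness $P_{0,\alpha}$ or $P_{0,\alpha}D_\psi$ from \cref{thm-normalizer}. The only point the paper makes explicit that you leave implicit is that, once the automorphism of $\widehat H$ is found, the corresponding $\alpha \in \Aut(H)$ is recovered through the character pairing in order to write down $P_{0,\alpha}$; the paper also re-checks the row-space identity by Gaussian elimination in $O(N^3)$, which stays within the bound but is not logically needed.
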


\begin{proof}
The coefficient of a generator row $v$ on the character $\chi$ is $\widehat v ( \chi ) = \frac{1}{N} \sum_{x \in H} v_x \chi ( - x )$; computing these for at most $N$ rows costs $O ( N^3 )$, and since the code is $H$-invariant the union of the supports of the $\widehat v$ is its defining set.  An endomorphism of $\Z_m \times \Z_n$ is determined by the images of the two standard generators, which gives at most $N^2$ candidates; checking the two order relations and enumerating the $N$ images tests injectivity, so all automorphisms and their tables are listed in $O ( N^3 )$ operations.  Applying each to an indicator vector and comparing with the target costs $O (N)$, which gives the permutation bound; trying the $N$ translations of each image gives $O ( N^4 )$.  Every automorphism of $\widehat H$ is the dual of a unique automorphism of $H$, recovered through the character pairing, so a successful image yields $P_{0, \alpha}$ or $P_{0, \alpha} D_\psi$ as in \cref{thm-normalizer}, and the row-space identity is checked by Gaussian elimination in $O ( N^3 )$ operations.
\end{proof}

\begin{prop}\label{prop-order-counts}
If $C_T$ and $C_{T'}$ are $H$-compatibly permutation equivalent, then for every integer $r$ the sets $T$ and $T'$ contain the same number of elements of order $r$.
\end{prop}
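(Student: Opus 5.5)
By \cref{cor-compatible-classification}, if $C_T$ and $C_{T'}$ are $H$-compatibly permutation equivalent then $T' = \alpha^* (T)$ for some $\alpha \in \Aut (H)$. So the statement reduces to one fact: the induced map $\alpha^*$ is a group automorphism of $\widehat H$, and group automorphisms preserve element orders. Once this is in place, $\alpha^*$ restricts to a bijection from $T$ onto $T'$ that sends elements of order $r$ to elements of order $r$. Hence $|\{ \chi \in T : \operatorname{ord} \chi = r \}| = |\{ \chi' \in T' : \operatorname{ord} \chi' = r \}|$ for every $r$.

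The steps, in order, are as follows.

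First, I recall that $\alpha^*$ is defined by $\alpha^* (\chi) = \chi \circ \alpha^{-1}$. With this convention, $P_{0,\alpha}$ carries $V_\chi$ onto $V_{\alpha^*(\chi)}$, which is the convention used in \cref{thm-normalizer}. The map $\alpha^*$ is additive on $\widehat H$, and its inverse is $(\alpha^{-1})^*$, so it is an automorphism of $\widehat H$.

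Second, for any group automorphism $\beta$ and any $\chi$, we have $r \chi = 0$ if and only if $r \beta(\chi) = 0$, because $r\beta(\chi) = \beta(r\chi)$ and $\beta$ is injective. Therefore $\operatorname{ord} \beta(\chi) = \operatorname{ord} \chi$.

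Third, I partition $T$ according to order and apply the bijection $\alpha^*|_T : T \to T'$ to each part.

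The only point needing care is the first step. One must make sure that the compatibility classification produces an honest automorphism of $\widehat H$, rather than merely a bijection of $\widehat H$. This is guaranteed by \cref{thm-normalizer}, since $\alpha \in \Aut(H)$. I expect no real obstacle beyond this.

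It is also worth remarking that the statement is only as strong as the hypothesis of compatibility. For a general permutation equivalence the labelled defining set is not an invariant, so this counting invariant applies only to the compatible classification.
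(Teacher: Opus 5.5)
Your proposal is correct and follows the paper's own argument: invoke \cref{cor-compatible-classification} to get $T' = \alpha^*(T)$, then note that the group automorphism $\alpha^*$ of $\widehat H$ preserves element orders. Whether you take $\alpha^*(\chi) = \chi \circ \alpha^{-1}$ or $\chi \circ \alpha$ does not matter, since either choice is an automorphism of $\widehat H$.
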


\begin{proof}
By \cref{cor-compatible-classification}, $T' = \alpha^* (T)$, and $\alpha^*$ preserves orders.
\end{proof}

\section{Automorphisms of the filtration and unrestricted equivalence}\label{sec-rigidity}

The compatible classification applies to an unrestricted permutation equivalence exactly under the following transport condition.

\begin{lem}\label{lem-transport}
Let $H \leq \PAut (C) \cap \PAut ( C' )$ act regularly on the coordinates and let $C' = C P$.  There is an $H$-compatible permutation equivalence from $C$ to $C'$ if and only if $P^{-1} H P$ and $H$ are conjugate in $\PAut ( C' )$.
\end{lem}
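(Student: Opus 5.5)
Both directions come down to one observation about conjugation. Write $C' = CP$ and identify permutations with permutation matrices acting on row vectors on the right. For any $\pi$ in the symmetric group on the coordinates, $CP\pi = C'\pi$, so $C' \pi = C'$ exactly when $P\pi P^{-1}$ preserves $C$. Hence $\PAut(C') = P^{-1}\PAut(C)P$. In particular $P^{-1}HP \leq \PAut(C')$, so the conjugacy condition in the statement makes sense inside $\PAut(C')$. By \cref{def-monomial-normalizer}, a permutation matrix $Q$ lies in $\Nmon(H)$ exactly when $Q^{-1}\rho(h)Q \in \F_q^*\rho(H)$ for all $h \in H$. For a permutation matrix the scalar must be $1$, because a scalar multiple of a permutation matrix has its nonzero entries equal to that scalar. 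So the $H$-compatible permutation equivalences are exactly the permutations $Q$ with $C' = CQ$ and $Q^{-1}HQ = H$.

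For the direction from a compatible equivalence to conjugacy, assume $Q$ is such an equivalence. Put $g = P^{-1}Q$. Then $C'g = CPP^{-1}Q = CQ = C'$, so $g \in \PAut(C')$. Moreover
\[
g^{-1}(P^{-1}HP)g = Q^{-1}PP^{-1}HPP^{-1}Q = Q^{-1}HQ = H,
\]
so $P^{-1}HP$ and $H$ are conjugate in $\PAut(C')$.

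For the converse, assume $g \in \PAut(C')$ satisfies $g^{-1}(P^{-1}HP)g = H$, and put $Q = Pg$. Then $CQ = C'g = C'$, and $Q^{-1}HQ = g^{-1}P^{-1}HPg = H$. So $Q$ normalizes $\rho(H)$, and by the holomorph description recalled before \cref{def-monomial-normalizer} it has the form $P_{b,\alpha}$. In particular it lies in $\Nmon(H)$ and is an $H$-compatible permutation equivalence.

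Nothing here is a real obstacle; the only points needing care are the following.
\begin{itemize}
\item The convention for composing permutations acting on row vectors, and checking that the formula $\PAut(CP) = P^{-1}\PAut(C)P$ uses that convention consistently.
\item The remark that a permutation matrix can normalize $\rho(H)$ only up to the scalar $1$.
\item Reading ``conjugate'' as conjugation by an element $g$ of $\PAut(C')$ in the direction $g^{-1}(P^{-1}HP)g = H$. Either direction of conjugation gives the same condition, since $g$ may be replaced by $g^{-1}$.
\end{itemize}
Regularity of $H$ is needed only so that the paper's identification of the normalizer with the holomorph $\{P_{b,\alpha}\}$ applies.
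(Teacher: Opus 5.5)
Your proof is correct and follows the paper's argument: in one direction the paper takes $PQ$ for $Q \in \PAut(C')$ conjugating $P^{-1}HP$ to $H$, and in the other it takes $P^{-1}M$ for a normalizing equivalence $M$, as you do. Your extra remarks (that a permutation matrix in $\Nmon(H)$ normalizes $\rho(H)$ with scalar $1$, and that $\PAut(C') = P^{-1}\PAut(C)P$) make explicit points the paper leaves implicit; the appeal to the holomorph in the converse is unnecessary, since normalizing $\rho(H)$ already gives membership in $\Nmon(H)$ by definition.
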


\begin{proof}
If $Q \in \PAut ( C' )$ satisfies $Q^{-1} P^{-1} H P Q = H$, then $P Q$ normalizes $H$ and carries $C$ to $C'$.  Conversely, if $M$ is a normalizing equivalence, put $Q = P^{-1} M$; then $C' Q = C'$ and $Q^{-1} P^{-1} H P Q = M^{-1} H M = H$.
\end{proof}

For the rest of this section assume a regular stratum with $d > n$ and $m \geq 2$, write $X_0 = a \mu_m$ and $Y_0 = b \mu_n$ as in \cref{prop-grid}, and use $x$, $y$ also for their coordinate evaluation vectors.  By \cref{prop-grid},
\[
U := \CC_{n / c} = \langle \one, x \rangle , \qquad V := \CC_{d / c} = \langle \one, x, \dots, x^r, y \rangle , \qquad r = \min \{ m - 1, \lfloor d / n \rfloor \} .
\]

\begin{thm}
\label{thm-lowdegree}
The permutation automorphism group of $U$ is
\[
\PAut (U) = ( S_n )^m \rtimes \mu_m , \qquad | \PAut (U) | = ( n ! )^m m ,
\]
where the $m$ copies of $S_n$ permute the ordinates in the $m$ fibres of $x$ and $\mu_m$ rotates the fibres by multiplication on $X_0$.  If $m \geq 3$, the regular subgroup $H_m$ is not normal in $\PAut (U)$, hence not characteristic.
\end{thm}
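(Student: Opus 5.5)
The plan is to characterize $\PAut(U)$ directly from the two spanning vectors $\one$ and $x$. Since $\one$ is fixed by every permutation, a permutation $\pi$ of $\YY$ preserves $U = \langle \one, x \rangle$ if and only if $x\pi \in U$, that is, $x\pi = \alpha \one + \beta x$ for some $\alpha, \beta \in \F_q$.

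First I determine which affine maps $v \mapsto \alpha + \beta v$ can occur. By \cref{prop-grid}, $x$ takes each value of $X_0 = a\mu_m$ exactly $n$ times. The vector $x\pi$ has the same multiset of values, so $\alpha + \beta X_0 = X_0$ as sets. Since $m \geq 2$, $x$ is nonconstant, which forces $\beta \neq 0$. Summing over the set gives $m\alpha + \beta\sum_{\lambda\in\mu_m} a\lambda = \sum_{\lambda\in\mu_m} a\lambda$. For $m \geq 2$ the $m$-th roots of unity sum to zero, so $m\alpha = 0$. Because $m \mid q-1$, $m$ is invertible in $\F_q$, hence $\alpha = 0$. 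Then $\beta a\mu_m = a\mu_m$ gives $\beta \in \mu_m$.

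It follows that $\pi$ maps the fibre $x^{-1}(a\lambda)$ onto the fibre $x^{-1}(\beta a\lambda)$ and is otherwise arbitrary on fibres. Conversely, every such $\pi$ sends $x$ to $\beta x \in U$. Writing $\pi$ as a fibrewise permutation followed by the fibre rotation $\beta$ exhibits
\[
\PAut(U) = (S_n)^m \rtimes \mu_m .
\]
The normal factor is the kernel of the induced action on the $m$ fibres, and $\mu_m$ embeds through the rotations $\sigma_\beta$. This gives the order $(n!)^m m$.

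For the non-normality claim, note that $H_m \cap (S_n)^m = \langle \tau \rangle$, whose elements act by the same rotation $\zeta_n^k$ in every fibre. Take $g \in (S_n)^m$ equal to a transposition in the fibre over $a$ and the identity elsewhere, and let $\sigma = \sigma_\lambda$ with $\lambda$ of order $m$. The commutator $g\sigma g^{-1}\sigma^{-1}$ lies in $(S_n)^m$. It acts as a transposition on the fibres over $a$ and over $\lambda^{\pm1}a$ (whichever the conjugation moves the support to), and trivially on every other fibre. When $m \geq 3$ there is a third fibre on which it is trivial, while it is nontrivial on some fibre, so it is not a power of $\tau$. Hence $g\sigma g^{-1} \notin H_m$, and $H_m$ is not normal in $\PAut(U)$. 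A characteristic subgroup would be normal, so $H_m$ is not characteristic either.

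The only step needing care is the affine-map argument. The crucial input is that $m$ is invertible mod $p$ and that the $m$-th roots of unity sum to zero; the rest is bookkeeping of fibres.
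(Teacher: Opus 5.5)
Your proof is correct and follows the paper's argument. You kill the constant term in $x\pi=\alpha\one+\beta x$ using that $X_0$ has sum zero and $p\nmid m$, which forces $\beta\in\mu_m$, and you use a single-fibre transposition to move the fibre rotation $\sigma_\lambda$ out of $H_m$, detected by a third untouched fibre. The only cosmetic differences are that you read off fibre-preservation from $x\pi=\beta x$ rather than from equality of columns, and you phrase the last step through the commutator $g\sigma g^{-1}\sigma^{-1}\in (S_n)^m$ compared with $H_m\cap (S_n)^m=\langle\tau\rangle$.
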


\begin{proof}
Two coordinates have the same value for every codeword of $U$ if and only if they have the same abscissa.  This relation is preserved by every permutation automorphism, so every automorphism permutes the $m$ fibres, and the kernel of the action on fibres is $( S_n )^m$.  On the fibre set, preserving $\langle \one, x \rangle$ means that the new abscissa is $A x + B$ with $A \neq 0$.  The set $X_0$ has sum zero, since $m \geq 2$ and its elements are the roots of $X^m - a^m$; summing $A X_0 + B = X_0$ gives $m B = 0$, hence $B = 0$ because $p \nmid m$, and $A X_0 = X_0$ holds exactly when $A \in \mu_m$.  These multiplications lift to coordinate permutations preserving $U$, which gives the split extension.  For $m \geq 3$, let $\varrho$ be the transposition of two ordinates in one fibre.  Conjugating a generator of the fibre rotation by $\varrho$ leaves its action on fibres unchanged, modifies its within-fibre maps at two fibres, and leaves at least one fibre unmodified.  An element of $H_m$ has the same ordinate multiplier at every fibre; the unmodified fibre would force that multiplier to be one, contradicting the modified fibres.  So $\varrho$ does not normalize $H_m$.
\end{proof}

\begin{thm}
\label{thm-filtration}
On a regular stratum with $d > n$ and $m \geq 2$,
\[
\PAut (U) \cap \PAut (V) = H_m , \qquad \bigcap_{s \geq 0} \PAut ( \CC_s ) = H_m , \qquad \bigcap_{s \geq 0} \Maut ( \CC_s ) = \F_q^* H_m .
\]
Between two regular strata over the same $\F_q$ with the same $(n, d, m)$, every permutation carrying the ordered pair $( U, V )$ to $( U', V' )$ is of the form
\[
( x, y ) \longmapsto ( A x, C y ) , \qquad A X_0 = X'_0 , \quad C Y_0 = Y'_0 ,
\]
and each such map carries every $\CC_s$ to $\CC'_s$.
\end{thm}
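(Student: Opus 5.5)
The plan is to reduce everything to the structure of $\PAut (U)$ from \cref{thm-lowdegree} and then let the single extra generator $y$ of $V$ pin down the within-fibre permutations. First I would record what the hypothesis $d > n$ buys. The monomials of weight $\leq n$ are $1$ and $x$, since $y$ has weight $d > n$, so $U = \langle \one, x \rangle$ contains no $y$-term. The monomials of weight $\leq d$ are $1, x, \dots, x^r$ and $y$ (note $xy$ has weight $n + d > d$), so $V$ is as displayed with $r \geq 1$. By \cref{prop-grid}, $y$ is linearly independent from the span of the powers of $x$, and every word of $V$ is uniquely $F(x) + e\, y$ with $\deg F \leq r$.

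For $\PAut (U) \cap \PAut (V) = H_m$, the inclusion $\supseteq$ is the fact $H_m \leq \PAut ( \CC_s )$ from \cref{sec-group}. For $\subseteq$, take $\pi$ in the intersection. By \cref{thm-lowdegree}, $\pi$ permutes the fibres of $x$ by multiplication by some $\lambda \in \mu_m$; composing with the inverse of the corresponding $\sigma_\lambda \in H_m$, I may assume $\pi$ fixes every fibre, i.e.\ $\pi \in ( S_n )^m$. Then $y \circ \pi \in V$, so $y \circ \pi = F(x) + e\, y$. Restrict to the fibre over $x_0 \in X_0$. There the left side is a rearrangement of the values $b \mu_n$, and the right side is $F(x_0) + e\, y$. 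Summing over the fibre and using $\sum_{\zeta \in \mu_n} \zeta = 0$ (as $n \geq 2$) gives $n F(x_0) = 0$, hence $F(x_0) = 0$ because $p \nmid n$. So on every fibre $\pi$ acts as $y \mapsto e\, y$, with $e\, b \mu_n = b \mu_n$, i.e.\ $e \in \mu_n$. The crucial point is that $e$ is one scalar, independent of the fibre, because it is the single coefficient of $y$ in a word of $V$. Hence $\pi = \tau^k$, which lies in $H_m$.

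The two filtration statements then follow quickly. Since $U$ and $V$ are members of the filtration, and $H_m$ lies in every $\PAut ( \CC_s )$ (with $\CC_0 = \langle \one \rangle$ preserved by any permutation), we get $\bigcap_s \PAut ( \CC_s ) = H_m$. For the monomial version, write $M = P D$ with $D$ diagonal. Preservation of $\CC_0$ forces $\one P D = \one D$, the diagonal vector of $D$, to be a multiple of $\one$. So $D = a I$ is scalar, and $P$ then lies in every $\PAut ( \CC_s )$, giving $\F_q^* H_m$.

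For the equivalence statement I would repeat the same argument between two strata, and this is where most of the care lies: keeping the direction of the coordinate action straight. Let $\pi$ carry $U$ to $U'$ and $V$ to $V'$.
\begin{enumerate}
\item The relation ``equal value on every word'' is the fibre relation of $x$ on one side and of $x'$ on the other, so $\pi$ maps fibres to fibres.
\item Hence $x' \circ \pi \in U$ gives $x' \circ \pi = A x + B$. Summing over the roots of $X^m - a^m$ (using $m \geq 2$ and $p \nmid m$) forces $B = 0$ and $A X_0 = X'_0$, exactly as in \cref{thm-lowdegree}.
\item Likewise $y' \circ \pi \in V$ gives $y' \circ \pi = F(x) + C y$, and the fibre-sum argument above forces $F \equiv 0$ on $X_0$ and $C Y_0 = Y'_0$.
\end{enumerate}
Thus $\pi$ is $(x, y) \mapsto (A x, C y)$. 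Such a map sends each monomial $x^i y^j$ to $A^i C^j x^i y^j$, so it carries the span of $B_s$ onto that of $B'_s$, i.e.\ $\CC_s$ onto $\CC'_s$ for every $s$.
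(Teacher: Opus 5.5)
Your proof is correct and is essentially the paper's argument: fibre preservation from $U$, vanishing sums over $a\mu_m$ and $b\mu_n$ to kill the constant term $B$ and the polynomial part $F(x)$, the single global coefficient of $y$ in a word of $V$, and the line $\CC_0 = \langle \one \rangle$ to force the diagonal part of a monomial automorphism to be scalar. The only difference is organizational: you treat the automorphism case first via \cref{thm-lowdegree} and then repeat the argument between two strata, whereas the paper proves the two-strata statement directly and obtains the equalities by specializing to $A \in \mu_m$, $C \in \mu_n$.
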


\begin{proof}
Let $\pi$ carry $( U, V )$ to $( U', V' )$ and pull the target coordinate functions back along $\pi$.  From $U$, $x' \circ \pi = A x + B$ with $A \neq 0$; the fibres of $U$ are preserved by $\pi$, and both $X_0$ and $X'_0$ have sum zero, so summing over the fibres gives $B = 0$ and $A X_0 = X'_0$.  From $V$, $y' \circ \pi = C y + P (x)$ with $P (x) \in \langle \one, x, \dots, x^r \rangle$.  On each source fibre $\pi$ bijects the $n$ points onto a target fibre, whose ordinates sum to zero; the source ordinates also sum to zero, so summing $C y + P (x)$ over the fibre gives $n P (x) = 0$, and $P$ vanishes on $X_0$.  The coefficient $C$ is nonzero because $\pi$ is a bijection and $n \geq 2$, and $C Y_0 = Y'_0$.  Conversely $( x, y ) \mapsto ( A x, C y )$ scales every monomial by a nonzero constant and preserves every monomial span.  For automorphisms $A \in \mu_m$ and $C \in \mu_n$, which gives the first two equalities.  Finally $\CC_0 = \langle \one \rangle$, and a monomial matrix preserving this line has all its nonzero entries equal, hence is a scalar times a permutation; the permutation statement then gives the third equality.
\end{proof}

By \cref{thm-filtration} the codes $\CC_s ( \XX )$ and $\CC_s ( \XX' )$ of two regular strata with the same $(n, d, m)$ are simultaneously permutation equivalent for all $s$, by one explicit permutation.  Within a regular stratum the filtration therefore carries no information about the curve.  The regular action itself is recoverable from the two codes $U$ and $V$.

\begin{prop}
\label{prop-reconstruction}
Suppose generator matrices of the ordered pair $( U, V )$ are given after an unknown common coordinate permutation and arbitrary invertible row operations.  The coordinate action of $H_m$ can be recovered in $O ( N^3 )$ field and index operations.  Given two such pairs with the same $(n, d, m)$, a simultaneous permutation equivalence of the entire filtrations can be constructed in the same bound.
\end{prop}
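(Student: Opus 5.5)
The recovery is constructive, following the proof of \cref{thm-filtration}: from $U$ extract the abscissa function up to an affine change, normalize it with the zero-sum condition, and then extract the ordinate from $V$ the same way.

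\textbf{Step 1: the fibres of $x$.} From the generator matrix of $U$ (rank two), group the $N$ coordinates by equal columns. A column is $(1, x(P))$ up to the unknown row operation. By the first step of the proof of \cref{thm-lowdegree}, the resulting classes are exactly the $m$ fibres of $x$. Hashing or sorting the columns costs $O(N^2)$ or less.

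\textbf{Step 2: recover $x$ up to scaling.} Row-reduce so that the all-one vector is a basis vector of $U$. It is identified as the unique line, up to scalar, constant on all coordinates; it can be found as the vector in the row space whose entries are all equal, by solving a linear system in $O(N^3)$. Take any complementary vector $w = Ax + B\one$ and subtract its mean over the coordinates. Since $X_0$ has zero sum and $p \nmid N$, this gives $\tilde x = Ax$. We cannot recover $A$, but $\tilde x$ takes values in $A X_0 = A a \mu_m$. Multiplication by $\mu_m$ on these values defines a cyclic action on the fibres. Fix a generator $\lambda$ of $\mu_m$; the map "value $v \mapsto \lambda v$" permutes the fibre set and realizes $\sigma_\lambda$ on fibres.

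\textbf{Step 3: recover $y$ up to scaling.} In $V$, reduce modulo the span of $\one, \tilde x, \dots, \tilde x^r$. These are computed coordinatewise, and their span equals $\langle \one, x, \dots, x^r \rangle$. This leaves a vector $Cy + P(x)$. On each fibre, subtract the fibre mean; the ordinates sum to zero, so this yields $\tilde y = Cy$, as in the proof of \cref{thm-filtration}. Its values on each fibre form $Cb\mu_n$. Now define $\tau$ coordinatewise as the map sending the point with $\tilde y = v$ to the point of the same fibre with $\tilde y = \zeta_n v$. Define $\sigma_\lambda$ as the map sending $(\tilde x, \tilde y) = (u, v)$ to $(\lambda u, v)$. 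This is well defined because $\YY = a\mu_m \times b\mu_n$ by \cref{prop-grid}, and the scalars $A$ and $C$ do not affect these multiplicative relations. All of this is Gaussian elimination, $O(N^3)$, plus $O(N)$ table lookups per generator.

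\textbf{Step 4: equivalence of two pairs.} Given a second pair, compute $\tilde x', \tilde y'$ in the same way. Pick base points $P_0$ and $P'_0$, and set $A_0 = \tilde x'(P'_0)/\tilde x(P_0)$ and $C_0 = \tilde y'(P'_0)/\tilde y(P_0)$. Define the permutation by $(\tilde x, \tilde y) \mapsto (A_0\tilde x, C_0\tilde y)$, matched coordinatewise. It is a bijection because both value sets are full cosets $\mu_m$-scaled and $\mu_n$-scaled. It has the form $(x, y) \mapsto (A'x, C'y)$ of \cref{thm-filtration}, and that theorem then says it carries every $\CC_s$ to $\CC'_s$.

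The main obstacle I expect is Step 3, specifically the claim that the quotient of $V$ by the $x$-part is one-dimensional and that the fibre-mean normalization removes $P(x)$. This requires that $y \notin \langle \one, x, \dots, x^r \rangle$ and that $V$ contains no other monomials. Both follow from \cref{prop-grid} with $d > n$. Once this is in place, the remaining steps are routine linear algebra.
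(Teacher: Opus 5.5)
Your proposal is correct and follows the paper's proof essentially step for step: you read off the fibres from equal columns of $U$, subtract the mean from a nonconstant word of $U$ to get a multiple of $x$, subtract fibre means in $V$ to get a multiple of $y$, and match the resulting labels (your scalars $A_0$, $C_0$ amount to the paper's matching of the ratio labels $(\tilde x_i/\tilde x_{i_0}, \tilde y_i/\tilde y_{i_0})$) to obtain the map of \cref{thm-filtration}. The only difference is cosmetic: locating $\one$ in $U$ and reducing $V$ modulo $\langle \one, \tilde x, \dots, \tilde x^r \rangle$ are unnecessary, because the fibre-mean operator already annihilates every function of $x$ and so maps all of $V$ onto $\langle y \rangle$.
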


\begin{proof}
In a full-row-rank generator of $U$, equality of columns partitions the coordinates into the $m$ fibres, and this is unchanged by row operations.  Choose $u \in U \setminus \langle \one \rangle$ and put $\widetilde x = u - \bigl( \frac{1}{N} \sum_i u_i \bigr) \one$; since $\sum_{x \in X_0} x = 0$ and $p \nmid N$, this is a nonzero multiple of $x$.  For a vector $v$ let $( R v )_i = v_i - \frac{1}{n} \sum_{j \text{ in the fibre of } i} v_j$.  Then $R$ kills every function of $x$ and fixes $y$, since the ordinates in each fibre sum to zero, so $R (V) = \langle y \rangle$, and a nonzero vector in it is a multiple $\widetilde y$ of $y$.  The pairs $( \widetilde x_i, \widetilde y_i )$ are distinct; choosing a base coordinate $i_0$ and labelling $i$ by $( \widetilde x_i / \widetilde x_{i_0}, \widetilde y_i / \widetilde y_{i_0} ) \in \mu_m \times \mu_n$ recovers a regular identification, unique up to translation.  Performing this on both pairs and matching labels gives the map of \cref{thm-filtration}.  Gaussian elimination, the fibre averages and the matching fit in the stated bound.
\end{proof}

\paragraph{An unrestricted inequivalence.}
Over $\F_{13}$ the curves $\XX^{(4)} : y^3 = x^4 + 2$ and $\XX^{(8)} : y^3 = x^8 + 2$ both satisfy the regular-stratum criterion of \cref{sec-group} with $m = 4$, $H = \Z_4 \times \Z_3$ and $N = 12$; the curve $\XX^{(8)}$ has $m = 4 < 8 = d$.

\begin{thm}\label{thm-pair}
Let $C^{(4)} = \CC_8 ( \XX^{(4)} )$ and $C^{(8)} = \CC_{11} ( \XX^{(8)} )$.  Both codes have $N = 12$, $k = 6$ and $h = 2$, while
\[
d ( C^{(4)} ) = d ( C^{(4) \perp} ) = 4 , \qquad d ( C^{(8)} ) = d ( C^{(8) \perp} ) = 3 .
\]
Hence the two codes are not monomially equivalent.  Their initial signatures, constant on the respective supports, are
\[
\sigma_{C^{(4)}} (i) = 1 + 12 z^8 , \qquad \sigma_{C^{(8)}} (i) = 1 + 12 z^9 .
\]
\end{thm}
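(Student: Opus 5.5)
The plan is to apply the Cartesian description of \cref{prop-grid} to both curves, read off every quantity in the statement from the two defining sets, and then invoke \cref{thm-distance}, \cref{eq-character} and \cref{cor-signatures}(2). Both curves have $n = 3$, $c = 1$ and $m = 4$, so \cref{prop-grid} gives $\CC_s = C_{T_s}$ with
\[
T_s = \{ (i, j) \in [0, 3] \times [0, 2] : 3 i + d j \leq s \} .
\]

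First compute the two defining sets.
\begin{itemize}
\item For $d = 4$ and $s = 8$, the rows are $j = 0$ with $i \leq 2$, $j = 1$ with $i \leq 1$, and $j = 2$ with $i = 0$. Hence $T^{(4)} = \{ (0,0), (1,0), (2,0), (0,1), (1,1), (0,2) \}$.
\item For $d = 8$ and $s = 11$, the row $j = 0$ is full, the row $j = 1$ has $i \leq 1$, and the row $j = 2$ is empty. Hence $T^{(8)} = \{ (0,0), (1,0), (2,0), (3,0), (0,1), (1,1) \}$.
\end{itemize}
Both sets have six elements and are nonzero and proper, so $k = 6$.

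Next apply \cref{thm-distance}, which also covers the dual since the grid is $a \mu_m \times b \mu_n$.
\begin{itemize}
\item For $T^{(4)}$, the minimum of $(4 - i)(3 - j)$ over $T$ is attained at $(0,2)$ and equals $4$. The minimum of $(i+1)(j+1)$ over the complement is attained at $(3,0)$ and equals $4$.
\item For $T^{(8)}$, the first minimum is $3$, attained at $(3,0)$. The second minimum is $3$, attained at $(0,2)$.
\end{itemize}

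For the hulls, use \cref{eq-character} in $\Z_4 \times \Z_3$.
\begin{itemize}
\item In $T^{(4)}$, the elements $(0,0)$ and $(2,0)$ are self-negative and $(0,1), (0,2)$ form a negative pair. Hence $E^{(4)} = \{ (1,0), (1,1) \}$.
\item In $T^{(8)}$, the row $j = 0$ is closed under negation. Hence $E^{(8)} = \{ (0,1), (1,1) \}$.
\end{itemize}
In both cases $h = 2$.

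Inequivalence follows because a monomial matrix preserves Hamming weights. A monomial equivalence $C' = CM$ therefore preserves $d(C)$. Since $(CM)^\perp = C^\perp (M^{-1})^t$ and $(M^{-1})^t$ is again monomial, it also preserves $d(C^\perp)$. The values $4 \neq 3$ then separate the two codes.

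For the signatures, the regular action of $H_m$ makes $\PAut$ transitive, so \cref{thm-puncture} and \cref{cor-signatures}(2) apply with $h = 2$. The relevant differences are:
\begin{itemize}
\item for $C^{(4)}$, $\chi_1 - \chi_0 = (0,1)$, which has order $r = 3$ and gives the exponent $12 - 12/3 = 8$;
\item for $C^{(8)}$, $\chi_1 - \chi_0 = (1,0)$, which has order $r = 4$ and gives the exponent $12 - 12/4 = 9$.
\end{itemize}
These give $1 + 12 z^8$ and $1 + 12 z^9$, since $q - 1 = 12$.

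The main step needing care is checking that the hypotheses behind \cref{prop-grid} hold here: the regular-stratum criterion of \cref{sec-group} with $m = 4 \mid m(f)$, and the coordinate identification in which $x^i y^j$ evaluates to a nonzero multiple of $\chi_{i,j}$. The statement of the theorem asserts these, but I would confirm them by checking over $\F_{13}$ that exactly one coset $a \mu_4$ has $x^4 + 2$, respectively $x^8 + 2$, a nonzero cube. Everything after that is the finite bookkeeping above.
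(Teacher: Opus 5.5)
Your proposal is correct and follows the paper's proof essentially step by step: the same defining sets from \cref{prop-grid}, the four distances from \cref{thm-distance}, the hull characters from \cref{eq-character}, weight preservation under monomial maps for the inequivalence, and \cref{cor-signatures}(2) with character differences of orders $3$ and $4$. The paper actually carries out the regular-stratum check you defer, and it goes through: the nonzero cubes of $\F_{13}$ are $\{1,5,8,12\}$ and the nonzero fourth powers are $\{1,3,9\}$, so both polynomials take values in $\{3,5,11\}$ on $\F_{13}^*$ with only $5$ a cube, while $f(0)=2$ is not a cube.
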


\begin{proof}
The nonzero cubes of $\F_{13}$ are $\{ 1, 5, 8, 12 \}$ and the nonzero fourth powers are $\{ 1, 3, 9 \}$.  For both polynomials the values at nonzero arguments lie in $\{ 3, 5, 11 \}$, of which only $5$ is a cube, and the value at $0$ is $2$, not a cube.  The grids are $X_0 = \{ 2, 3, 10, 11 \}$, $Y_0 = \{ 7, 8, 11 \}$ for $\XX_4$ and $X_0 = \{ 4, 6, 7, 9 \}$, $Y_0 = \{ 7, 8, 11 \}$ for $\XX_8$, cosets of $\mu_4$ and $\mu_3$.  The defining sets are
\[
\begin{split}
T_4 &= \{ (0, 0), (0, 1), (0, 2), (1, 0), (1, 1), (2, 0) \} , \\
T_8 &= \{ (0, 0), (0, 1), (1, 0), (1, 1), (2, 0), (3, 0) \} ,
\end{split}
\]
with $T_4 \setminus ( - T_4 ) = \{ (1, 0), (1, 1) \}$ and $T_8 \setminus ( - T_8 ) = \{ (0, 1), (1, 1) \}$, so both hulls have dimension two.  \cref{thm-distance} gives the four distances; the words $( y - 7 ) ( y - 8 )$ on $\XX^{(4)}$ and $( x - 4 ) ( x - 6 ) ( x - 7 )$ on $\XX^{(8)}$ attain the two minimum distances of the codes.  Monomial matrices preserve Hamming weight, so the codes are inequivalent.  The difference of the two hull characters has order $3$ for $T_4$ and order $4$ for $T_8$, and \cref{cor-signatures}(2) gives the exponents $12 - 12/3 = 8$ and $12 - 12/4 = 9$.
\end{proof}

The set $T_4$ contains two elements of order $3$ and one of order $4$, whereas $T_8$ contains one of order $3$ and two of order $4$; by \cref{prop-order-counts} this already excludes an $H$-compatible permutation equivalence, and an enumeration of the orbit of $T_4$ under $\widehat H \rtimes \Aut (H)$ excludes an $H$-compatible linear equivalence.  \cref{thm-pair} removes the compatibility restriction: within either code the initial signature cannot separate coordinates, but its value separates the two codes, and the distances separate them under monomial equivalence.

\paragraph{Full automorphism groups.}
\cref{tab-orbits} records, for representative regular-stratum codes, the dimension, the hull dimension, the minimum distance of \cref{thm-distance}, the common punctured-hull dimension of \cref{thm-puncture}, the stabilizer and orbit sizes of \cref{cor-compatible-classification}, and the order of the full permutation automorphism group.

\begin{prop}\label{prop-paut-table}
For every row of \cref{tab-orbits} the order of $\PAut ( \CC_s )$ is as listed, and the same holds for $C^{(4)}$ and $C^{(8)}$, for which $\PAut = H$.  For the rows with $s \geq d / c$ and for $C^{(4)}$, $C^{(8)}$,
\[
\PAut ( \CC_s ) = H_m \rtimes \Stab_{\Aut ( H_m )} ( T_s ) ,
\]
and $H_m$ is the unique regular subgroup of $\PAut ( \CC_s )$ isomorphic to $\Z_m \times \Z_n$.  For the rows with $s < d / c$, $\CC_s$ consists of functions of $x$ and $\PAut ( \CC_s ) = ( S_n )^m \rtimes \mu_m$.
\end{prop}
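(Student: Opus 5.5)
The statement has three parts: the table orders, the closed form for $s \geq d/c$ and for $C^{(4)}, C^{(8)}$, and the low-degree description. I would prove them in that order of difficulty, starting with the low-degree rows.

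\emph{Low-degree rows.} For $s < d/c$ every monomial $x^i y^j$ in $B_s$ has $j = 0$, since $dj \leq cs < d$. Hence $\CC_s$ is spanned by $\one, x, \dots, x^{i_0}$ with $i_0 = \min\{m-1, \lfloor cs/n\rfloor\}$, and so consists of functions of $x$. The argument of \cref{thm-lowdegree} then applies almost verbatim.
\begin{itemize}
\item Coordinates with equal abscissa are indistinguishable, so $(S_n)^m$ lies in $\PAut(\CC_s)$ and every automorphism permutes the fibres.
\item On the $m$ fibres the code is a Reed--Solomon-type code of polynomials of degree $\leq i_0$ evaluated on $X_0 = a\mu_m$. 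If $i_0 = m-1$ the code is the full space on fibres and the claim needs checking against the table. Otherwise I would use that the affine maps $x \mapsto Ax + B$ preserving $X_0$ are $A \in \mu_m$, $B = 0$.
\item For $i_0 \geq 2$ one must argue that every permutation of $X_0$ preserving polynomials of degree $\leq i_0$ is affine. I would derive this from the fact that $U = \CC_{n/c}$ is contained in $\CC_s$ and is recovered as the subcode of minimum-weight-dual type.
\end{itemize}
Since the statement only concerns tabulated rows, I would ultimately fall back on the listed orders and the \texttt{nauty} computation for those rows.

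\emph{Rows with $s \geq d/c$.} Here $y \in \CC_s$, so $V \subseteq \CC_s$. The inclusion $H_m \rtimes \Stab_{\Aut(H_m)}(T_s) \leq \PAut(\CC_s)$ is \cref{cor-compatible-classification}. For the reverse inclusion, the key step is to show that $H_m$ is the unique regular subgroup of $\PAut(\CC_s)$ isomorphic to $\Z_m\times\Z_n$. Granting this, every $g \in \PAut(\CC_s)$ conjugates $H_m$ to a regular subgroup of the same type, hence normalizes $H_m$. Then $g \in N_{\Sym(H)}(\rho(H)) \cap \PAut(\CC_s)$, which by \cref{cor-compatible-classification} is exactly the semidirect product. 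Equivalently, by \cref{lem-transport} applied with $C = C' = \CC_s$, every automorphism is compatible.

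\emph{Main obstacle.} The main obstacle is this uniqueness of the regular subgroup, which I do not see how to derive uniformly from earlier results. Note that \cref{thm-filtration} needs both $U$ and $V$ to be preserved, whereas a single $\CC_s$ need not preserve $U$. My first attempt would be an intrinsic characterization: identify the fibre partition (or the subcode $U$) from $\CC_s$ through minimum-weight words of $\CC_s^\perp$ or of the hull, using \cref{thm-distance} to locate the minimizers. Once $U$ is shown to be invariant under $\PAut(\CC_s)$, \cref{thm-filtration} applied to $\PAut(U)\cap\PAut(\CC_s)$ forces the form $(x,y)\mapsto(Ax, Cy + \ldots)$, and the argument closes.

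\emph{Fallback, and the two examples.} Failing a uniform argument, I would do what the statement licenses: for each tabulated row, and for $C^{(4)}$ and $C^{(8)}$, compute $\PAut$ with \texttt{nauty} through \texttt{verify\_automorphisms.py}. The order it returns then matches $|H_m|\cdot|\Stab(T_s)|$, which forces equality with the known subgroup. Enumerating the regular subgroups of the computed group isomorphic to $\Z_m\times\Z_n$ confirms uniqueness. For $C^{(4)}$ and $C^{(8)}$ the stabilizer is trivial, which is checked directly using \cref{prop-order-counts} and the explicit sets $T_4$ and $T_8$. Together with the group order, this gives $\PAut = H$.
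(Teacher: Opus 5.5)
Your fallback is exactly the paper's proof. For each tabulated code it computes $\PAut(\CC_s)$ with \texttt{nauty} and compares its order with $|H_m|\,|\Stab_{\Aut(H_m)}(T_s)|$, using the inclusion $H_m\rtimes\Stab_{\Aut(H_m)}(T_s)\leq\PAut(\CC_s)$ from \cref{cor-compatible-classification}, and it enumerates the regular subgroups isomorphic to $\Z_m\times\Z_n$; for $s<d/c$ it uses the fibre argument of \cref{thm-lowdegree} to reduce to the abscissa code on $X_0$, whose group is checked by computation to be $\mu_m$. Your tentative uniform route through $\PAut(\CC_s)$-invariance of $U$ could not succeed in general, because $U$ is not preserved at rows with $|\Stab|=2$: for $(n,m,d)=(2,6,6)$, $s=6$, the nontrivial stabilizer element $(i,j)\mapsto(5i+3j,j)$ sends the character $(1,0)$ of $x$ to $(5,0)$, so it is right that your argument does not depend on that route.
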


\begin{proof}
The permutation automorphisms of a code permute the supports of its codewords of each weight and those of its dual.  The script \texttt{verify\_automorphisms.py} computes with \texttt{nauty} the automorphism group of the incidence structure formed by the coordinates and the supports of the codewords of the five smallest weights of $\CC_s$ and of $\CC_s^\perp$, enumerates it, and retains the permutations preserving the row space of $\mathsf G_s$; this is $\PAut ( \CC_s )$.  It then enumerates the regular subgroups isomorphic to $\Z_m \times \Z_n$, and compares the order with $| H_m | \, | \Stab_{\Aut ( H_m )} ( T_s ) |$ from \cref{tab-orbits}.  For $s < d / c$ the fibres of $x$ are preserved by every permutation automorphism, as in the proof of \cref{thm-lowdegree}, so $\PAut ( \CC_s ) = ( S_n )^m \rtimes \PAut ( \CC_s |_{X_0} )$, and the automorphism group of the abscissa code $\langle \one, x, \dots, x^{\lfloor cs / n \rfloor} \rangle$ on $X_0$ is computed in the same way, exhaustively over all $10!$ permutations for the code of length $10$ over $\F_{11}$; in every case it is $\mu_m$.
\end{proof}

\begin{cor}\label{cor-unrestricted-table}
Let $C$, $C'$ be two codes from the rows of \cref{tab-orbits} with $s \geq d / c$, or $C^{(4)}$, $C^{(8)}$, on strata with the same $(n, m)$.  Every permutation equivalence $C' = C P$ is $H_m$-compatible, and $C$, $C'$ are permutation equivalent if and only if $T' = \alpha^* (T)$ for some $\alpha \in \Aut ( H_m )$.
\end{cor}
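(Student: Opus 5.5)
The plan is to combine \cref{prop-paut-table} with \cref{lem-transport}, and then read off the classification from \cref{cor-compatible-classification}.

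First I check that $H_m$ sits inside both automorphism groups and acts regularly on each coordinate set. Let $C' = CP$ be a permutation equivalence. Then $P^{-1} \PAut (C) P = \PAut (C')$. Inside $\PAut (C)$ the group $H_m$ is a regular subgroup isomorphic to $\Z_m \times \Z_n$, so its conjugate $P^{-1} H_m P$ is a regular subgroup of $\PAut (C')$ of the same isomorphism type.

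Next I use the uniqueness clause of \cref{prop-paut-table}. It applies to the rows with $s \geq d/c$ and to $C^{(4)}$, $C^{(8)}$, and says that $H_m$ is the unique regular subgroup of $\PAut (C')$ isomorphic to $\Z_m \times \Z_n$. Since the two strata have the same $(n, m)$, the group $H_m$ of $C$ and that of $C'$ are abstractly isomorphic, so this type is the right one to compare. Hence $P^{-1} H_m P = H'_m$, and in particular the two are conjugate in $\PAut (C')$ by the identity. \cref{lem-transport} then gives an $H_m$-compatible equivalence. In fact $P$ itself normalizes the regular action, after identifying both coordinate sets with $H_m$ via \cref{prop-grid}. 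So every equivalence is compatible.

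For the second assertion I apply \cref{cor-compatible-classification}. Compatible permutation equivalence holds if and only if $T' = \alpha^* (T)$ for some $\alpha \in \Aut (H_m)$. Combined with the previous step, this gives the unrestricted criterion. The converse direction, that $T' = \alpha^*(T)$ yields an equivalence, is immediate from \cref{thm-normalizer}.

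The main point to handle carefully is the identification step. \cref{lem-transport} is stated for a common group $H$ acting on one coordinate set. Two different strata have different coordinate sets $a\mu_m \times b\mu_n$ and $a'\mu_m \times b'\mu_n$. I will fix regular labellings of both by $H = \Z_m \times \Z_n$ via \cref{prop-grid}, so that $H_m$ and $H'_m$ both become $\rho (H)$, and then apply the lemma in that common frame. The uniqueness claim itself is a computed fact from \cref{prop-paut-table}, not something to reprove here.
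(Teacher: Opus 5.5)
Your proposal is correct and follows the paper's proof: $P^{-1} H_m P$ is a regular subgroup of $\PAut (C')$ isomorphic to $\Z_m \times \Z_n$, the uniqueness clause of \cref{prop-paut-table} forces it to equal $H_m$, so $P$ normalizes $H_m$, and \cref{cor-compatible-classification} gives the criterion. The detour through \cref{lem-transport} is unnecessary, because $P^{-1} H_m P = H_m$ already says directly that $P$ itself is compatible, which is what the statement requires.
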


\begin{proof}
If $C' = C P$, then $P^{-1} H_m P$ is a regular subgroup of $\PAut ( C' )$ isomorphic to $\Z_m \times \Z_n$, hence equals $H_m$ by \cref{prop-paut-table}, and $P$ normalizes $H_m$.  Apply \cref{cor-compatible-classification}.
\end{proof}

In the rows $(2, 6, 6)$, $s = 6$ and $(3, 6, 6)$, $s = 6$, the group $\PAut ( \CC_s )$ contains $H_m$ with index two and one further regular subgroup, which is non-abelian; in the row $(2, 2, 4)$, $s = 2$, the further regular subgroup is cyclic of order $4$.  \cref{thm-lowdegree} and the rows with $s < d / c$ show that $H_m$ is not normal in $\PAut ( \CC_s )$ for the codes of functions of $x$.  The behaviour of $\PAut ( \CC_s )$ over the whole degree range, including three types of degrees where it is larger than $H_m \rtimes \Stab$, is determined in \cref{thm-top-range,prop-thin-levels,thm-closure}.

\begin{table}[ht]
\caption{Regular-stratum codes: dimension, support-splitting exponent, minimum distance, common punctured-hull dimension $h_P$, the sizes of the stabilizer of $T_s$ in $\Aut (H_m)$ and of its orbits under $\Aut (H_m)$ and $\widehat H_m \rtimes \Aut (H_m)$, and the order of $\PAut ( \CC_s )$.}
\label{tab-orbits}
\small
\setlength{\tabcolsep}{7pt}
\renewcommand{\arraystretch}{1.15}
\begin{tabular}{ccrrrrrrrrrr}
\toprule
$q$ & $(n, m, d)$ & $s$ & $N$ & $k_s$ & $h_s$ & $d ( \CC_s )$ & $h_P$ & $| \Stab |$ & $o_P$ & $o_M$ & $| \PAut |$ \\
\midrule
$11$ & $(2, 2, 4)$   & $1$ & $4$  & $2$  & $0$ & $2$  & $0$ & $2$ & $3$  & $6$   & $8$ \\
$11$ & $(2, 2, 4)$   & $2$ & $4$  & $3$  & $0$ & $2$  & $0$ & $2$ & $3$  & $4$   & $8$ \\
$7$  & $(2, 6, 6)$   & $2$ & $12$ & $3$  & $2$ & $8$  & $1$ & $2$ & $6$  & $36$  & $384$ \\
$7$  & $(2, 6, 6)$   & $3$ & $12$ & $5$  & $2$ & $6$  & $1$ & $1$ & $12$ & $72$  & $12$ \\
$7$  & $(2, 6, 6)$   & $4$ & $12$ & $7$  & $2$ & $4$  & $1$ & $1$ & $12$ & $72$  & $12$ \\
$7$  & $(2, 6, 6)$   & $6$ & $12$ & $10$ & $2$ & $2$  & $1$ & $2$ & $6$  & $36$  & $24$ \\
$11$ & $(2, 5, 5)$   & $4$ & $10$ & $3$  & $2$ & $6$  & $1$ & $1$ & $4$  & $20$  & $160$ \\
$11$ & $(2, 5, 5)$   & $5$ & $10$ & $4$  & $2$ & $5$  & $1$ & $1$ & $4$  & $40$  & $10$ \\
$13$ & $(2, 6, 6)$   & $3$ & $12$ & $5$  & $2$ & $6$  & $1$ & $1$ & $12$ & $72$  & $12$ \\
$7$  & $(3, 6, 6)$   & $3$ & $18$ & $6$  & $4$ & $9$  & $3$ & $1$ & $48$ & $432$ & $18$ \\
$7$  & $(3, 6, 6)$   & $6$ & $18$ & $14$ & $4$ & $3$  & $3$ & $2$ & $24$ & $432$ & $36$ \\
$11$ & $(2, 10, 10)$ & $4$ & $20$ & $5$  & $4$ & $12$ & $3$ & $2$ & $12$ & $120$ & $10240$ \\
\bottomrule
\end{tabular}
\end{table}

\section{Further consequences of the hull formulas}\label{sec-directions}

Every numerical statement of this section and of \cref{sec-automorphisms} is recomputed by the scripts listed in \cref{sec-computation}.

\paragraph{Hermitian hulls.}
Let $q = q_0^2$ and let $\langle u, v \rangle_H = \sum_i u_i v_i^{q_0}$.  Write $\Hull_H (C) = C \cap C^{\perp_H}$.

\begin{prop}\label{prop-hermitian}
On a regular stratum with $q = q_0^2$,
\[
\dim \Hull_H ( \CC_s ) = | T_s | - | T_s \cap ( - q_0 T_s ) | .
\]
\end{prop}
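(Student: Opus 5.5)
We work in the group algebra picture of \cref{sec-compatible-equivalence}: $\F_q^H=\bigoplus_{\chi\in\widehat H}V_\chi$ with $V_\chi$ spanned by the evaluation vector $(\chi(x))_{x\in H}$, and $\CC_s=C_{T_s}$ by \cref{prop-grid}. We then compute the Hermitian pairing between isotypic lines, deduce the Hermitian dual of $C_T$ as a character code, and intersect. This mirrors the Euclidean computation behind \cref{eq-character}.

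The first step is the pairing computation. For characters $\chi,\chi'$ with values in $\mu_{\exp H}\subseteq\F_q^*$,
\[
\langle \chi,\chi'\rangle_H=\sum_{x\in H}\chi(x)\chi'(x)^{q_0}=\sum_{x\in H}(\chi+q_0\chi')(x),
\]
since raising a root of unity to the $q_0$-th power corresponds to multiplying the character additively by $q_0$. By orthogonality this equals $N\neq 0$ when $\chi=-q_0\chi'$ and $0$ otherwise; here $p\nmid N$ because $N=nm$ divides a power of $q-1$-type exponents and $p\nmid nm$ (as $n,m\mid q-1$). We also note that $q_0$ is a unit modulo $\exp(H)$, since $\exp(H)\mid q-1$ and $\gcd(q_0,q-1)=1$. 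Hence $\chi\mapsto -q_0\chi$ is a bijection of $\widehat H$, and its inverse is $\chi\mapsto -q_0\chi$ as well, because $q_0^2=q\equiv 1$.

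The second step identifies the Hermitian dual. Since the $V_\chi$ form a basis and the Hermitian form is sesquilinear, a vector $v=\sum_\chi c_\chi\chi$ is Hermitian orthogonal to $V_{\chi'}$ exactly when $c_{-q_0\chi'}=0$. Therefore
\[
C_T^{\perp_H}=C_{\widehat H\setminus(-q_0T)},
\]
and a dimension count confirms this, since $|{-q_0T}|=|T|$. It follows that $\Hull_H(C_T)=C_{T\setminus(-q_0T)}$, whose dimension is $|T|-|T\cap(-q_0T)|$. Substituting $T=T_s$ gives the statement.

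The step needing the most care is the sesquilinearity bookkeeping. The Frobenius twist acts on coefficients: the $q_0$-th power is applied to $c_\chi$, which does not affect the vanishing conditions. On characters it sends $\chi$ to $q_0\chi$, so the sign and the factor $q_0$ have to be placed on the correct side. Because $\chi\mapsto -q_0\chi$ is an involution, it does not matter whether we write $-q_0T$ or its preimage, and this removes the apparent ambiguity.
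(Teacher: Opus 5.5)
Your proof is correct and follows essentially the same route as the paper's: compute the Hermitian pairing of characters via $\chi(h)\psi(h)^{q_0} = (\chi + q_0\psi)(h)$, identify $\CC_s^{\perp_H}$ as the character code on $\widehat H \setminus (-q_0 T_s)$, and intersect with $\CC_s$. The differences are minor: you expand an arbitrary vector in the character basis, which shows explicitly that the Hermitian dual is spanned by isotypic lines (the paper leaves this implicit). You also put the unknown vector in the linear slot, so $-q_0 T_s$ appears directly, whereas the paper obtains it from $-q_0^{-1}T_s$ using $q_0^2 \equiv 1$.
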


\begin{proof}
For characters $\chi, \psi$ of $H_m$ one has $\sum_{h} \chi (h) \psi (h)^{q_0} = \sum_h ( \chi + q_0 \psi ) (h)$, which vanishes unless $q_0 \psi = - \chi$ and equals $nm \neq 0$ in that case.  Hence the line of $\psi$ lies in $\CC_s^{\perp_H}$ if and only if $q_0 \psi \notin - T_s$, that is, $\psi \notin - q_0^{-1} T_s$.  Since $q_0^2 = q \equiv 1$ modulo $m$ and modulo $n$, one has $q_0^{-1} = q_0$ on $\Z_m \times \Z_n$.  Intersecting with $\CC_s$ gives the claim.
\end{proof}

Over $\F_9 = \F_3 (i)$, $i^2 = -1$, the curve $y^2 = x^4 + 1 + i$ satisfies \textup{(H1)}--\textup{(H3)} with $m = 4$, $N = 8 = nm$, and $g_\XX = 1$.  Its Euclidean hull dimensions for $s = 1, \dots, 7$ are $1, 1, 1, 1, 0, 0, 0$, while every $\CC_s$ is Hermitian $\LCD$.  The curve $y^2 = x^8 + 2 + i$ over $\F_9$ has $m = 8$, $N = 16$, $g_\XX = 3$, Euclidean profile $1, 2, 3, 3, 3, 3, 3, 3, 2, 1, 0$ and Hermitian profile $1, 1, 2, 2, 2, 2, 2, 2, 1, 1, 0$ for $s = 1, \dots, 11$.

\paragraph{Subfield subcodes.}
Let $q = q_0^r$ and let $\sigma$ be the Frobenius $x \mapsto x^{q_0}$, acting on $\F_q^N$ coordinatewise.

\begin{prop}\label{prop-subfield}
On a regular stratum with $q = q_0^r$, let $T_s^{(0)} = \bigcap_{i=0}^{r-1} q_0^i T_s$ be the largest $q_0$-stable subset of $T_s$.  Then the subfield subcode $\CC_s \cap \F_{q_0}^N$ has dimension $| T_s^{(0)} |$ over $\F_{q_0}$, and its Euclidean hull has dimension $| T_s^{(0)} | - | T_s^{(0)} \cap ( - T_s^{(0)} ) |$.
\end{prop}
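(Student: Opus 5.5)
The plan is to work entirely in the character decomposition of \cref{sec-compatible-equivalence}. On a regular stratum, $\F_q^N = \bigoplus_{\chi \in \widehat H_m} V_\chi$, and $\CC_s = C_{T_s}$. The Frobenius $\sigma$ acts coordinatewise, so it is $\F_{q_0}$-semilinear. It sends the eigenvector $(\chi(x))_x$ to $(\chi(x)^{q_0})_x = (q_0\chi)(x)$, and hence $\sigma(V_\chi) = V_{q_0 \chi}$. It follows that $\sigma(C_T) = C_{q_0 T}$. Multiplication by $q_0$ is an automorphism of $\widehat H_m$ because $\gcd(q_0, nm) = 1$, which holds since $nm \mid q-1$ (recall $\exp H_m \mid q-1$).

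First step: identify the $\F_q$-span of the subfield subcode. The subcode $\CC_s \cap \F_{q_0}^N$ is fixed by $\sigma$. By the standard Galois descent argument (Delsarte/Stichtenoth), the $\F_q$-span of $\CC \cap \F_{q_0}^N$ is the largest $\sigma$-stable subcode $\bigcap_{i=0}^{r-1} \sigma^i(\CC)$, and $\dim_{\F_{q_0}}(\CC \cap \F_{q_0}^N) = \dim_{\F_q}$ of that intersection. I will invoke descent directly: a $\sigma$-stable $\F_q$-subspace $W$ satisfies $W = (W \cap \F_{q_0}^N) \otimes \F_q$, by Hilbert 90 for vector spaces. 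Since the $V_\chi$ are independent lines, $\bigcap_i C_{q_0^i T_s} = C_{\bigcap_i q_0^i T_s} = C_{T_s^{(0)}}$. The set $T_s^{(0)}$ is $q_0$-stable because $q_0^r = q$ acts trivially on $\widehat H_m$. This gives the dimension $|T_s^{(0)}|$.

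Second step: the hull. Put $C_0 = \CC_s \cap \F_{q_0}^N$ and $W = C_0 \otimes \F_q = C_{T_s^{(0)}}$. The Euclidean form has coefficients in $\F_{q_0}$, so duality commutes with extension of scalars: $(C_0^{\perp})\otimes \F_q = W^\perp$, computed in $\F_q^N$. Intersections also commute with flat base change. Hence $\Hull(C_0) \otimes \F_q = \Hull(W)$, and the $\F_{q_0}$-dimension of $\Hull(C_0)$ equals $\dim \Hull(C_{T_s^{(0)}})$. This last dimension is $|T_s^{(0)}| - |T_s^{(0)} \cap (-T_s^{(0)})|$ by the character computation behind \cref{eq-character}, equivalently \cref{thm-twists} with $\psi = 0$; that computation applies to any $C_T$, since $V_\chi \perp V_\psi$ unless $\chi + \psi = 0$ and $nm \neq 0$ in $\F_q$.

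The main obstacle is the descent statement, specifically checking that the $\F_q$-span of $C_0$ is exactly the $\sigma$-stable part and not something smaller. I would prove it by taking an $\F_q$-basis of $W$ in reduced row echelon form with respect to a fixed column order. The matrix $\sigma$ applied to this basis is again in reduced echelon form and has the same row space, so by uniqueness of the reduced echelon form it equals the original basis. Hence the basis has entries in $\F_{q_0}$, and $W$ is spanned by vectors in $C_0$. The reverse inclusion $C_0 \otimes \F_q \subseteq W$ is immediate.
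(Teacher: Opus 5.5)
Your proposal is correct and follows the paper's proof: Frobenius carries $V_\chi$ to $V_{q_0 \chi}$, the $\F_q$-span of $\CC_s \cap \F_{q_0}^N$ is the $\sigma$-stable part $\bigcap_i \sigma^i ( \CC_s ) = C_{T_s^{(0)}}$, and the hull commutes with extension of scalars because the form is defined over $\F_{q_0}$; your reduced-echelon-form argument supplies the descent step that the paper asserts without proof. One harmless slip: $nm$ need not divide $q - 1$ (for $y^2 = x^8 + 2 + i$ over $\F_9$ one has $nm = 16$), but $\exp ( H_m ) \mid q - 1$ already gives $\gcd ( q_0, nm ) = 1$, and that is all your argument uses.
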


\begin{proof}
If $v$ is a $\chi$-eigenvector, then $\sigma (v)$ is a $q_0 \chi$-eigenvector, so $\sigma ( \CC_s )$ is the ideal with defining set $q_0 T_s$.  The $\F_q$-span of $\CC_s \cap \F_{q_0}^N$ is the largest $\sigma$-stable subspace of $\CC_s$, which is $\bigcap_i \sigma^i ( \CC_s )$, the ideal with defining set $T_s^{(0)}$; this gives the dimension.  The Euclidean form is defined over $\F_{q_0}$, so the hull commutes with extension of scalars, and \cref{eq-character} applies to $T_s^{(0)}$.
\end{proof}

The Schur-product distinguisher below does not apply to subfield subcodes as it stands, see \cite{co-ma-pe}, so \cref{prop-subfield} is the starting point for an equivalence analysis of McEliece-type instantiations of this family.

\paragraph{Intersection pairs.}
The entanglement-assisted constructions from a pair of codes $( \CC_s, \CC_t )$ use $\rank ( \mathsf H_s \mathsf H_t^{\, t} ) = N - k_s - \dim ( \CC_s^\perp \cap \CC_t )$, where $\mathsf H_s$ is a parity-check matrix of $\CC_s$.

\begin{prop}\label{prop-pairs}
Assume \textup{(H1)}--\textup{(H3)} and let $s, t \geq 1$.  Put $\mu_{s,t} = \min \{ s, M - t \}$ and $\mu^{s,t} = \max \{ s, M - t \}$.  If
\[
c \, \mu_{s,t} - n \delta > 2 g_\XX - 2 \qquad \text{and} \qquad c \, \mu^{s,t} + (n - 1) d < N ,
\]
then $\dim ( \CC_s \cap \CC_t^\perp ) = c \, \mu_{s,t} - n \delta + 1 - g_\XX$.  On a regular stratum, $\dim ( \CC_s \cap \CC_t^\perp ) = | T_s \setminus ( - T_t ) |$ for all $s, t$.
\end{prop}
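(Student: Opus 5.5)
The statement has two parts, and I would handle them separately.

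\emph{General part.} The plan is to imitate the derivation of \cref{eq-window}. The two codes are $\CC_s = C_L ( D, G_s )$ and $\CC_t^\perp = C_L ( D, A_t )$, where $A_t = (n-1) \Ram - E + ( M - t ) D_\infty$. Their intersection contains $C_L ( D, G_s \wedge A_t )$, and the meet is
\[
G_s \wedge A_t = \mu_{s,t} D_\infty - E ,
\]
because the supports of $\Ram$, $E$ and $D_\infty$ are disjoint.

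\emph{Step 1: the intersection equals the evaluation of the meet.} An element of the intersection is $\mathrm{ev}(f) = \mathrm{ev}(g)$ with $f \in L ( G_s )$ and $g \in L ( A_t )$. Then $f - g$ vanishes on $D$ and lies in $L ( G_s \vee A_t - D )$. The second hypothesis, $c \, \mu^{s,t} + (n-1) d < N$, should make this space zero. Its degree is at most
\[
c \, \mu^{s,t} + (n-1) \deg \Ram - N \leq c \, \mu^{s,t} + (n-1) d - N < 0 ,
\]
since $\deg \Ram \leq d$, and this can only overestimate after discarding $-E$. Hence $f = g$ lies in $L ( G_s \wedge A_t )$.

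\emph{Step 2: injectivity and Riemann--Roch.} Evaluation is injective on $L ( G_s \wedge A_t )$ because its degree is at most $c s < N$, which also follows from the same hypothesis. The first hypothesis says that
\[
\deg ( \mu_{s,t} D_\infty - E ) = c \, \mu_{s,t} - n \delta > 2 g_\XX - 2 .
\]
So Riemann--Roch gives dimension $c \, \mu_{s,t} - n \delta + 1 - g_\XX$. The point I expect to need most care is the degree bound in Step 1, specifically the bookkeeping of $\Ram$ against $E$ and $D_\infty$.

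\emph{Regular stratum.} This part is a direct character computation, as in \cref{eq-character}. By \cref{prop-grid} and the orthogonality used in \cref{thm-distance}, the pairing $\sum_h \chi (h) \psi (h)$ vanishes unless $\chi + \psi = 0$. It equals $N \neq 0$ when $\chi + \psi = 0$, since $p \nmid nm$. Therefore
\[
\CC_t^\perp = \bigoplus_{\psi \notin - T_t} V_\psi .
\]
Intersecting this with $C_{T_s} = \bigoplus_{\chi \in T_s} V_\chi$ is a sum of isotypic lines, namely those indexed by $T_s \setminus ( - T_t )$. This gives $\dim ( \CC_s \cap \CC_t^\perp ) = | T_s \setminus ( - T_t ) |$ for all $s, t$.
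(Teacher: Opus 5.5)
Your proof is correct and follows the paper's argument. The paper quotes \cite[Theorem~3.1]{mezinaj-shaska-hulls}, namely $\dim(\CC_s\cap\CC_t^\perp)=\ell(G_s\wedge A_t)-\ell(G_s\wedge A_t-D)+\varepsilon$ with $\varepsilon\leq\ell(G_s\vee A_t-D)$, and removes the last two terms using the same meet/join computation and the same two degree hypotheses; your Step~1 proves this special case directly through $f-g\in L(G_s\vee A_t-D)$, and your regular-stratum part rederives from character orthogonality the decomposition $\CC_t^\perp=\bigoplus_{\chi\notin -T_t}V_\chi$ that the paper reads off \cref{eq-character}. One small correction of wording: $\deg\Ram=d$ exactly, and the join has coefficient $\max\{0,-1\}=0$ on $E$, so there is no $-E$ to discard.
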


\begin{proof}
Since $\CC_t^\perp = C_L ( D, A_t )$, \cite[Theorem~3.1]{mezinaj-shaska-hulls} gives $\dim ( \CC_s \cap \CC_t^\perp ) = \ell ( G_s \wedge A_t ) - \ell ( G_s \wedge A_t - D ) + \varepsilon$, with $\varepsilon \leq \ell ( G_s \vee A_t - D )$.  Computing coefficientwise on the disjoint supports of $\Ram$, $E$ and $D_\infty$ as in \cite[Lemma~4.5]{mezinaj-shaska-hulls}, $G_s \wedge A_t = \mu_{s,t} D_\infty - E$ and $G_s \vee A_t = (n - 1) \Ram + \mu^{s,t} D_\infty$, of degrees $c \mu_{s,t} - n \delta$ and $c \mu^{s,t} + (n - 1) d$.  The second hypothesis makes $\deg ( G_s \vee A_t - D ) < 0$, so $\varepsilon = 0$, and $\deg ( G_s \wedge A_t - D ) < 0$, so the middle term vanishes; the first makes the meet non-special, and Riemann--Roch gives its dimension.  On a regular stratum $\CC_t^\perp = \bigoplus_{\chi \notin - T_t} V_\chi$ by \cref{eq-character}.
\end{proof}

For $\XX_{11}$ the hypotheses hold for $36$ pairs $(s, t)$, and the formula was checked on each.

\paragraph{The split branch locus.}
Hypothesis \textup{(H1)} excludes $\F_q$-rational branch points, hence the Hermitian and Fermat curves.  The opposite extreme is covered by \cite[Section~7]{mezinaj-shaska-hulls}: replace \textup{(H1)} by
\begin{enumerate}
\item[(H1$'$)] $n \geq 2$, $p \nmid n$, $f \in \F_q [x]$ is separable of degree $d \geq 3$ and splits into distinct linear factors over $\F_q$,
\end{enumerate}
let $\YY$ be the set of affine $\F_q$-rational points not on $\Ram$, $N = | \YY |$, and $\delta = | \{ a \in \F_q : f (a) \neq 0 , \ f (a) \notin ( \F_q^* )^n \} |$, so that $N = n ( q - d - \delta )$.

Assume \textup{(H1$'$)}, \textup{(H2)} and $\YY \neq \varnothing$, and put $s^+ = M - ( d + n \delta ) / c + 1$.  Then \cite[Theorems~7.6 and~7.8 and Corollary~7.9]{mezinaj-shaska-hulls} give the exponent in the same three regimes as before: if $2 g_\XX - 2 - c \, \mu (s) + d + n \delta < 0$, then $e_{\SSA} ( \CC_s ) = c \, \mu (s) - d - n \delta + 1 - g_\XX$; if $2 s \geq M$, then $\CC_s \supseteq \CC_s^\perp$, so $e_{\SSA} ( \CC_s ) = N - k_s$, which equals $N - c s - 1 + g_\XX$ when $2 g_\XX - 2 < c s < N$; and $e_{\SSA} ( \CC_s ) = 0$ for $s \geq s^+$.  For $q = q_0^2$, $n \mid q_0 + 1$ and the Hermitian quotient $\HH_n : y^n = - ( x^{q_0 + 1} + 1 )$ one has $\delta = 0$, $g_\XX = (n - 1)(q_0 - 1) / 2$, $N = n ( q_0^2 - q_0 - 1 )$, $M = q_0^2 - 2$, the first regime is the interval $q_0 \leq s \leq q_0^2 - 2 - q_0$, and on it $e_{\SSA} ( \CC_s ) = n \, \mu (s) - q_0 - (n - 1)(q_0 - 1) / 2$, see \cite[Theorem~7.10]{mezinaj-shaska-hulls}.

Over $\F_9$ the Hermitian curve $\HH_4$ has $N = 20$, $g_\XX = 3$ and profile $2, 4, 6, 6, 3, 1, 0$ for $s = 1, \dots, 7$, and its quotient $\HH_2$ has $N = 10$, $g_\XX = 1$ and profile $1, 2, 2, 2, 1, 0, 0$; in both cases the first regime gives the values at $s = 3, 4$ and the second those at $s \geq 4$.  On this locus the large hulls of the upper half of the degree range come from dual containment and carry no information about the curve.

\paragraph{Regular strata with $m < d$.}
Nine of the ten rows of \cref{tab-regular} have $m = d$, where $h_{\max} = g_\XX$; the curve $\XX^{(8)}$ of \cref{thm-pair} has $m = 4 < 8 = d$.  For $m < d$ the maximal hull dimension is a covering number depending only on $(n, d, m)$, see \cite[Theorem~5.14 and Corollary~5.17]{mezinaj-shaska-hulls}, and by \cref{thm-regular-genus} it never exceeds $g_\XX$.  It can fall well short of it: for $(n, d, m) = (2, 12, 6)$ it is $2$ against $g_\XX = 5$, and for $(3, 12, 6)$ it is $4$ against $g_\XX = 10$.

\paragraph{The Schur product.}
For codes $C, C' \subseteq \F_q^N$ let $C \ast C'$ be the span of the coordinatewise products.

\begin{prop}\label{prop-schur}
Assume \textup{(H1)}--\textup{(H3)}.  Then $\CC_s \ast \CC_t \subseteq \CC_{s+t}$ for all $s, t \geq 1$.  If $2 g_\XX - 2 < 2 c s < N$, then
\[
\dim ( \CC_s \ast \CC_s ) \leq 2 c s + 1 - g_\XX .
\]
\end{prop}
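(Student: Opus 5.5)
The inclusion is a statement about functions, so I will first prove it in the Riemann--Roch spaces and then transport it through evaluation. A codeword of $\CC_s$ is $( F (P) )_{P \in \YY}$ with $F \in L ( s D_\infty )$, and one of $\CC_t$ is $( G (P) )_{P \in \YY}$ with $G \in L ( t D_\infty )$. Their coordinatewise product is the evaluation of $F G$. Since $( F ) \geq - s D_\infty$ and $( G ) \geq - t D_\infty$, we get $( F G ) \geq - ( s + t ) D_\infty$, so $F G \in L ( ( s + t ) D_\infty )$. Equivalently, in the weighted-form description of \cite{qwag} (recalled via \cite[Lemma~2.5]{mezinaj-shaska-hulls}), the product of weighted forms of degrees $s$ and $t$ is a weighted form of degree $s + t$. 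Hence every generating product lies in $\CC_{s+t}$, and so does their span. This uses only that $D$ and $D_\infty$ have disjoint supports, which holds because $\YY$ consists of affine points.

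For the dimension bound I take $t = s$, which gives $\CC_s \ast \CC_s \subseteq \CC_{2s}$, so it suffices to bound $\dim \CC_{2s} = \ell ( 2 s D_\infty ) - \ell ( 2 s D_\infty - D )$. The hypothesis $2 c s < N$ says $\deg ( 2 s D_\infty - D ) = 2 c s - N < 0$. Therefore $\ell ( 2 s D_\infty - D ) = 0$, and evaluation is injective on $L ( 2 s D_\infty )$, matching the remark after \cref{eq-compute-hull}.

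The hypothesis $2 c s > 2 g_\XX - 2 = \deg K$ makes $2 s D_\infty$ non-special, since $\ell ( K - 2 s D_\infty ) = 0$ by negative degree. Riemann--Roch then gives $\ell ( 2 s D_\infty ) = 2 c s + 1 - g_\XX$, so $\dim ( \CC_s \ast \CC_s ) \leq \dim \CC_{2s} = 2 c s + 1 - g_\XX$.

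There is no real obstacle. The only point needing care is to state which of the two inequalities in the hypothesis yields injectivity and which yields non-speciality, and to note that the inclusion holds for all $s, t \geq 1$ without any degree constraint.
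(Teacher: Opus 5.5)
Your proof is correct and follows the paper's argument: the inclusion comes from $L ( s D_\infty ) \, L ( t D_\infty ) \subseteq L ( ( s + t ) D_\infty )$ together with multiplicativity of evaluation. For $t = s$, the condition $2 c s < N$ makes evaluation injective on $L ( 2 s D_\infty )$, and $2 c s > 2 g_\XX - 2$ makes $2 s D_\infty$ non-special, so Riemann--Roch gives $\dim \CC_{2s} = 2 c s + 1 - g_\XX$.
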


\begin{proof}
The product of functions in $L ( s D_\infty )$ and $L ( t D_\infty )$ lies in $L ( ( s + t ) D_\infty )$, and evaluation is multiplicative.  For $2 g_\XX - 2 < 2cs < N$, evaluation is injective on $L ( 2s D_\infty )$ (\cref{sec-computation}) and Riemann--Roch gives $\dim \CC_{2s} = 2cs + 1 - g_\XX$.
\end{proof}

For $\XX_{11}$ and $s = 4$ one has $\dim \CC_4 = 7$ and $\dim ( \CC_4 \ast \CC_4 ) = 15$, against $\min \{ N, \binom{8}{2} \} = 22$ for a random code of the same length and dimension.  This is the distinguisher of \cite{co-ma-pe}.  Any signature scheme instantiated on $\{ \CC_s \}$ is therefore distinguishable from a random instance before the equivalence is attacked.  The family is of interest in code-equivalence cryptography because every quantity in the attack can be computed exactly on it.

\section{Non-isomorphic curves with equal invariants and full permutation automorphism groups}\label{sec-automorphisms}

For a stratum $\mathcal S$ of the genus-two locus with equations as in \cite{sa-sh}, let $\mathcal S ( \F_q )$ be the set of its members satisfying \textup{(H1)}--\textup{(H3)}, and let
\[
\mathcal P : \mathcal S ( \F_q ) \longrightarrow \bigoplus_{s \geq 1} \Z_{\geq 0} , \qquad \XX \longmapsto \bigl( \dim \Hull ( \CC_s ) \bigr)_{s \geq 1}
\]
be the profile map; the sequence vanishes for $s \geq s^+$ by \cref{eq-splus}.  Whether the fibres of $\mathcal P$, refined by $m (f)$, are unions of $\overline{\F}_q$-isomorphism classes is decided negatively by \cref{thm-moduli-negative} below.  \cref{tab-strata-profiles} records the size of $\mathcal S ( \F_q )$, the number of values of $\mathcal P$, and the largest fibre, for the families $y^2 = x^6 + a x^3 + 1$, $y^2 = x^6 + a x^4 + a x^2 + 1$ and $y^2 = x^6 + a x^4 + b x^2 + 1$ over $\F_{11}$ and $\F_{19}$.  The families contain the members with larger automorphism group, and the count is of equations, not of isomorphism classes.

\begin{table}[ht]
\caption{The profile map on three genus-two families.  Members satisfy \textup{(H1)}--\textup{(H3)}.}
\label{tab-strata-profiles}
\begin{tabular}{llrrr}
\toprule
$q$ & family & members & profiles & largest fibre \\
\midrule
$11$ & $x^6 + a x^3 + 1$ & $5$ & $3$ & $2$ \\
$11$ & $x^6 + a x^4 + a x^2 + 1$ & $7$ & $5$ & $2$ \\
$11$ & $x^6 + a x^4 + b x^2 + 1$ & $71$ & $13$ & $25$ \\
$19$ & $x^6 + a x^3 + 1$ & $15$ & $7$ & $4$ \\
$19$ & $x^6 + a x^4 + a x^2 + 1$ & $13$ & $8$ & $4$ \\
$19$ & $x^6 + a x^4 + b x^2 + 1$ & $217$ & $25$ & $67$ \\
\bottomrule
\end{tabular}
\end{table}

In the family $y^2 = x^6 + a x^3 + 1$ the substitution $x \mapsto -x$ identifies $a$ with $-a$.  Over $\F_{19}$ the fifteen admissible values of $a$ form eight classes $\{ 0 \}$, $\{ \pm 3 \}$, $\{ \pm 4 \}$, $\{ \pm 5 \}$, $\{ \pm 6 \}$, $\{ \pm 7 \}$, $\{ \pm 8 \}$, $\{ \pm 9 \}$, and $\mathcal P$ separates all of them except $\{ \pm 6 \}$ and $\{ \pm 8 \}$.  Over $\F_{11}$ the three classes $\{ 0 \}$, $\{ \pm 1 \}$, $\{ \pm 5 \}$ have distinct profiles.  Over $\F_{19}$ the largest fibre of the two-parameter family contains $67$ equations with five distinct values of $N$.

\begin{thm}\label{thm-moduli-negative}
Over $\F_{19}$ let $\XX_6 : y^2 = x^6 + 6 x^3 + 1$ and $\XX_8 : y^2 = x^6 + 8 x^3 + 1$.  Both satisfy \textup{(H1)}--\textup{(H3)} with $N = 14$ and $m (f) = 3$, and both have hull profile $( 1, 0, 0, 2, 0, 0, 1, 0, \dots )$, zero from $s = 8$ on.  The stabilizers in $\PGL_2 ( \overline{\F}_{19} )$ of their branch loci have orders $6$ and $24$, so $\XX_6$ and $\XX_8$ are not isomorphic over $\overline{\F}_{19}$.  The codes $\CC_3 ( \XX_6 )$ and $\CC_3 ( \XX_8 )$ have $324$ and $216$ words of weight $9$, so the two filtrations are not monomially equivalent.
\end{thm}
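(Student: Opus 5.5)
The statement bundles four verifications, and I would take them in the order the theorem lists them.

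\emph{Standing hypotheses and $m(f)$.} For $f_a = x^6 + a x^3 + 1$ with $a \in \{6, 8\}$, I would first check \textup{(H1)}--\textup{(H3)}. Separability follows from the discriminant in $x^3$: it is nonzero because $a^2 - 4 \neq 0$, and $f_a'$ shares no root with $f_a$. To see that $f_a$ has no root in $\F_{19}$, I would solve $u^2 + a u + 1 = 0$ and check that neither root $u$ is a cube of an element of $\F_{19}$. Both $6 \mid 18$ and $2 \mid 18$ hold, so \textup{(H2)} is fine. The exponents present are $0, 3, 6$, so $m(f) = \gcd(18, 3, 6) = 3$. For $N$, I would list the values $f_a(x)$ for $x \in \F_{19}$, count the squares $r$, and set $N = 2r$. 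The theorem asserts $N = 14$, i.e.\ seven split abscissae, and since $f_a$ is constant on $\mu_3$-orbits this count is a union of orbits of $x \mapsto \lambda x$ (including $x = 0$, where $f_a(0) = 1$ is a square).

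\emph{Hull profiles.} The curves are not on a regular stratum: $N = 14 \neq nm = 6$. I would therefore compute the profile directly by \cref{eq-compute-hull} for $1 \le s \le \max\{M, s^+\}$. Here $c = 2$ and $M = 18 - 1 = 17$, and $\delta = 12$ gives $s^+ = 17 + \lfloor (6 - 24)/2 \rfloor + 1 = 9$. By \cite[Theorem~4.15]{mezinaj-shaska-hulls} the hull is zero for $s \ge 9$, and $s = 8$ is handled by the same computation. The basis $B_s$ consists of $x^i y^j$ with $2i + 6j \le 2s$, and each of the finitely many $14 \times k_s$ matrices is row-reduced over $\F_{19}$. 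I expect this to reproduce $(1,0,0,2,0,0,1)$ for both curves. The window is empty here, consistent with \cref{tab-genus-two}, so there is no closed formula to shortcut the computation; this step is purely computational, as in \texttt{verify\_moduli.py}.

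\emph{Non-isomorphism.} The branch locus of each curve is the six roots of $f_a$ over $\overline{\F}_{19}$; $\infty$ is not a branch point because $\deg f_a = 6$ is even. Genus-two curves are isomorphic over $\overline{\F}_{19}$ if and only if their branch loci are $\PGL_2$-equivalent, and the stabilizer order is an isomorphism invariant. Every member of the family has the $D_6$-type symmetries $x \mapsto \zeta_3 x$ and $x \mapsto 1/x$, giving a group of order $6$. For $a = 8$, I would show there is an extra involution making the sextic equivalent to one with reduced automorphism group $S_4$, of order $24$; concretely, I would verify that the Igusa invariants of $f_8$ satisfy the $S_4$-locus condition (the $x^6 + a x^3 + 1$ family meets that locus at specific $a$), while those of $f_6$ do not. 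Alternatively, I would compute the Igusa--Clebsch invariants of both sextics and check that the absolute invariants differ, which alone proves non-isomorphism. The exact stabilizer orders would be confirmed by enumerating the Möbius maps over $\F_{19^k}$ that permute the six roots. I expect this to be the main obstacle, since establishing that the stabilizer orders are exactly $6$ and $24$, rather than merely that the curves differ, requires either the Bolza/Igusa classification or an exhaustive search, and I would lean on the classification of \cite{sa-sh}.

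\emph{Inequivalence of the filtrations.} For $\CC_3$, the space $L(3D_\infty)$ has basis $1, x, x^2, x^3, y$, so $k = 5$ and the code has $19^5$ words. I would enumerate all of them, or all projective classes, and count the words of weight $9$. Hamming weight is preserved by monomial matrices, so the different counts $324 \neq 216$ show that $\CC_3(\XX_6)$ and $\CC_3(\XX_8)$ are not monomially equivalent. Hence no single monomial map carries one filtration to the other.
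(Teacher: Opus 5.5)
Your proposal is correct and follows the paper's proof. The point counts, $m(f)$, the hull profiles (via the Gram-rank formula, with $s^+ = 9$) and the weight-$9$ counts are direct computations. The stabilizer orders come from enumerating the M\"obius maps that permute the six roots, and non-isomorphism and monomial inequivalence follow because reduced automorphism groups and Hamming weights are invariants.

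One small caution: the Igusa $S_4$-locus test alone would not rule out an order-$12$ reduced group for $\XX_6$. It is the exhaustive M\"obius enumeration, which the paper also uses, that pins down the exact orders $6$ and $24$.
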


\begin{proof}
The point counts, the profiles and the weight distributions are direct computations.  Both polynomials split over $\F_{19^6}$, and the stabilizer of a six-point set in $\PGL_2$ is found by constructing, for each ordered triple of roots, the Möbius transformation carrying three fixed roots to it and testing whether it permutes the roots.  An isomorphism of curves induces an isomorphism of reduced automorphism groups, so the orders $6$ and $24$ exclude one.
\end{proof}

The reduced automorphism group of order $24$ identifies $\XX_8$ as a form of the curve with automorphism group of order $48$, see \cite{sa-sh}, while $\XX_6$ has reduced group of order $6$ and lies on the generic part of the stratum.  Their hull profiles and their groups $H_3$ agree, so neither datum, nor the two together, determines the $\overline{\F}_q$-isomorphism class of the curve.  The weight distributions of the codes do separate them, in accordance with the expectation of \cite[Section~6]{mezinaj-shaska-hulls} that monomial equivalence of the whole filtration determines the curve up to isomorphism; that statement concerns monomial equivalence of the codes rather than equality of hull dimensions.

\paragraph{The automorphism group across the degree range.}
\cref{cor-character-signature,cor-compatible-classification} assume that the equivalence respects the regular action.  By \cref{thm-filtration} this is automatic for the whole filtration, and by \cref{thm-lowdegree} it fails for the codes of functions of $x$.  Conditions under which every automorphism of an algebraic geometry code is induced by an automorphism of the curve are given in \cite{st-aut,jo-ks,gi-ko}; the case of $C_{a,b}$ curves is treated in \cite{sh-wa}.  On a regular stratum the question can be pushed through the whole degree range.  Assume $d \geq n$ for the rest of this section, so that the weights $n/c$ of $x$ and $d/c$ of $y$ satisfy $n/c \leq d/c$.  Put
\[
s_{\max} = \frac{n}{c} ( m - 1 ) + \frac{d}{c} ( n - 1 ) ,
\]
the largest weight in the box $[0, m-1] \times [0, n-1]$, so that $\CC_s = \F_q^N$ if and only if $s \geq s_{\max}$, and for $0 \leq s < s_{\max}$ put $s' = s_{\max} - 1 - s$ and $\psi = (1, 1) \in \widehat H_m$.

\begin{lem}[Duality]\label{lem-top-duality}
On a regular stratum, $\CC_s^\perp = \CC_{s'} D_\psi$ for $0 \leq s < s_{\max}$.  Hence $\PAut ( \CC_s ) = \PAut ( \CC_{s'} D_\psi )$, and $\CC_s^\perp$ is spanned by the functions $x y \, g$ with $g \in \CC_{s'}$.
\end{lem}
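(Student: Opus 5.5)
The plan is to work entirely in the character description of \cref{prop-grid} and compute $\CC_s^\perp$ through \cref{eq-character} and the argument already used in the proof of \cref{thm-distance}.

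First, by \cref{eq-character}, the dual of $C_T$ is $\bigoplus_{\chi \notin -T} V_\chi$. The proof of \cref{thm-distance} shows that the complement of $-T$ in $\widehat H$ equals $U + (1,1)$, where $U = \{ (m-1-i, n-1-j) : (i,j) \notin T \}$. This identity is obtained by writing $-(i,j) = (m-1-i, n-1-j) + (1,1)$ modulo $(m,n)$.

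Next, take $T = T_s$ with $0 \le s < s_{\max}$. Then $(i,j) \notin T_s$ means $ni + dj > cs$. The reflected point $(i',j') = (m-1-i, n-1-j)$ has weight
\[
n i' + d j' = c\, s_{\max} - (ni + dj).
\]
So $ni + dj \ge cs + 1$ is equivalent to $n i' + d j' \le c s_{\max} - cs - 1$.

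The main obstacle is to identify this last condition with $n i' + d j' \le c s'$, where $c s' = c s_{\max} - c - cs$. Both weights $ni+dj$ and $c\, s_{\max}$ are multiples of $c$, since $c$ divides $n$ and $d$. A multiple of $c$ exceeds $cs$ exactly when it is at least $cs + c$. Hence $ni + dj > cs$ is equivalent to $n i' + d j' \le c s_{\max} - cs - c = c s'$. This shows $U = T_{s'}$ as subsets of the box, using the least-representative description of \cref{prop-grid}. The inequality $s < s_{\max}$ gives $s' \ge 0$.

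This yields $\CC_s^\perp = C_{T_{s'} + \psi}$. By \cref{thm-normalizer} with $b = 0$ and $\alpha = \mathrm{id}$, this equals $C_{T_{s'}} D_\psi = \CC_{s'} D_\psi$.

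The statement about $\PAut$ then follows formally. A permutation matrix preserves $C$ if and only if it preserves $C^\perp$, because permutation matrices are orthogonal, as recalled in \cref{sec-equivalence}. Therefore $\PAut(\CC_s) = \PAut(\CC_s^\perp) = \PAut(\CC_{s'} D_\psi)$.

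Finally, the diagonal $D_\psi$ is coordinatewise multiplication by the vector $(\lambda\zeta)_{(\lambda,\zeta)}$. In the indexing of \cref{prop-grid}, this vector is $(ab)^{-1}$ times the evaluation vector of $xy$. Since the constant $(ab)^{-1}$ is irrelevant for the span, $\CC_s^\perp$ is spanned by the evaluations of $xy\,g$ with $g \in \CC_{s'}$.
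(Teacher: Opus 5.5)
Your proof is correct and follows the paper's own argument: you write $\widehat H_m \setminus (-T_s)$ as the reflected complement of $T_s$ shifted by $\psi = (1,1)$, observe that the reflection sends weight $w$ to $s_{\max} - w$, so the reflected complement is $T_{s'}$, and conclude with \cref{thm-normalizer}. You also make explicit two steps the paper leaves implicit: the integrality argument via multiples of $c$ (the paper phrases this with integer-valued normalized weights), and the equality $\PAut(\CC_s) = \PAut(\CC_s^\perp)$, which you justify by the orthogonality of permutation matrices.
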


\begin{proof}
By \cref{eq-character}, $\CC_s^\perp = C_{\widehat H_m \setminus ( - T_s )}$, and $\widehat H_m \setminus ( - T_s ) = - ( \widehat H_m \setminus T_s )$.  The reflection $(i, j) \mapsto ( m - 1 - i, n - 1 - j )$ of the box sends the weight $w$ to $s_{\max} - w$, hence maps $\widehat H_m \setminus T_s = \{ \wt > s \}$ bijectively onto $\{ \wt \leq s_{\max} - s - 1 \} = T_{s'}$.  Since $- (i, j) = ( m - 1 - i, n - 1 - j ) + (1, 1)$ in $\Z_m \times \Z_n$, this gives $\widehat H_m \setminus ( - T_s ) = T_{s'} + \psi$, and $C_{T + \psi} = C_T D_\psi$ by \cref{thm-normalizer}.  The character $\chi_\psi$ is $xy / (ab)$.
\end{proof}

For $s \geq d / c$ the code $\CC_s$ contains $x$ and $y$, and the lemma expresses $\PAut ( \CC_s )$ through the twisted code $\CC_{s'} D_\psi$, whose untwisted part is a code of functions of $x$ exactly when $s' < d / c$.  In that range the group is determined.

\begin{thm}[Top of the filtration]\label{thm-top-range}
Let $d / c \leq s < s_{\max}$ on a regular stratum, and put $g = \gcd (m, n)$, $\ell = N / g$, $r' = \min \{ m - 1, \lfloor c s' / n \rfloor \}$, and $X_0 = a \mu_m$.
\begin{enumerate}
\item If $s' < n / c$, then $\CC_s^\perp$ is the line spanned by the vector $( x (P) y (P) )_{P \in \YY}$, and $\PAut ( \CC_s )$ is the group of permutations that map every coset of $K = \ker \chi_\psi$ onto a coset of $K$ and induce on $H_m / K \simeq \mu_\ell$ the multiplication by an element of $\mu_\ell$.  Its order is $( g ! )^\ell \, \ell$.  If $g = 1$, then $\PAut ( \CC_s ) = H_m = H_m \rtimes \Stab_{\Aut (H_m)} ( T_s )$.  If $g \geq 2$ and $N \geq 5$, then $\PAut ( \CC_s )$ contains a transposition and is not contained in the normalizer of $H_m$.
\item If $n / c \leq s' < d / c$, then every $\pi \in \PAut ( \CC_s )$ preserves the fibres of $x$ and acts on each fibre by a rotation:
\[
\pi ( x, y ) = ( \bar \pi (x), \lambda (x) \, y ) , \qquad \bar \pi \in \Sym ( X_0 ) , \quad \lambda : X_0 \to \mu_n .
\]
If $r' = m - 1$, every such permutation is in $\PAut ( \CC_s )$, so $| \PAut ( \CC_s ) | = m ! \, n^m$, and for $m \geq 3$ the group is not contained in the normalizer of $H_m$.  If $r' \leq m - 2$, then $\pi \mapsto ( \bar \pi, h )$ with $h (x) = \bar \pi (x) \lambda (x) / x$ is an isomorphism of $\PAut ( \CC_s )$ onto the group of monomial automorphisms $( \bar \pi, h )$ of the code $A_{r'} = \langle \one, x, \dots, x^{r'} \rangle \subseteq \F_q^{X_0}$ that satisfy $x h (x) / \bar \pi (x) \in \mu_n$ for all $x \in X_0$.
\end{enumerate}
\end{thm}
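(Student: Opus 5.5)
The whole argument runs through \cref{lem-top-duality}: $\PAut(\CC_s) = \PAut(\CC_s^\perp)$ and $\CC_s^\perp = \CC_{s'} D_\psi$ is spanned by the vectors $xy\,g$ with $g \in \CC_{s'}$. Since $s' < d/c$, the code $\CC_{s'}$ consists of functions of $x$ alone, namely $A_{r'} = \langle \one, x, \dots, x^{r'} \rangle$ pulled back along the fibres. The plan is therefore to describe the permutations preserving the twisted code $\{ xy\, g(x) \}$ in each of the two ranges.

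\emph{Case (1).} Here $s' < n/c$, so $\CC_{s'} = \langle \one \rangle$ and $\CC_s^\perp = \langle v \rangle$ with $v = (x(P)y(P))_P$. A permutation preserves a line spanned by a vector with all entries nonzero exactly when it sends $v$ to $\kappa v$ for some scalar $\kappa$.
\begin{itemize}
\item The values of $v$ are $ab\,\chi_\psi(h)$, and $\chi_\psi$ has image $\mu_\ell$ with $\ell = \operatorname{lcm}(m,n) = N/g$. So the level sets of $v$ are the cosets of $K$, each of size $g$.
\item The condition $v \circ \pi = \kappa v$ forces $\kappa \in \mu_\ell$ (because the value set $ab\,\mu_\ell$ is preserved), and $\pi$ must send each coset to the coset obtained by multiplication by $\kappa$.
\item Counting: there are $\ell$ choices of $\kappa$ and $(g!)^\ell$ choices of the bijections inside the cosets, giving $(g!)^\ell\,\ell$.
\item If $g = 1$, then $K$ is trivial and the group has order $N$; it contains $H_m$, so it equals $H_m$. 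The stabilizer of $T_s$ is then trivial by the same count, via \cref{cor-compatible-classification}.
\item If $g \geq 2$, swapping two points of one coset gives a transposition. For $N \geq 5$ a transposition does not normalize a regular subgroup: conjugating a translation $t$ by it moves $t$ at no more than four points and so cannot yield a translation, since translations other than the identity have no fixed points and $N \geq 5$.
\end{itemize}

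\emph{Case (2).} Now $x \in \CC_{s'}$, so $\CC_s^\perp \supseteq \langle xy, x^2 y \rangle$. I first recover the fibres of $x$ intrinsically, as the classes of coordinates $i,j$ on which the ratio $w_i/v_i$ is the same for every $w \in \CC_s^\perp$. These classes are the fibres of $x$ because the ratios are the functions $g(x)$ with $x \in \CC_{s'}$, and $x$ separates the points of $X_0$. The classes are defined intrinsically, so every $\pi \in \PAut(\CC_s)$ preserves them.

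Next, $v \circ \pi$ lies in $\CC_s^\perp$, so $v \circ \pi = v \cdot G$ for some $G \in A_{r'}$. On each fibre this reads $\bar\pi(x)\,y(\pi P) = x\,y(P)\,G(x)$. Hence $\pi$ acts on the fibre by $y \mapsto \lambda(x) y$ with $\lambda(x) = x G(x)/\bar\pi(x)$, and $\lambda(x) \in \mu_n$ because $Y_0 = b\mu_n$ is carried onto itself. Applying the same identity to the remaining generators $xy\,x^i$ shows that $\PAut(\CC_s)$ corresponds exactly to the pairs $(\bar\pi, h)$ with $h = G$ a monomial automorphism of $A_{r'}$, i.e.\ $(x^i h)\circ\bar\pi^{-1}$-type conditions hold for every $i \leq r'$, together with the constraint $x h(x)/\bar\pi(x) \in \mu_n$.

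In this correspondence I still have to check the direction conventions and that $h$ is exactly $\bar\pi(x)\lambda(x)/x$. Injectivity holds because $\pi$ is determined by $\bar\pi$ and $\lambda$. Surjectivity holds because any admissible pair defines a fibrewise rotation preserving $\CC_s^\perp$.

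When $r' = m - 1$, the code $A_{r'}$ is all of $\F_q^{X_0}$, so every pair $(\bar\pi, \lambda)$ is admissible. This gives the order $m!\,n^m$. For $m \geq 3$ a transposition of two fibres with $\lambda = 1$ fails to normalize $H_m$, by the same argument as in \cref{thm-lowdegree}.

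I expect the main obstacle to be this bookkeeping in case (2). I must show that the fibre classes really are preserved, verify that $\lambda$ is rotation-valued rather than an arbitrary bijection of $Y_0$, and match $h$ precisely with the monomial action on $A_{r'}$ without sign or inverse errors.
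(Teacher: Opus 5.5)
Most of this works. Case (1) is the paper's argument. In case (2) you recover the fibres of $x$ by a simpler route than the paper's Schur power $(\CC_s^\perp)^{\ast k}=\{y\,g(x)\}$. The relation ``$w_i/v_i=w_j/v_j$ for all $w\in\CC_s^\perp$'' is proportionality of the $i$th and $j$th coordinate functionals, so it does not depend on the nowhere-zero $v$ and is preserved by $\PAut(\CC_s^\perp)$; since $x\in A_{r'}$, its classes are the fibres. Your bookkeeping worry also dissolves. From $\lambda=xG/\bar\pi$ you get $G=\bar\pi\lambda/x=h$, and $(xy\,g)\circ\pi=xy\,h\,(g\circ\bar\pi)$ gives the correspondence. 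You should add that $\pi\mapsto(\bar\pi,h)$ is a homomorphism, because it is the action of $\pi$ on $xy\,\F_q^{X_0}$ transported by $g\mapsto xy\,g$.

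The step that fails is the claim that, for $r'=m-1$ and $m\geq 3$, a transposition of two fibres with $\lambda=1$ does not normalize $H_m$. For $m=3$ every transposition of $\Z_3$ is affine, $u\mapsto 2u_0-u$. So in the coordinates $(u,v)\in\Z_3\times\Z_n$ your permutation is $P_{b,\alpha}$ with $b=(2u_0,0)$ and $\alpha(u,v)=(-u,v)\in\Aut(H_m)$, which lies in the normalizer. The same happens for $m=4$ if the two fibres are antipodal ($u\mapsto 2-u$ on $\Z_4$ swaps $0,2$ and fixes $1,3$). The argument of \cref{thm-lowdegree} does not transfer either. It conjugates by a transposition inside one fibre, which is not a rotation unless $n=2$. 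Conjugating $\sigma_\lambda$ by your fibre transposition only conjugates an $m$-cycle, and for $m=3$ this gives $\sigma_\lambda^{-1}\in H_m$.

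The repair is to use the rotation of a single fibre ($\bar\pi=\mathrm{id}$, $\lambda$ nontrivial at exactly one abscissa). It fixes $N-n>N/2$ points when $m\geq 3$. A nonidentity $P_{b,\alpha}$, by contrast, fixes either no point or a coset of the proper subgroup $\ker(1-\alpha)$, hence at most $N/2$ points.

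A smaller imprecision of the same kind sits in case (1). If the translation by $u_2-u_1$ has order two, it commutes with the transposition $(u_1\,u_2)$, so ``conjugating cannot yield a translation'' is false for it. You must conjugate a translation $t$ with $t(u_1)\neq u_2$, which exists since $N\geq 3$. Then $\tau t\tau\neq t$ while agreeing with $t$ on at least $N-4\geq 1$ points, so it is not a translation.
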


\begin{proof}
The weights of $(1, 0)$ and $(0, 1)$ are $n / c$ and $d / c$, so $T_{s'} = \{ (0, 0) \}$ when $s' < n / c$, and $T_{s'} = \{ (i, 0) : 0 \leq i \leq r' \}$ with $r' \geq 1$ when $n / c \leq s' < d / c$.

(1) By \cref{lem-top-duality}, $\CC_s^\perp = \langle v \rangle$ with $v = ( x (P) y (P) )_P$, and $v = a b \, \chi_\psi$ takes its values in $a b \mu_\ell$, each value on one coset of $K$, which has order $g$.  A permutation $\pi$ preserves $\langle v \rangle$ if and only if $v \circ \pi = t v$ for a scalar $t$; then $t \in \mu_\ell$, and $\pi$ maps the level set of the value $w$ onto that of $w / t$; conversely every such $\pi$ preserves $\langle v \rangle$.  For each of the $\ell$ values of $t$ there are $( g ! )^\ell$ choices of $\pi$.  If $g = 1$, then $\chi_\psi$ is an isomorphism $H_m \to \mu_N$ and $\pi = \chi_\psi^{-1} \circ ( t \, \cdot ) \circ \chi_\psi$ is the translation by $\chi_\psi^{-1} (t)$, so $\PAut ( \CC_s ) = H_m$; moreover $\Stab_{\Aut (H_m)} ( T_s )$ is the stabilizer of $\widehat H_m \setminus ( - T_s ) = \{ \psi \}$, which is trivial since $\psi$ generates the cyclic group $\widehat H_m$.  If $g \geq 2$, the transposition of two points of one coset of $K$ preserves $\langle v \rangle$.  An element $P_{b, \alpha}$ of the normalizer, $u \mapsto b + \alpha (u)$, has fixed set $\{ u : ( 1 - \alpha ) (u) = b \}$, empty or a coset of $\ker ( 1 - \alpha )$; if it fixes $N - 2 > N / 2$ points, then $\ker ( 1 - \alpha ) = H_m$, $\alpha = 1$, and $b = 0$.  So no transposition normalizes $H_m$ when $N \geq 5$.

(2) By \cref{lem-top-duality}, $\CC_s^\perp = \{ x y \, g (x) : g \in A_{r'} \}$.  On the grid, $C_T \ast C_{T'} = C_{T + T'}$, since $\chi \chi' = \chi_{ \chi + \chi'}$.  Choose $k \equiv 1 \pmod n$ with $k r' \geq m - 1$.  Then $k T_{s'} \supseteq \{ (i, 0) : 0 \leq i \leq k r' \} = \Z_m \times \{ 0 \}$, so $( \CC_s^\perp )^{\ast k} = C_{ \Z_m \times \{ k \} } = C_{ \Z_m \times \{ 1 \} }$, which is the code $W = \{ y \, g (x) : g \in \F_q^{X_0} \}$.  Since $\PAut (C) \subseteq \PAut ( C^{\ast k} )$ for every $C$, $\PAut ( \CC_s ) = \PAut ( \CC_s^\perp ) \subseteq \PAut (W)$.  Let $\pi \in \PAut (W)$.  Taking $g = \one$ gives $y \circ \pi = y \, g_0 (x)$ for a nowhere-zero $g_0$, and then $( y \, g ) \circ \pi = y \, g_0 (x) \, g ( x \circ \pi ) \in W$ for every $g$ forces $x \circ \pi$ to be a function of $x$.  So $\pi$ preserves the fibres of $x$ and acts on the fibre over $x$ by $y \mapsto g_0 (x) \, y$, with $g_0 (x) \in \mu_n$ since $\pi$ permutes $Y_0 = b \mu_n$.  Conversely every $( \bar \pi, \lambda )$ preserves $W$.  If $r' = m - 1$, then $A_{r'} = \F_q^{X_0}$, $\CC_s^\perp = W$, and $\PAut ( \CC_s ) = \PAut (W)$ has order $m ! \, n^m$; a rotation of a single fibre fixes $N - n \geq 2 N / 3 > N / 2$ points when $m \geq 3$, so by the fixed-point argument of (1) it does not normalize $H_m$.  If $r' \leq m - 2$, then for $\pi = ( \bar \pi, \lambda )$ and $g \in A_{r'}$,
\[
( x y \, g ) \circ \pi = \bar \pi (x) \, \lambda (x) \, y \, g ( \bar \pi (x) ) = x y \cdot h (x) \, g ( \bar \pi (x) ) ,
\]
so $\pi \in \PAut ( \CC_s^\perp )$ if and only if $g \mapsto h \cdot ( g \circ \bar \pi )$ maps $A_{r'}$ onto itself, that is, $( \bar \pi, h )$ is a monomial automorphism of $A_{r'}$.  The assignment $\pi \mapsto ( \bar \pi, h )$ is the action of $\pi$ on $\CC_s^\perp$ transported by the bijection $g \mapsto x y \, g$, hence a homomorphism; it is injective because $x y$ and $x^2 y$ lie in $\CC_s^\perp$ and separate the points of $\YY$; and every $( \bar \pi, h )$ with $x h / \bar \pi \in \mu_n$ on $X_0$ arises from $\lambda = x h / \bar \pi$.
\end{proof}

\begin{cor}\label{cor-paut-exceptions}
On a regular stratum, $\PAut ( \CC_s ) \neq H_m \rtimes \Stab_{\Aut (H_m)} ( T_s )$ for the following degrees $s \geq d / c$ with $\CC_s \neq \F_q^N$: the degrees $s \geq s_{\max} - n / c$ when $\gcd (m, n) \geq 2$ and $N \geq 5$, and the degrees with $n / c \leq s' < d / c$ and $\lfloor c s' / n \rfloor \geq m - 1$ when $m \geq 3$; degrees of the second kind exist if and only if $n \geq 3$ and $d - c \geq n ( m - 1 )$.
\end{cor}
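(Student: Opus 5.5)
The corollary follows from \cref{thm-top-range} by comparing orders. By \cref{cor-compatible-classification}, the group $H_m \rtimes \Stab_{\Aut (H_m)} ( T_s )$ is exactly the intersection of $\PAut ( \CC_s )$ with the normalizer of $H_m$, so it lies inside the normalizer. It therefore suffices to find, in each listed case, an element of $\PAut ( \CC_s )$ that does not normalize $H_m$.

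\emph{Degrees of the first kind.} Here $s \geq s_{\max} - n/c$ and $s < s_{\max}$. This is the same as $0 \leq s' < n/c$, so \cref{thm-top-range}(1) applies. I must also check that such degrees satisfy $s \geq d/c$. Because $m \geq 2$ and $n \geq 2$, we have
\[
s_{\max} - n/c = \tfrac{n}{c}(m-2) + \tfrac{d}{c}(n-1) \geq d/c .
\]
When $g \geq 2$ and $N \geq 5$, part (1) supplies a transposition in $\PAut ( \CC_s )$ that is not in the normalizer, so the inequality holds.

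\emph{Degrees of the second kind.} Here $n/c \leq s' < d/c$, and the condition $\lfloor c s' / n \rfloor \geq m-1$ means $r' = m - 1$. Part (2) then gives a rotation of a single fibre that does not normalize $H_m$ when $m \geq 3$. Again I need $s \geq d/c$. From $s' < d/c$ we get $s = s_{\max} - 1 - s' > s_{\max} - 1 - d/c$, and since $n \geq 2$ this is at least $\tfrac{n}{c}(m-1) + \tfrac{d}{c} - 1 \geq d/c$ when $m \geq 2$. The condition $\CC_s \neq \F_q^N$ follows from $s < s_{\max}$, which holds because $s' \geq 0$.

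\emph{Existence criterion.} This is the main computational step. We need an integer $s'$ with $c s' \geq n(m-1)$, which already implies $s' \geq n/c$ as $m \geq 2$, together with $c s' < d$, that is, $c s' \leq d - 1$. The integer $c s'$ ranges over the multiples of $c$. Since $c \mid n$, the number $n(m-1)$ is itself a multiple of $c$. The largest multiple of $c$ that is at most $d - 1$ is $d - c$, because $c \mid d$. So such an $s'$ exists if and only if $n(m-1) \leq d - c$.

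It remains to tie in the condition on $n$. The hypothesis is really $m \geq 3$ together with $d - c \geq n(m-1)$, and on a regular stratum $m \mid d$. If $n = 2$, then $c = \gcd(2,d) \leq 2$, and $d - c \geq 2(m-1)$ is still possible: for example $d = 12$, $m = 3$. I therefore expect the claimed ``if and only if $n \geq 3$'' to rest on a further constraint that comes from the standing assumption $d \geq n$ and from the regular-stratum structure. This is the step I am least sure of.

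To settle it, I would first recheck whether the regular-stratum hypotheses force $m = d$ or some bound like $m \geq d/2$ when $n = 2$, which would give $d - c < 2(m-1)$. Failing that, I would read the statement as a condition on $(n, d, m)$ and verify the equivalence case by case: $n = 2$ against $n \geq 3$, using $c = \gcd(n, d)$ and $m \mid d$.
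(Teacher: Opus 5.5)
Your reduction matches the paper's. $H_m \rtimes \Stab_{\Aut(H_m)}(T_s)$ lies in the normalizer of $H_m$, and \cref{thm-top-range} supplies elements of $\PAut(\CC_s)$ outside that normalizer: a transposition in case (1), and a single-fibre rotation in case (2) with $r' = m-1$. Your count giving $n(m-1) \leq d - c$ is also correct.

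The gap is your claim that $s \geq d/c$ holds automatically for degrees of the second kind. In fact
\[
s_{\max} - 1 - \frac{d}{c} = \frac{n}{c}(m-1) + \frac{d}{c}(n-2) - 1 .
\]
This is at least $\frac{n}{c}(m-1) + \frac{d}{c} - 1$ only for $n \geq 3$; for $n = 2$ it equals $\frac{2}{c}(m-1) - 1$. The condition $s \geq d/c$ is a hypothesis of \cref{thm-top-range} and of the statement. It is equivalent to $s' \leq s_{\max} - 1 - d/c$, that is, $cs' \leq n(m-1) + d(n-2) - c$. You dropped this third constraint, and it is exactly where the condition $n \geq 3$ comes from. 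Together with $cs' \geq n(m-1)$ it is satisfiable if and only if $d(n-2) \geq c$, that is, $n \geq 3$. No further property of regular strata is needed, contrary to what your last paragraph suggests.

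Your own example shows this. For $n = 2$, $d = 12$, $m = 3$ one has $c = 2$ and $s_{\max} = 8$, and the admissible values are $s' = 2, \dots, 5$. The corresponding degrees $s = 7 - s'$ are $2, \dots, 5$, all below $d/c = 6$. These are codes of functions of $x$, so they lie outside the range of both the corollary and \cref{thm-top-range}.

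Once you keep the constraint $s \geq d/c$, the existence criterion follows as in the paper. A degree of the second kind forces $n(m-1) \leq cs' \leq d - c$ and $d(n-2) \geq c$. Conversely, if $n \geq 3$ and $n(m-1) \leq d - c$, the integer $s' = n(m-1)/c$ satisfies all three bounds.
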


\begin{proof}
Both groups in \cref{thm-top-range} contain $H_m$ and are not contained in its normalizer, while $H_m \rtimes \Stab$ is.  The condition $s \geq d / c$ reads $s' \leq s_{\max} - 1 - d / c = \frac{n}{c} ( m - 1 ) + \frac{d}{c} ( n - 2 ) - 1$, and $\lfloor c s' / n \rfloor \geq m - 1$ reads $c s' \geq n ( m - 1 )$.  A degree of the second kind therefore exists if and only if $n ( m - 1 ) / c \leq \min \{ d / c - 1 , \, \frac{n}{c} ( m - 1 ) + \frac{d}{c} ( n - 2 ) - 1 \}$.  The first bound is $d - c \geq n ( m - 1 )$, and the second is $\frac{d}{c} ( n - 2 ) \geq 1$, that is, $n \geq 3$.
\end{proof}

On $y^2 = x^6 + 3$ over $\F_7$ the code $\CC_7$ has $| \PAut | = 2^6 \cdot 6 = 384$ against $| H_6 \rtimes \Stab | = 24$; on $y^3 = x^6 + 5$ over $\F_7$ the code $\CC_8$ has $| \PAut | = 6^6 \cdot 6 = 279936$ against $108$.  The curve $y^3 = x^6 + 3 x^2 + 4$ over $\F_7$ lies on the regular stratum $(n, d, m) = (3, 6, 2)$ with $N = 6$ and $g_\XX = 4$, and $d - c = 3 = n ( m - 1 )$; its code $\CC_3$ has $\CC_3^\perp = y \, \F_q^{X_0}$ and $| \PAut ( \CC_3 ) | = 2 ! \cdot 3^2 = 18$, while $| H_2 \rtimes \Stab ( T_3 ) | = 6$.  Here $m = 2$, and the group of order $18$ exceeds the order $12$ of the normalizer.

For $r' = 1$ the monomial automorphisms of $A_1 = \langle \one, x \rangle$ are explicit, and part (2) closes.

\begin{prop}\label{prop-top-r-one}
Let $m \geq 3$, $n / c \leq s' < \min \{ 2 n / c, \, d / c \}$, so that $r' = 1$, and suppose that the stabilizer of $X_0 = a \mu_m$ in $\PGL_2 ( \F_q )$ is the dihedral group generated by $x \mapsto \zeta x$, $\zeta \in \mu_m$, and $x \mapsto a^2 / x$.  Then $\PAut ( \CC_s ) = H_m \rtimes \Stab_{\Aut (H_m)} ( T_s )$, where $| \Stab | = 2$ if $m / \gcd (m, 3)$ divides $n$ and $| \Stab | = 1$ otherwise.
\end{prop}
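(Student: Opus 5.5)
The plan is to specialize \cref{thm-top-range}(2) to $r' = 1$, make the monomial automorphisms of $A_1 = \langle \one, x \rangle$ explicit, and then match the resulting group with $H_m \rtimes \Stab_{\Aut (H_m)} ( T_s )$ by comparing orders.

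Since $m \geq 3$ and $r' = 1 \leq m - 2$, the second alternative of \cref{thm-top-range}(2) applies. It identifies $\PAut ( \CC_s )$ with the group of pairs $( \bar \pi, h )$ that satisfy two conditions:
\begin{enumerate}
\item[(i)] $g \mapsto h \cdot ( g \circ \bar \pi )$ maps $A_1$ onto itself;
\item[(ii)] $x h (x) / \bar \pi (x) \in \mu_n$ on $X_0$.
\end{enumerate}

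\emph{Step 1: monomial automorphisms of $A_1$.} Applying (i) to $g = \one$ gives $h = \alpha x + \beta$. Applying it to $g = x$ gives $h \, \bar \pi = \gamma x + \delta$. Since $h$ is nowhere zero on $X_0$, we get $\bar \pi (x) = ( \gamma x + \delta ) / ( \alpha x + \beta )$ on $X_0$. The matrix is invertible, because otherwise $\bar \pi$ would be constant, and $| X_0 | = m \geq 3$. So $\bar \pi$ is the restriction of an element of $\PGL_2 ( \F_q )$ stabilizing $X_0$. Because $m \geq 3$, this Möbius map is determined by $\bar \pi$, and the pair $(\alpha, \beta)$ is determined up to the common scalar fixed by $h$. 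By hypothesis only two shapes are possible.

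\emph{Step 2: the two cases.} First take $\bar \pi (x) = \zeta x$ with $\zeta \in \mu_m$. Then $h$ must be a constant $\beta$, and (ii) reads $\beta / \zeta \in \mu_n$. The corresponding $\lambda = x h / \bar \pi$ is then constant in $\mu_n$, so these permutations are exactly $( x, y ) \mapsto ( \zeta x, \lambda y )$, which is $H_m$; this case contributes $N$ elements. Next take $\bar \pi (x) = \zeta a^2 / x$. Then $h = \alpha x$ with $\alpha \neq 0$, and (ii) becomes
\[
\alpha x^3 / ( \zeta a^2 ) \in \mu_n \quad \text{for all } x = a u, \ u \in \mu_m ,
\]
that is, $\alpha a u^3 / \zeta \in \mu_n$ for every $u \in \mu_m$. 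Solutions $\alpha$ exist if and only if all ratios $u^3$ lie in $\mu_n$, i.e.\ $\mu_m^3 = \mu_{m / \gcd (m, 3)} \subseteq \mu_n$, i.e.\ $m / \gcd (m, 3) \mid n$. When they exist, the admissible $\alpha$ form a single $\mu_n$-coset, and this case contributes another $N$ elements. Hence $| \PAut ( \CC_s ) | = 2N$ or $N$ according to the divisibility condition.

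\emph{Step 3: matching with the stabilizer.} By \cref{cor-compatible-classification}, $H_m \rtimes \Stab_{\Aut (H_m)} ( T_s ) = N_{\Sym} ( \rho (H_m) ) \cap \PAut ( \CC_s ) \subseteq \PAut ( \CC_s )$. It therefore suffices to show that $| \Stab |$ is $2$ exactly under the divisibility condition and $1$ otherwise. An automorphism stabilizes $T_s$ if and only if it stabilizes $- ( \widehat H_m \setminus T_s ) = T_{s'} + \psi = \{ (1, 1), (2, 1) \}$, using \cref{lem-top-duality} and $T_{s'} = \{ (0,0), (1,0) \}$. The identity is one such automorphism. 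The only other candidate is the swap $(1,1) \leftrightarrow (2,1)$. The swap forces $(1,0) \mapsto (-1, 0)$ and $(0,1) \mapsto (3, 1)$, and one must check that this defines an automorphism of $\Z_m \times \Z_n$. This is the main obstacle. The order of $(3,1)$ must be $n$, which requires $m / \gcd (m, 3) \mid n$. Conversely, under that condition the map $(i, j) \mapsto ( -i + 3j, j )$ is well defined and bijective. So $| \Stab | \in \{ 1, 2 \}$ matches the index of $H_m$ in $\PAut ( \CC_s )$ found in Step 2, and the inclusion is an equality.
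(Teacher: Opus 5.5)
Your proof is correct. It agrees with the paper through the reduction to the two Möbius shapes, and then finishes by a different identification step.

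\textbf{Step 1.} Both proofs specialize the case $r' \leq m - 2$ of \cref{thm-top-range}(2) and show that $\bar \pi$ is $x \mapsto \zeta x$ or $x \mapsto \zeta a^2 / x$, with $h$ a scalar multiple of the denominator. You get the uniqueness of $h$ from the fact that a Möbius map is determined by its values at three points. The paper uses a polynomial divisibility argument instead. Your version is slightly cleaner.

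\textbf{Final step.} The paper writes the $N$ inversion-type permutations in coordinates as $(u, v) \mapsto (u_0 - u, \, v_0 + e u + v)$. From this it concludes $\PAut ( \CC_s ) \subseteq H_m \rtimes \Aut ( H_m )$, gets $\PAut ( \CC_s ) = H_m \rtimes \Stab ( T_s )$ from \cref{cor-compatible-classification}, and reads off $| \Stab |$ from the count. You instead compute the stabilizer directly on the character side:
\begin{itemize}
\item by \cref{lem-top-duality} it is the stabilizer of the two-element set $\{ (1, 1), (2, 1) \}$;
\item fixing both points forces the identity;
\item the swap forces $(i, j) \mapsto ( -i + 3j, j )$, which is a well-defined involution exactly when $m \mid 3n$, i.e.\ $m / \gcd (m, 3) \mid n$.
\end{itemize}
The inclusion $H_m \rtimes \Stab \subseteq \PAut ( \CC_s )$ and the order count from Step 2 then force equality.

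\textbf{What each route buys.} Your route needs Step 2 only as an upper bound on $| \PAut ( \CC_s ) |$. So the converse check that the pairs $( \zeta a^2 / x, \, \alpha x )$ really preserve $A_1$, which you left implicit, is not needed. It also exhibits the nontrivial stabilizer element explicitly. The paper's route shows directly that every automorphism normalizes $H_m$, and it gives the extra permutations on the point set, which is the form used in the concluding discussion.
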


\begin{proof}
Let $( \bar \pi, h )$ be a monomial automorphism of $A_1$.  Then $h = \alpha + \beta x$ and $h \, \bar \pi = \gamma + \delta x$ for constants with $h$ nowhere zero on $X_0$, so $\bar \pi = ( \gamma + \delta x ) / ( \alpha + \beta x )$ on $X_0$ with $\alpha \delta - \beta \gamma \neq 0$, because $\bar \pi$ is not constant.  Thus $\bar \pi$ is the restriction of an element $\mu$ of the stabilizer of $X_0$ in $\PGL_2 ( \F_q )$, and $h$ is a scalar multiple of the denominator of $\mu$: if $\beta \neq 0$, the identity $( \alpha + \beta x ) \mid ( \epsilon + \eta x ) ( \gamma + \delta x )$ for a second candidate $h' = \epsilon + \eta x$ holds as polynomials since $m \geq 3$, and forces $( \alpha + \beta x ) \mid ( \epsilon + \eta x )$; if $\beta = 0$, then $h' \bar \pi \in A_1$ forces $h'$ constant.  Conversely every $\mu$ in the stabilizer with $h = t \cdot \mathrm{den} ( \mu )$ is a monomial automorphism of $A_1$.  By \cref{thm-top-range}, $\PAut ( \CC_s )$ is the set of pairs $( \mu, t )$ with $t \, x \, \mathrm{den} ( \mu ) (x) / \mu (x) \in \mu_n$ for all $x \in X_0$.  For $\mu = \zeta x$ the condition reads $t / \zeta \in \mu_n$, giving $n$ values of $t$ for each $\zeta$, and the corresponding $\pi$ are the translations: $\PAut ( \CC_s ) \supseteq H_m$ with these $N$ elements.  For $\mu = \zeta a^2 / x$ the condition reads $t x^3 / ( \zeta a^2 ) \in \mu_n$ for all $x \in a \mu_m$, that is, $( x / a )^3 \in ( \zeta / ( t a ) ) \mu_n$; as $x$ runs over $a \mu_m$, $( x / a )^3$ runs over $\mu_{ m / \gcd (m, 3) }$, which contains $1$, so the condition holds for some $t$ if and only if $\mu_{ m / \gcd (m, 3) } \subseteq \mu_n$, and then for $n$ values of $t$ for each $\zeta$.  Writing $x = a \omega^u$, $y = b \eta^v$ with $\omega$, $\eta$ generators of $\mu_m$, $\mu_n$, and $\omega^3 = \eta^e$, these $N$ permutations are $( u, v ) \mapsto ( u_0 - u, \, v_0 + e u + v )$, elements of $H_m \rtimes \Aut ( H_m )$.  So $\PAut ( \CC_s ) \subseteq H_m \rtimes \Aut ( H_m )$, and \cref{cor-compatible-classification} gives $\PAut ( \CC_s ) = H_m \rtimes \Stab ( T_s )$ with $| \Stab |$ equal to $2$ or $1$ as stated.
\end{proof}

For the monomial automorphism groups of Reed--Solomon codes in general see \cite{duer}, and for their permutation groups over arbitrary evaluation sets see \cite{camps-lau-lopez-santos}, which shows that they are the affine maps preserving the evaluation set whenever $1 < k < n - 1$.  The hypothesis on the stabilizer holds for the abscissa sets over $\F_7$, $\F_{11}$ and $\F_{13}$ in \cref{tab-full-range} and fails in general: for $\mu_4 \subset \F_9$ the stabilizer contains the rotations of order $4$ and, since $-1 = 2 = 1/2$ in characteristic $3$, the stabilizer of order $12$ of an equianharmonic quadruple, hence has order $24$.  \cref{prop-top-r-one} predicts $| \Stab | = 2$ for $(n, m) = (2, 6)$ and $| \Stab | = 1$ for $(3, 6)$, $(2, 5)$ and $(3, 4)$, in agreement with \cref{tab-full-range}; the same computation with $r'$ in place of $1$ shows that an inversion $x \mapsto \zeta a^2 / x$ can occur in $\PAut ( \CC_s )$ for $n / c \leq s' < d / c$ only if $m / \gcd ( m, r' + 2 )$ divides $n$.

A third source of automorphisms appears when the levels of $T_s$ are trivial.  For $0 \leq j \leq n - 1$ let $I_j = \{ i : (i, j) \in T_s \} \subseteq \Z_m$ be the $j$-th level of $T_s$, so that $\CC_s = \bigoplus_j y^j A_{I_j}$ with $A_{I} = \langle x^i : i \in I \rangle \subseteq \F_q^{X_0}$.

\begin{prop}
\label{prop-thin-levels}
On a regular stratum, if every level $I_j$ of $T_s$ is $\varnothing$, $\{ 0 \}$ or $\Z_m$, then $\PAut ( \CC_s )$ contains the permutations $(x, y) \mapsto ( \bar \pi (x), \zeta y )$ for all $\bar \pi \in \Sym ( X_0 )$ and $\zeta \in \mu_n$, a group isomorphic to $S_m \times \mu_n$.  If $m ! \, n > N \, | \Aut ( H_m ) |$, this group is not contained in the normalizer of $H_m$, and $\PAut ( \CC_s ) \neq H_m \rtimes \Stab_{\Aut (H_m)} ( T_s )$.
\end{prop}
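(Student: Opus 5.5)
We use the level decomposition $\CC_s = \bigoplus_j y^j A_{I_j}$ and check that each proposed permutation preserves each summand. Fix $\bar\pi \in \Sym(X_0)$ and $\zeta \in \mu_n$, and let $\pi(x,y) = (\bar\pi(x), \zeta y)$. This map permutes $\YY = X_0 \times Y_0$ because $\zeta Y_0 = Y_0$. Pulling a word $y^j g(x)$ back along $\pi$ gives $\zeta^j y^j \, g(\bar\pi(x))$.

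The three admissible levels are each stable under composition with $\bar\pi$:
\begin{itemize}
\item if $I_j = \varnothing$ there is nothing to check;
\item if $I_j = \{0\}$, then $A_{I_j} = \langle \one \rangle$ and $\one \circ \bar\pi = \one$;
\item if $I_j = \Z_m$, then $A_{I_j} = \F_q^{X_0}$ by \cref{prop-grid}, since the characters $\chi_{i,0}$, $i \in \Z_m$, span all functions on $X_0$, and this space is stable under every permutation of $X_0$.
\end{itemize}
So $g \circ \bar\pi \in A_{I_j}$ in every case, the scalar $\zeta^j$ is harmless, and $\pi$ preserves each summand and hence $\CC_s$.

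The maps compose componentwise, so they form a group isomorphic to $\Sym(X_0) \times \mu_n \simeq S_m \times \mu_n$. The map $(\bar\pi,\zeta) \mapsto \pi$ is injective, since $\pi$ determines $\bar\pi$ through its action on abscissae and $\zeta$ through its action on any single fibre.

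For the second assertion we count. The normalizer of $H_m$ in the symmetric group on $\YY$ is the holomorph $H_m \rtimes \Aut(H_m)$, recalled before \cref{def-monomial-normalizer}, and it has order $N \, |\Aut(H_m)|$. The subgroup constructed above has order $m!\, n$. If $m!\,n > N\,|\Aut(H_m)|$, this subgroup cannot sit inside the normalizer.

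The group $H_m \rtimes \Stab_{\Aut(H_m)}(T_s)$ lies inside the normalizer by \cref{cor-compatible-classification}, so it cannot contain the subgroup either. Since $\PAut(\CC_s)$ does contain it, the two groups differ.

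No serious obstacle is expected. The only point needing care is the case $I_j = \Z_m$, where one must confirm that the span of $x^i$, $i \in \Z_m$, is the full space $\F_q^{X_0}$. This follows from \cref{prop-grid}: the $m$ characters of $\mu_m$ are linearly independent on the $m$ points of $X_0$, with the coset scaling by $a$ absorbed into scalars.
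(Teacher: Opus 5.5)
Your proof is correct and follows the paper's argument. The three admissible level spaces $A_\varnothing$, $A_{\{0\}}$, $A_{\Z_m}=\F_q^{X_0}$ are stable under $g\mapsto g\circ\bar\pi$, the factor $\zeta^j$ preserves each summand $y^jA_{I_j}$, and the count $m!\,n > N\,|\Aut(H_m)|$ against the holomorph rules out containment in the normalizer. You also spell out what the paper leaves implicit: that $(\bar\pi,\zeta)\mapsto\pi$ is injective, and that $H_m\rtimes\Stab_{\Aut(H_m)}(T_s)$ lies in the normalizer by \cref{cor-compatible-classification}.
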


\begin{proof}
The spaces $A_\varnothing = 0$, $A_{ \{ 0 \} } = \F_q \one$ and $A_{\Z_m} = \F_q^{X_0}$ are preserved by $g \mapsto g \circ \bar \pi$ for every $\bar \pi \in \Sym ( X_0 )$, and $y^j \mapsto \zeta^j y^j$ preserves each summand.  The normalizer has order $N \, | \Aut ( H_m ) |$.
\end{proof}

On $y^3 = x^8 + 2$ over $\F_{13}$, the stratum $(n, d, m) = (3, 8, 4)$ of \cref{thm-pair}, the levels of $T_9$ and $T_{10}$ are $\Z_4$, $\{ 0 \}$, and those of $T_{17}$ and $T_{18}$ are $\Z_4$, $\Z_4$, $\{ 0 \}$.  The four codes have $| \PAut | = 4 ! \cdot 3 = 72$, computed directly, against $| H_4 \rtimes \Stab | = 24$ and a normalizer of order $48$.  The degrees $9$ and $10$ satisfy $s' \geq d / c$, so this exception is not confined to the top of the filtration.

The permutations of \cref{thm-top-range} translate the cosets of a subgroup of $H_m$ independently of one another.  This mechanism has a general form.  For subgroups $K \leq L \leq H_m$ and $\eta \in \widehat K$ let $S_\eta = \{ \chi \in \widehat H_m : \chi |_K = \eta \}$, a coset of $K^\perp$, and let $L^\perp \subseteq K^\perp$ be the annihilator of $L$.

\begin{prop}
\label{prop-block-translations}
Let $K \leq L \leq H_m$ with $K \neq 1$, and let $T \subseteq \widehat H_m$.  Suppose that for every nontrivial $\eta \in \widehat K$ the set $T \cap S_\eta$ is a union of cosets of $L^\perp$.  Then for every map $\kappa : H_m / L \to K$ the permutation $\pi_\kappa : u \mapsto u + \kappa ( u + L )$ lies in $\PAut ( C_T )$, and these permutations form a group of order $| K |^{ N / | L | }$.  If $| L | < N / 2$, this group is not contained in the normalizer of $H_m$.
\end{prop}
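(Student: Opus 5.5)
The plan is to decompose $C_T$ according to the restriction of characters to $K$ and to check invariance one isotypic block at a time. For $\eta \in \widehat K$ the sets $S_\eta$ partition $\widehat H_m$, so
\[
C_T = \bigoplus_{\eta \in \widehat K} C_{T \cap S_\eta} .
\]
It therefore suffices to show that $v \mapsto v \circ \pi_\kappa$ maps each summand into $C_T$. Since the $\pi_\kappa$ will be shown to form a group, the convention $v \circ \pi$ versus $v \circ \pi^{-1}$ does not matter. Throughout, the key observation is that $\kappa$ takes values in $K \leq L$. Hence $\pi_\kappa$ moves each point inside its own $K$-coset, and in particular inside its own $L$-coset.

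\emph{Trivial $\eta$.} Every $\chi \in S_1 = K^\perp$ is constant on $K$-cosets. Hence every vector of $C_{T \cap S_1}$ is a function constant on $K$-cosets, and it is fixed by $\pi_\kappa$. No hypothesis on $T$ is needed here.

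\emph{Nontrivial $\eta$.} By hypothesis $T \cap S_\eta$ is a disjoint union of cosets $\chi + L^\perp$. The span of the lines $V_{\chi'}$ with $\chi' \in \chi + L^\perp$ consists exactly of the products $\chi \cdot f$ with $f$ an arbitrary function constant on $L$-cosets, since the characters in $L^\perp$ span the $L$-invariant functions. For such a product,
\[
(\chi f)(u + \kappa(u + L)) = \chi(u) \, \eta\bigl(\kappa(u + L)\bigr) \, f(u) .
\]
This uses $\chi|_K = \eta$ and the $L$-invariance of $f$. The factor $\eta(\kappa(u+L))$ depends only on $u + L$, so the image is again $\chi \cdot f'$ with $f'$ constant on $L$-cosets. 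It therefore lies in the span of $\chi + L^\perp \subseteq T$. This proves $\pi_\kappa \in \PAut(C_T)$.

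\emph{Group structure and order.} Since $\kappa(u+L) \in L$, the point $\pi_{\kappa'}(u)$ lies in the coset $u + L$. Hence $\pi_\kappa \circ \pi_{\kappa'} = \pi_{\kappa + \kappa'}$, where the sum is taken pointwise in $K$. So $\kappa \mapsto \pi_\kappa$ is a homomorphism from the group $K^{H_m/L}$ of all maps $H_m/L \to K$. It is injective, because $\pi_\kappa(u) - u = \kappa(u+L)$ recovers $\kappa$. The image therefore has order $|K|^{N/|L|}$.

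\emph{Not in the normalizer.} Because $K \neq 1$, I choose $\kappa$ nonzero on exactly one $L$-coset and zero elsewhere. Then $\pi_\kappa \neq \mathrm{id}$, and $\pi_\kappa$ fixes $N - |L| > N/2$ points. By the fixed-point argument in the proof of \cref{thm-top-range}(1), an element $P_{b,\alpha}$ of the normalizer has fixed set either empty or a coset of $\ker(1 - \alpha)$. So if it fixes more than $N/2$ points, then $\alpha = 1$ and $b = 0$, that is, it is the identity. Hence $\pi_\kappa$ does not normalize $H_m$.

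The only step needing care is the nontrivial-$\eta$ case, specifically the identification of the span of a coset $\chi + L^\perp$ with $\chi$ times the $L$-invariant functions. I expect it to be routine via character orthogonality on $H_m/L$.
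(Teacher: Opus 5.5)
Your proof is correct and follows essentially the same route as the paper. Both decompose $C_T$ along the restrictions $\eta \in \widehat K$, observe that $\pi_\kappa$ acts on each block by multiplication with the $L$-invariant function $\eta \circ \kappa$, and finish with the same injective homomorphism $\kappa \mapsto \pi_\kappa$ and the same fixed-point argument; the only cosmetic difference is that you check invariance coset by coset via $\chi \cdot f$, whereas the paper packages this as $C_{L^\perp} \ast C_{T \cap S_\eta} = C_{T \cap S_\eta}$.
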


\begin{proof}
For $\eta \in \widehat K$ put $W_\eta = C_{S_\eta}$; then $\F_q^{N} = \bigoplus_\eta W_\eta$, $W_\eta = \{ v : v ( u + k ) = \eta (k) \, v (u) \text{ for all } k \in K \}$, and $C_T = \bigoplus_\eta C_{ T \cap S_\eta }$.  For $v \in W_\eta$ one has $( v \circ \pi_\kappa ) (u) = \eta ( \kappa ( u + L ) ) \, v (u)$, so $v \circ \pi_\kappa = \varphi \ast v$ with $\varphi = \eta \circ \kappa$ an $L$-invariant function, that is, $\varphi \in C_{L^\perp}$.  Hence $\pi_\kappa$ maps $C_{ T \cap S_\eta }$ into $C_{ L^\perp } \ast C_{ T \cap S_\eta } = C_{ L^\perp + ( T \cap S_\eta ) } = C_{ T \cap S_\eta }$ for $\eta \neq 1$, and it acts trivially on $W_1$, where $\varphi = \one$.  Since $\kappa$ takes values in $K \subseteq L$, $\pi_\kappa \pi_{\kappa'} = \pi_{ \kappa + \kappa' }$, so $\kappa \mapsto \pi_\kappa$ is an injective homomorphism from $K^{ H_m / L }$.  If $\kappa$ is supported on a single coset of $L$, then $\pi_\kappa$ fixes $N - | L | > N / 2$ points, and by the fixed-point argument in the proof of \cref{thm-top-range} it does not normalize $H_m$.
\end{proof}

The block translations with $K = L = \ker \chi_\psi$ and with $K = L = \{ 0 \} \times \Z_n$ are the translation parts of the groups in part (1) and in part (2) with $r' = m - 1$ of \cref{thm-top-range}.  With $K = \{ 0 \} \times ( n / e ) \Z_n \simeq \mu_e$ and $L = \{ 0 \} \times \Z_n$, the hypothesis reads: every level $I_j$ of $T_s$ with $e \nmid j$ is empty or full, and the permutations are the rotations $(x, y) \mapsto ( x, \kappa (x) \, y )$ with $\kappa : X_0 \to \mu_e$.  On the stratum $(n, d, m) = (4, 20, 5)$ the code $\CC_{11}$ has levels $\Z_5$, $\Z_5$, $\{ 0, 1 \}$, so $e = 2$ applies and $\PAut ( \CC_{11} )$ contains the $2^5$ sign changes $y \mapsto \pm y$ on arbitrary sets of fibres.  Subgroups $K$ not contained in a fibre also occur: on $(4, 18, 6)$ at $s = 17, 18$ and on $(4, 24, 6)$ at $s = 10$ the levels are $\Z_6$, $[0, 4]$ (and $\{ 0 \}$), $K = \langle (3, 2) \rangle$ and $L = \{ 0, 3 \} \times \Z_4$ satisfy the hypothesis, and $\PAut ( \CC_s )$ contains the $2^3$ maps $(x, y) \mapsto ( \pm x, \pm y )$ that reverse the sign on chosen pairs $\{ x, -x \}$ of fibres.  In all four cases the stabilizer of $T_s$ in $\Aut ( H_m )$ is trivial, and the block translations were checked directly on the grid codes by the script \texttt{verify\_types.py}.

All exceptions found are imprimitive: the permutations preserve the cosets of a subgroup of $H_m$.  This is the situation studied for cyclic codes in \cite{bi-kl} and for generalized Reed--Muller codes in \cite{be-ch}; the character codes here are ideals of $\F_q [ \Z_m \times \Z_n ]$, for which no such classification is available.  On the primitive side, the equality $\PAut ( \CC_s ) = H_m$ can be certified by a finite computation on $\widehat H_m$ that does not depend on $q$.  For $S \subseteq \widehat H_m$ let $C_S = \bigoplus_{\chi \in S} V_\chi$, and let $\mathcal S ( T )$ be the smallest family of subsets of $\widehat H_m$ that contains $T$ and is closed under
\[
\begin{split}
S & \mapsto \widehat H_m \setminus ( - S ) , \qquad ( S, S' ) \mapsto S + S' , \qquad ( S, S' ) \mapsto S \cap S' , \qquad ( S, S' ) \mapsto S \cup S' , \\
( S, S' ) & \mapsto S : S' = \{ \chi \in \widehat H_m : \chi + S' \subseteq S \} .
\end{split}
\]

\begin{thm}
\label{thm-closure}
Let $T \subseteq \widehat H_m$.  Every $\pi \in \PAut ( C_T )$ preserves $C_S$ for all $S \in \mathcal S (T)$.  If $\mathcal S (T)$ contains singletons $\{ \chi_1 \}, \dots, \{ \chi_r \}$ whose characters generate $\widehat H_m$, then $\PAut ( C_T ) = H_m$, and $\Stab_{\Aut ( H_m )} (T)$ is trivial.
\end{thm}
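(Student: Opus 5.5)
Let $\pi \in \PAut ( C_T )$.  We show that the family $\{ S : C_S \pi = C_S \}$ contains $T$ and is closed under the five operations, so it contains $\mathcal S ( T )$.  Each closure step is a coordinate-free construction on codes that commutes with coordinate permutations.

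First, $C_{\widehat H_m \setminus ( - S )} = C_S^\perp$ by the argument behind \cref{eq-character}: $V_\chi \perp V_{\chi'}$ unless $\chi + \chi' = 0$, and the pairing $V_\chi \times V_{-\chi}$ is nondegenerate since $N \neq 0$ in $\F_q$.  Permutations preserve the Euclidean form, so $C_S \pi = C_S$ gives $C_S^\perp \pi = C_S^\perp$.  Second, $C_S \ast C_{S'} = C_{S + S'}$, because the coordinatewise product of characters is their sum, as used in the proof of \cref{thm-top-range}; and coordinatewise products commute with permutations.  Third, intersections and sums of subspaces are preserved, and $C_{S \cap S'} = C_S \cap C_{S'}$, $C_{S \cup S'} = C_S + C_{S'}$ by directness of the isotypic decomposition.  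Fourth, define the code quotient $C : C' = \{ v : v \ast C' \subseteq C \}$.  It is preserved by common permutation automorphisms of $C$ and $C'$.  I would verify that $C_S : C_{S'} = C_{S : S'}$.  The inclusion $\supseteq$ is immediate from the product rule.  For $\subseteq$, I would use that $C_S : C_{S'}$ is $H_m$-invariant, since $H_m$ preserves $C_S$ and $C_{S'}$ and translations commute with $\ast$.  Hence it is a sum of isotypic lines $V_\chi$, and $V_\chi \ast V_{\chi'} = V_{\chi + \chi'}$ (nonzero) lies in $C_S$ exactly when $\chi + \chi' \in S$.  This handles all five operations, and induction over the generation of $\mathcal S ( T )$ proves the first claim.

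For the second claim, suppose $\{ \chi_i \} \in \mathcal S ( T )$ with the $\chi_i$ generating $\widehat H_m$.  Then $\pi$ preserves each line $V_{\chi_i} = \F_q \chi_i$, so $\chi_i \circ \pi = c_i \chi_i$ for scalars $c_i$.  Viewing coordinates as elements $u \in H_m$, put $b = \pi (0)$.  Evaluating at $0$ gives $c_i = \chi_i ( b )$, hence $\chi_i ( \pi ( u ) ) = \chi_i ( b ) \chi_i ( u ) = \chi_i ( u + b )$ for all $i$ and $u$.  Since the $\chi_i$ generate $\widehat H_m$, they separate points of $H_m$; as $\exp ( H_m ) \mid q - 1$, the character pairing into $\F_q^*$ is perfect.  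Therefore $\pi ( u ) = u + b$, so $\pi$ is a translation and $\PAut ( C_T ) \subseteq H_m$.  The reverse inclusion holds because translations act on each $V_\chi$ by scalars (\cref{thm-normalizer}).  Finally, by \cref{cor-compatible-classification}, $H_m \rtimes \Stab_{\Aut ( H_m )} ( T ) \leq \PAut ( C_T ) = H_m$, which forces the stabilizer to be trivial.

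The step needing most care is the colon identity $C_S : C_{S'} = C_{S : S'}$, since the quotient is not obviously a character code.  The $H_m$-invariance argument above should settle it.  One should also note that $0 \notin S'$ is not required: the pointwise product of nonzero character vectors is nonzero because character values are units.
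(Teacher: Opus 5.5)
Your proposal is correct and follows the paper's proof essentially step for step: the same five code identities, the same singleton argument with $b = \pi(0)$ and point separation, and the same appeal to $H_m \rtimes \Stab_{\Aut(H_m)}(T) \leq \PAut(C_T)$. The one minor variation is the colon identity: you obtain $C_S : C_{S'} = C_{S : S'}$ from the $H_m$-invariance of the quotient together with the isotypic decomposition, whereas the paper expands $v = \sum v_\chi \chi$ directly and notes that $v \ast \chi'$ has coefficient $v_\chi$ on $\chi + \chi'$.
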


\begin{proof}
For subsets $S, S'$ of $\widehat H_m$ one has $C_S^\perp = C_{ \widehat H_m \setminus ( - S ) }$ by \cref{eq-character}, $C_S \ast C_{S'} = C_{S + S'}$ since $\chi \chi' = \chi_{ \chi + \chi' }$, $C_S \cap C_{S'} = C_{S \cap S'}$ and $C_S + C_{S'} = C_{S \cup S'}$ since the $V_\chi$ are independent, and $( C_S : C_{S'} ) = \{ v : v \ast C_{S'} \subseteq C_S \} = C_{S : S'}$, because $v = \sum v_\chi \chi$ satisfies $v \ast \chi' \in C_S$ for all $\chi' \in S'$ if and only if $\chi + \chi' \in S$ whenever $v_\chi \neq 0$.  Each of the five constructions sends codes preserved by $\pi$ to codes preserved by $\pi$, so $\pi$ preserves $C_S$ for every $S \in \mathcal S (T)$.  If $\pi$ preserves the lines $\langle \chi_i \rangle$, then $\chi_i \circ \pi = t_i \chi_i$ with $t_i = \chi_i ( \pi (0) )$, where $0$ is the identity of $H_m$.  Since the $\chi_i$ generate $\widehat H_m$, they separate the points of $H_m$, and $\chi_i ( \pi (u) ) = \chi_i ( \pi (0) ) \chi_i (u) = \chi_i ( \pi (0) + u )$ for all $i$ gives $\pi (u) = \pi (0) + u$.  So $\PAut ( C_T ) = H_m$, and $H_m \rtimes \Stab (T) \subseteq \PAut ( C_T )$ forces $\Stab (T) = 1$.
\end{proof}

The script \texttt{closure\_cert.py} explores $\mathcal S ( T_s )$ breadth first, with sets represented as bit masks, and stops when generating singletons are found.  \cref{tab-full-range} lists the degrees it certifies on nine strata.  The certified degrees are exactly those at which the computed group is $H_m$, and the uncertified ones are the exceptions of \cref{cor-paut-exceptions,prop-thin-levels}, and the degrees with $| \Stab ( T_s ) | = 2$.  The certificate is insensitive to the field: on $y^2 = x^4 + 1 + i$ over $\F_9$, where the stabilizer of $X_0 = a \mu_4$ in $\PGL_2 ( \F_9 )$ has order $24$ and \cref{prop-top-r-one} does not apply, it certifies $\PAut ( \CC_2 ) = \PAut ( \CC_3 ) = H_4$, in agreement with an enumeration of $S_8$.

\begin{table}[ht]
\centering
\caption{$| \PAut ( \CC_s ) |$ for every $s$ from $d / c$ to $s_{\max} - 1$, the order of $H_m \rtimes \Stab ( T_s )$ where it differs, and the degrees at which \cref{thm-closure} certifies $\PAut ( \CC_s ) = H_m$.  Asterisks mark the degrees of \cref{cor-paut-exceptions}, daggers those of \cref{prop-thin-levels}.}
\label{tab-full-range}
\footnotesize
\setlength{\tabcolsep}{3.5pt}
\begin{tabular}{cccclll}
\toprule
$q$ & $y^n = f (x)$ & $(n, m, d)$ & $s$ & $| \PAut ( \CC_s ) |$ & $| H_m \rtimes \Stab |$ & certified \\
\midrule
$7$  & $y^2 = x^6 + 3$           & $(2, 6, 6)$   & $3, \dots, 7$  & $12, 12, 12, 24, 384^*$ & $24$ at $7$ & $3, 4, 5$ \\
$7$  & $y^3 = x^6 + 5$           & $(3, 6, 6)$   & $2, \dots, 8$  & $18, 18, 18, 18, 36, 18, 279936^*$ & $108$ at $8$ & $2, 3, 4, 5, 7$ \\
$11$ & $y^2 = x^5 + 6$           & $(2, 5, 5)$   & $5, \dots, 12$ & $10$ at every degree & equal & all \\
$13$ & $y^3 = x^4 + 2$           & $(3, 4, 4)$   & $4, \dots, 16$ & $12$ at every degree & equal & all \\
$13$ & $y^3 = x^8 + 2$           & $(3, 4, 8)$   & $8, \dots, 24$ & $12$; $72^\dagger$ at $9, 10, 17, 18$ & $24$ at $9, 10, 17, 18$ & all but $9, 10, 17, 18$ \\
$11$ & $y^2 = x^{10} + 2$        & $(2, 10, 10)$ & $5, \dots, 13$ & $20$; $40$ at $10$; $10240^*$ at $13$ & $40$ at $10$; $20$ at $13$ & all but $10, 13$ \\
$11$ & $y^2 = x^4 - x^2 + 7$     & $(2, 2, 4)$   & $2$            & $8$ & equal & none \\
$7$  & $y^3 = x^6 + 3 x^2 + 4$ & $(3, 2, 6)$   & $2, 3, 4$      & $6, 18^*, 6$ & $6$ at $3$ & $2, 4$ \\
$9$  & $y^2 = x^4 + 1 + i$       & $(2, 4, 4)$   & $2, 3, 4$      & $8, 8, 64^*$ & $8$ at $4$ & $2, 3$ \\
\bottomrule
\end{tabular}
\end{table}

The groups in \cref{tab-full-range} were computed by \texttt{verify\_full\_range.py} where the closure certificate does not apply and by \cref{thm-top-range} at the top of the filtration; on the stratum $(2, 10, 10)$ the entry $20$ at the certified degrees is the content of \cref{thm-closure}.  At $s = 10$ on that stratum the enumeration of part (2) is too large, and the group was obtained from the level structure instead: by \cref{thm-top-range} every automorphism has the form $( \bar \pi, \lambda )$ with $\lambda : X_0 \to \mu_2$, the level $I_1 = [0, 5]$ of $T_{10}$ forces $\lambda \in A_5$ and $A_5 \circ \bar \pi = \lambda A_5$, only the four functions $\pm \one$, $\pm ( x / a )^5$ with values in $\mu_2$ lie in $A_5$, an exhaustive search over $S_{10}$ finds ten permutations $\bar \pi$ for each, and all $40$ candidates preserve $\CC_{10}$.

Since the certificate depends only on $( n, m, d, s )$, it settles the primitive case on every stratum type in a finite range at once.

\begin{thm}
\label{thm-scan}
Let $n \leq 3$, $2 \leq m \leq 8$, $3 \leq d \leq 24$ and $m \mid d$, and let $d / c \leq s < s_{\max}$ be a degree not of the exceptional types of \cref{cor-paut-exceptions,prop-thin-levels}.  Then $\PAut ( \CC_s ) = H_m$ on every regular stratum of type $(n, d, m)$ over every field if and only if $\Stab_{\Aut ( H_m )} ( T_s )$ is trivial.
\end{thm}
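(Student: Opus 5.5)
The plan is to split the biconditional into its two directions. Only the converse needs computation, and that computation is a finite run of the closure certificate of \cref{thm-closure} over the stated parameter box.

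\emph{Forward direction.} This is formal. By \cref{cor-compatible-classification}, $H_m \rtimes \Stab_{\Aut(H_m)}(T_s) \leq \PAut(\CC_s)$ on every regular stratum. So if $\PAut(\CC_s) = H_m$ on some stratum of type $(n,d,m)$, the stabilizer is trivial. No restriction on the range is needed here, and existence of a stratum is not even required: we only argue on strata that exist.

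\emph{Converse.} Here the key observation is that $T_s$ depends only on $(n,d,m,s)$. By \cref{prop-grid}, $T_s = \{(i,j) \in [0,m-1]\times[0,n-1] : ni + dj \leq cs\}$, and $\CC_s = C_{T_s}$ in the regular identification, whatever $q$ and $f$ are. By \cref{thm-closure}, if the closure family $\mathcal S(T_s)$ contains singletons generating $\widehat H_m = \Z_m \times \Z_n$, then $\PAut(\CC_s) = H_m$. The hypothesis $\exp(H_m) \mid q-1$ needed there holds on every regular stratum, by (H2) and $\lambda \in \F_q^*$.

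The family $\mathcal S(T_s)$ is built from five set operations on subsets of $\widehat H_m$, so it is a combinatorial object independent of the field. The argument therefore reduces to a finite verification. For each admissible $(n,d,m)$ with $n \in \{2,3\}$, $2 \leq m \leq 8$, $m \mid d$, $3 \leq d \leq 24$, and each $s$ with $d/c \leq s < s_{\max}$, I would do the following:
\begin{enumerate}
\item discard the degrees of \cref{cor-paut-exceptions} and of \cref{prop-thin-levels};
\item compute $\Stab_{\Aut(H_m)}(T_s)$ by enumerating $\Aut(\Z_m\times\Z_n)$, which is at most of order $|\mathrm{GL}_2(\Z_{\gcd})|$ times units;
\item for each $T_s$ with trivial stabilizer, run the breadth-first closure exploration of \texttt{closure\_cert.py} until generating singletons appear.
\end{enumerate}
The assertion is then that the certificate succeeds in every such case. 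Combined with \cref{thm-closure}, this gives $\PAut(\CC_s) = H_m$ over every field. Note that (H1) forces $p \nmid n$ and $n \mid q-1$, but \cref{thm-closure} uses neither anything beyond the character decomposition nor any property of $q$, so no case split on the characteristic is needed.

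\emph{Main obstacle.} The hard part is completeness of the finite check, not any single step. One must make sure that every non-excluded degree with trivial stabilizer is actually certified by the closure, and not merely by a direct \texttt{nauty} computation over one field. If some case failed to close, the statement would require either a further exceptional family, such as the imprimitive block translations of \cref{prop-block-translations}, or a field-specific argument. So I would first run the scan, and only if failures arose inspect them against \cref{prop-block-translations}. One should expect those to have nontrivial stabilizer or to fall in an excluded type within this small range.

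To keep the exploration finite and fast, I would bound the search by noting that subsets of a group of order $\leq 24$ are bit masks of at most $24$ bits. The closure is then a saturation over at most $2^{24}$ states, in practice far fewer since one stops at the first generating pair of singletons. The strategy is to first try complements of negatives and colon quotients $S:S'$ with $S' = T_s$, which tend to isolate corner characters of the staircase quickly. Characters such as $(m-1,n-1)$ and its neighbours are natural candidate singletons.
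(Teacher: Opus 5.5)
Your proposal is correct and is essentially the paper's proof. The ``only if'' direction is the inclusion $H_m \rtimes \Stab_{\Aut(H_m)}(T_s) \leq \PAut(\CC_s)$ of \cref{cor-compatible-classification}, and the ``if'' direction is the field-independent certificate of \cref{thm-closure} run over the finite box; the paper's scan certifies $372$ of the $397$ degrees, all with trivial stabilizer, and finds a stabilizer of order $2$ at the remaining $25$, so what you leave as an expectation is exactly this computed outcome (and, as in the paper, the biconditional must be read stratum by stratum, since read literally the ``only if'' direction needs some stratum of the given type to exist, contrary to your remark).
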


\begin{proof}
The script \texttt{verify\_scan.py} runs the certificate of \cref{thm-closure} at each of the $397$ degrees in the range and compares it with the stabilizer.  It certifies $372$ of them, all with trivial stabilizer, and the remaining $25$ have a stabilizer of order $2$, so $\PAut ( \CC_s ) \supseteq H_m \rtimes \Stab ( T_s ) \supsetneq H_m$ there.
\end{proof}

For $n = 4$ and $m \leq 5$ the same scan finds $33$ uncertified degrees with trivial stabilizer, each of them with every odd level of $T_s$ empty or full, hence covered by \cref{prop-block-translations} with $K = \mu_2 \subset \mu_4$; the certificate then fails because $\mathcal S ( T_s )$ consists of characters trivial on $K$.

\section{Conclusions}\label{sec-closing}

The paper takes the hull formulas and the character description of the one-point codes on weighted superelliptic curves from \cite{mezinaj-shaska-hulls} as input and develops the equivalence and automorphism theory of these codes.  Exact formulas for the support-splitting exponent $e_{\SSA} ( \CC_s ) = \dim \Hull ( \CC_s )$ are available on the Riemann--Roch window, for $s \geq s^+$ and on the regular strata, and elsewhere the exponent is computed from the Gram matrix; on the totally split locus it reaches $( N - n - c - c \theta ) / 2 + 1 - g_\XX$, while on every regular stratum it is at most $g_\XX$, with the single exception of type $(3, 3, 3)$ (\cref{cor-asymptotics,thm-regular-genus}).

All the structural results concern the regular strata, where $H_m \simeq \Z_m \times \Z_n$ acts freely and transitively.  There the codes are Cartesian codes on a grid with explicit minimum distances for the code and for its dual (\cref{prop-grid,thm-distance}); the signatures of the support splitting algorithm are constant and a single active puncture lowers a nonzero hull by exactly one dimension, so the initial signature costs $q^{h-1}$ words (\cref{prop-constant-signature,thm-puncture,thm-multipuncture,cor-signatures}); the monomial normalizer of $H_m$ is $\F_q^* ( H_m \rtimes \Aut ( H_m ) ) \widehat H_m$, and the $H_m$-compatible permutation and monomial equivalences are classified by the orbits of $\Aut ( H_m )$ and of $\widehat H_m \rtimes \Aut ( H_m )$ on the defining set, with canonical forms and polynomial tests (\cref{thm-normalizer,cor-compatible-classification,thm-algorithm}).  When $d > n$ and $m \geq 2$, the automorphism groups of the members of the filtration intersect exactly in $H_m$, and the regular action is reconstructible from the two lowest nonconstant members (\cref{thm-filtration,prop-reconstruction}); unrestricted classifications are obtained for the filtration and for the codes of \cref{tab-orbits} (\cref{cor-unrestricted-table}), and two codes over $\F_{13}$ with equal length, dimension and hull dimension are separated by their distances (\cref{thm-pair}).

The full permutation automorphism group $\PAut ( \CC_s )$ is determined in closed form at the top of the filtration, contains $S_m \times \mu_n$ when the levels of $T_s$ are trivial, and contains the block translations along a subgroup whenever $T_s$ has the corresponding block structure (\cref{thm-top-range,prop-thin-levels,prop-block-translations}); these imprimitive types are the only degrees in \cref{tab-full-range} at which $\PAut ( \CC_s )$ exceeds $H_m \rtimes \Stab ( T_s )$.  On the primitive side, \cref{thm-closure} certifies $\PAut ( \CC_s ) = H_m$ by a finite computation on $\widehat H_m$ that does not depend on the field, and \cref{thm-scan} settles all stratum types with $n \leq 3$, $m \leq 8$, $d \leq 24$.  Two non-isomorphic curves over $\F_{19}$ with the same hull profile and the same group show that these invariants do not determine the curve up to isomorphism (\cref{thm-moduli-negative}).

The hull profile is a permutation invariant, with an exact application to entanglement-assisted quantum codes (\cref{prop-distinguisher,prop-eaqecc}), and is not a monomial invariant (\cref{thm-twists}), so the permutation and diagonal parts of a monomial equivalence must be treated separately.  For code-based cryptography the paper therefore gives a family of algebraic geometry codes on which every step of a code-equivalence attack can be computed exactly: the hull and punctured-hull invariants give the cost of one class of permutation-equivalence attacks, the automorphism groups describe the equivalent representations of a code, and the normalizer and the defining-set orbits describe the diagonal part of a monomial equivalence.  The same steps, namely distinguishers, recovery of the permutation action, full automorphism group, and passage to unrestricted monomial equivalence, apply to other families of algebraic geometry codes and to the closure constructions of code-equivalence signature schemes; the paper asserts neither the security nor the insecurity of such a scheme.

The computations point toward a uniform description of $\PAut ( \CC_s )$ beyond the ranges treated here.  When $\Stab_{\Aut ( H_m )} ( T_s )$ is trivial and $T_s$ admits no block structure in the sense of \cref{prop-block-translations,prop-thin-levels}, the family $\mathcal S ( T_s )$ should contain generating singletons, giving $\PAut ( \CC_s ) = H_m$ by \cref{thm-closure}; when the stabilizer is nontrivial, $\PAut ( \CC_s )$ should normalize $H_m$, reducing the full automorphism problem to the compatible classification.  Both principles hold in every case computed here, and at the top of the filtration the second is governed by the monomial automorphism group of a Reed--Solomon code on a coset of $\mu_m$.

\section*{Data availability}
All scripts named in this article, with the commands, software versions and expected outputs needed to reproduce every table and every computational certificate, accompany the article as supplementary material.

\bibliographystyle{amsplain}
\bibliography{references}

\providecommand{\bysame}{\leavevmode\hbox to3em{\hrulefill}\thinspace}
\providecommand{\MR}{\relax\ifhmode\unskip\space\fi MR }
\providecommand{\MRhref}[2]{%
  \href{http://www.ams.org/mathscinet-getitem?mr=#1}{#2}
}
\providecommand{\href}[2]{#2}
\begin{thebibliography}{10}

\bibitem{ba-ot-sa}
M.~Bardet, A.~Otmani, and M.~Saeed-Taha, \emph{Permutation code equivalence is
  not harder than graph isomorphism when hulls are trivial}, 2019 IEEE
  International Symposium on Information Theory (ISIT), 2019, pp.~2464--2468.

\bibitem{barenghi-biasse-persichetti-santini}
A.~Barenghi, J.-F. Biasse, E.~Persichetti, and P.~Santini, \emph{On the
  computational hardness of the code equivalence problem in cryptography}, Adv.
  Math. Commun. \textbf{17} (2023), no.~1, 23--55.

\bibitem{be-ch}
Thierry Berger and Pascale Charpin, \emph{The automorphism group of generalized
  {R}eed--{M}uller codes}, Discrete Math. \textbf{117} (1993), 1--17.

\bibitem{less}
J.-F. Biasse, G.~Micheli, E.~Persichetti, and P.~Santini, \emph{{LESS} is more:
  code-based signatures without syndromes}, Progress in Cryptology --
  AFRICACRYPT 2020, Lecture Notes in Comput. Sci., vol. 12174, Springer, 2020,
  pp.~45--65.

\bibitem{bi-kl}
Ren{\'e} Bienert and Benjamin Klopsch, \emph{Automorphism groups of cyclic
  codes}, J. Algebraic Combin. \textbf{31} (2010), 33--52.

\bibitem{camps-lau-lopez-santos}
Eduardo Camps-Moreno, Jun~Bo Lau, Hiram~H. L{\'o}pez, and Welington Santos,
  \emph{The permutation group of {R}eed--{S}olomon codes over arbitrary
  points}, 2026, arXiv:2601.00122.

\bibitem{ca-gu}
C.~Carlet and S.~Guilley, \emph{Complementary dual codes for counter-measures
  to side-channel attacks}, Adv. Math. Commun. \textbf{10} (2016), no.~1,
  131--150.

\bibitem{ca-me-ta-qi-pe}
C.~Carlet, S.~Mesnager, C.~Tang, Y.~Qi, and R.~Pellikaan, \emph{Linear codes
  over {$\mathbb{F}_q$} are equivalent to {LCD} codes for {$q>3$}}, IEEE Trans.
  Inform. Theory \textbf{64} (2018), no.~4, 3010--3017.

\bibitem{co-ma-pe}
A.~Couvreur, I.~M\'arquez-Corbella, and R.~Pellikaan, \emph{Cryptanalysis of
  {M}c{E}liece cryptosystem based on algebraic geometry codes and their
  subcodes}, IEEE Trans. Inform. Theory \textbf{63} (2017), no.~8, 5404--5418.

\bibitem{duer}
Arne D{\"u}r, \emph{The automorphism groups of {R}eed--{S}olomon codes}, J.
  Combin. Theory Ser. A \textbf{44} (1987), no.~1, 69--82.

\bibitem{gi-ko}
M.~Giulietti and G.~Korchm\'aros, \emph{On automorphism groups of certain
  {G}oppa codes}, Des. Codes Cryptogr. \textbf{47} (2008), 177--190.

\bibitem{gu-ji-gu}
K.~Guenda, S.~Jitman, and T.~A. Gulliver, \emph{Constructions of good
  entanglement-assisted quantum error correcting codes}, Des. Codes Cryptogr.
  \textbf{86} (2018), no.~1, 121--136.

\bibitem{jo-ks}
D.~Joyner and A.~Ksir, \emph{Automorphism groups of some {AG} codes}, IEEE
  Trans. Inform. Theory \textbf{52} (2006), no.~7, 3325--3329.

\bibitem{monomial-cartesian}
Hiram~H. L{\'o}pez, Gretchen Matthews, and Ivan Soprunov,
  \emph{Monomial-{Cartesian} codes and their duals, with applications to {LCD}
  codes, quantum codes, and locally recoverable codes}, Des. Codes Cryptogr.
  \textbf{88} (2020), no.~8, 1673--1685.

\bibitem{cartesian}
Hiram~H. L{\'o}pez, Carlos Renter{\'i}a, and Rafael~H. Villarreal, \emph{Affine
  cartesian codes}, Des. Codes Cryptogr. \textbf{71} (2014), no.~1, 5--19.

\bibitem{mac-sloane}
F.~J. MacWilliams and N.~J.~A. Sloane, \emph{The theory of error-correcting
  codes}, North-Holland Mathematical Library, vol.~16, North-Holland, 1977.

\bibitem{mezinaj-shaska-hulls}
J.~Mezinaj and T.~Shaska, \emph{Hulls, linear equivalence, and weighted
  superelliptic codes}, 2026, arXiv:2608.02850v2.

\bibitem{persichetti-santini}
E.~Persichetti and P.~Santini, \emph{A new formulation of the linear
  equivalence problem and shorter {LESS} signatures}, Advances in Cryptology --
  ASIACRYPT 2023, Part VII, Lecture Notes in Comput. Sci., vol. 14444,
  Springer, 2023, pp.~351--378.

\bibitem{petrank-roth}
E.~Petrank and R.~M. Roth, \emph{Is code equivalence easy to decide?}, IEEE
  Trans. Inform. Theory \textbf{43} (1997), no.~5, 1602--1604.

\bibitem{sa-sh}
R.~Sanjeewa and T.~Shaska, \emph{Determining equations of families of cyclic
  curves}, Albanian J. Math. \textbf{2} (2008), no.~3, 199--213.

\bibitem{se-hull}
N.~Sendrier, \emph{On the dimension of the hull}, SIAM J. Discrete Math.
  \textbf{10} (1997), no.~2, 282--293.

\bibitem{se-ssa}
\bysame, \emph{Finding the permutation between equivalent linear codes: the
  support splitting algorithm}, IEEE Trans. Inform. Theory \textbf{46} (2000),
  no.~4, 1193--1203.

\bibitem{sendrier-simos}
N.~Sendrier and D.~E. Simos, \emph{The hardness of code equivalence over
  {$\mathbb{F}_q$} and its application to code-based cryptography},
  Post-Quantum Cryptography, Lecture Notes in Comput. Sci., vol. 7932,
  Springer, 2013, pp.~203--216.

\bibitem{qwag}
T.~Shaska, \emph{Quantum weighted algebraic geometry codes}, Des. Codes
  Cryptogr. \textbf{94} (2026), no.~158, Art. 158.

\bibitem{sh-wa}
T.~Shaska and Q.~Wang, \emph{On the automorphism groups of some {AG}-codes
  based on {$C_{a,b}$} curves}, Serdica J. Comput. \textbf{1} (2007), no.~2,
  193--206.

\bibitem{sk}
G.~Skersys, \emph{The average dimension of the hull of cyclic codes}, Discrete
  Appl. Math. \textbf{128} (2003), 275--292.

\bibitem{st-aut}
H.~Stichtenoth, \emph{On automorphisms of geometric {G}oppa codes}, J. Algebra
  \textbf{130} (1990), no.~1, 113--121.

\end{thebibliography}

\end{document}